\documentclass[11pt,a4paper]{article}

\usepackage[margin=2.5cm]{geometry}
\usepackage{amsmath,amssymb,amsthm,mathtools}
\usepackage[T1]{fontenc}
\usepackage{lmodern}
\usepackage{microtype}
\usepackage{enumitem}
\usepackage[bookmarks=false,colorlinks=true,linkcolor=blue,citecolor=blue,urlcolor=blue]{hyperref}

\theoremstyle{plain}
\newtheorem{theorem}{Theorem}[section]
\newtheorem{proposition}[theorem]{Proposition}
\newtheorem{lemma}[theorem]{Lemma}
\newtheorem{corollary}[theorem]{Corollary}
\theoremstyle{definition}
\newtheorem{definition}[theorem]{Definition}
\newtheorem{example}[theorem]{Example}
\theoremstyle{remark}
\newtheorem{remark}[theorem]{Remark}

\newcommand{\R}{\mathbb{R}}
\newcommand{\Ric}{\mathrm{Ric}}
\newcommand{\Rie}{\mathrm{Rie}}
\newcommand{\Sec}{\mathrm{Sec}}
\newcommand{\Lie}{\mathcal{L}}
\DeclareRobustCommand{\tn}{\widetilde{\nabla}}
\DeclareRobustCommand{\tg}{\widetilde{g}}
\DeclareRobustCommand{\tN}{\widetilde{N}}
\newcommand{\Gam}{\Gamma}
\newcommand{\grad}{\operatorname{grad}}
\newcommand{\Hess}{\operatorname{Hess}}
\newcommand{\tr}{\operatorname{tr}}
\newcommand{\vt}{v^{\top}}
\newcommand{\vn}{v^{\perp}}
\newcommand{\dv}{\operatorname{div}}
\newcommand{\scal}{\operatorname{scal}}
\newcommand{\vol}{\operatorname{vol}}
\newcommand{\Tor}{\operatorname{Tor}}

\title{\textbf{Solitons on Statistical Manifolds and Their Submanifolds}}

\author{Aliya Naaz Siddiqui$^{1}$ and Bang-Yen Chen$^{2*}$\\
\small $^{1}$Division of Mathematics, School of Basic Sciences, Galgotias University,\\
\small Greater Noida 203201, Uttar Pradesh, India\\
\small E-mail: \ \texttt{aliya.siddiqui@galgotiasuniveristy.edu.in}\\
\small $^{2*}$Department of Mathematics, College of Natural Science, Michigan State University,\\
\small 619 Red Cedar Road, East Lansing, Michigan 48824, USA\\
\small E-mail: \  \texttt{chenb@msu.edu}}

\begin{document}
\maketitle

\begin{abstract}
Statistical manifolds, introduced by Amari in information geometry, carry a pair of dual torsion-free connections $(\nabla,\nabla^{*})$ together with a Riemannian metric. In this chapter we survey the theory of \emph{statistical solitons}, that is, vector fields and constants that solve the Ricci-soliton equation simultaneously for $\nabla$ and $\nabla^{*}$, together with the statistical analogue of almost quasi-Yamabe solitons. We discuss two principal settings: (i) statistical warped products $\R\times_f N_2$ and $N_1\times_f\R$  for (pseudo) Riemannian manifolds $N_{1}$ and $N_{2}$, where the soliton equation constrains the warping function; and (ii) statistical submanifolds of Kenmotsu statistical manifolds, where the position of the Reeb field $\xi$ relative to the submanifold and the presence of concircular or concurrent vector fields determine the Einstein, $\eta$-Einstein or quasi-Einstein character of the submanifold. We further review the results of Blaga and Chen on gradient solitons and statistical structures, in particular the connections $\nabla^{\theta}$ defined by a $1$-form, volume formulas and bounds for the Ricci tensor. We also describe Chen-type inequalities for statistical warped product submanifolds, which give the extrinsic context in which such solitons live. Key steps of proofs, remarks on the sharpness of hypotheses, worked examples, and open problems are included.
\medskip

\noindent\textbf{Keywords:} statistical manifold; statistical soliton; almost quasi-Yamabe soliton; Kenmotsu statistical manifold; warped product; concircular vector field; Chen inequality

\noindent\textbf{MSC 2020:} 53B12, 53B25, 53B30, 53C15, 53C25, 53C44
\end{abstract}

\tableofcontents
\bigskip

\section{Introduction}

\subsection*{Information geometry and statistical manifolds}

Information geometry is an interdisciplinary field in which probability theory and statistics are studied with the tools of differential geometry. If $\Xi$ is a fixed event space and
\[
\overline{\mathbb B}=\{\mathcal P(x,\Theta)\ :\ \Theta=(\Theta_1,\dots,\Theta_n)\in\Omega\subset\R^n\}
\]
is a smooth $n$-parameter family of probability densities on $\Xi$, then the Fisher information
\[
g_{ij}(\Theta)=\int_\Xi \frac{\partial\log\mathcal P(x,\Theta)}{\partial\Theta_i}\,
\frac{\partial\log\mathcal P(x,\Theta)}{\partial\Theta_j}\ \mathcal P(x,\Theta)\,dx
\]
defines a Riemannian metric on $\overline{\mathbb B}$, so that points of the manifold are probability distributions. Together with the Fisher metric, $\overline{\mathbb B}$ carries a one-parameter family of affine connections (the $\alpha$-connections). Two connections $\nabla^{(1)}$ and $\nabla^{(-1)}$ are \emph{dual} with respect to the metric, and the abstract structure obtained in this way was formalised by Amari \cite{Amari1985} and Lauritzen \cite{Lauritzen1987} as a \emph{statistical manifold}. Affine (dually flat) geometry and the hyperbolic geometry of statistical models are closely related, which explains why curvature properties of statistical manifolds carry statistical meaning. Applications range from the ergodic hierarchy of statistical models on curved manifolds to the asymptotic behaviour of the information geometric complexity of Gaussian models in the presence of microcorrelations \cite{Ali2010,Gomez2017}.

From the viewpoint of differential geometry, a statistical manifold is a Riemannian manifold $(N,g)$ together with a torsion-free connection $\nabla$ such that $\nabla g$ is totally symmetric (the Codazzi condition); the dual connection $\nabla^{*}$ is then also torsion-free, and $\tfrac12(\nabla+\nabla^*)$ is the Levi-Civita connection. Because $\nabla$ is in general not metric, the usual Riemannian notions (curvature, sectional curvature, Ricci tensor, mean curvature of submanifolds) split into a pair of dual quantities, and one has to decide how to combine them. A convenient choice is the \emph{statistical curvature tensor} and the sectional curvature of Opozda \cite{Opozda2015,Opozda2016}. The submanifold theory of statistical manifolds, with dual second fundamental forms $h$, $h^*$ and dual shape operators, was developed by Vos \cite{Vos1989} and has since produced many curvature inequalities (Chen-type, Wintgen-type and Casorati-type) for statistical submanifolds \cite{MurathanSahin2018,Siddiqui2019}.    Additionally, statistical warped product submersions were introduced and investigated by Siddiqui et al. in \cite{Siddiqui2026}.

\subsection*{Ricci solitons and their variants}
A second source of inspiration is the theory of geometric flows. Hamilton's Ricci flow \cite{Hamilton1988} deforms a metric in the direction of its Ricci tensor, and \emph{Ricci solitons} are its self-similar solutions: a triple $(g,\zeta,\lambda)$ with
\[
\Ric+\tfrac12\Lie_\zeta g+\lambda g=0 .
\]
They generalise Einstein metrics (the case $\zeta=0$ or $\zeta$ Killing) and are classified as shrinking, steady or expanding according to the sign of $\lambda$; they model the formation of singularities of the flow and are of interest in geometric analysis, general relativity and string theory. The \emph{Yamabe flow} instead preserves the conformal class of the metric and its fixed points are metrics of constant scalar curvature; Yamabe solitons are the corresponding self-similar solutions. For $\dim=2$ the two flows coincide, while in higher dimensions they differ because the Ricci flow does not preserve the conformal class. Further generalisations, in which the soliton constant is replaced by a function and an extra term $\omega\,\eta\otimes\eta$ is included, lead to \emph{almost} and \emph{quasi}-Yamabe solitons \cite{ChenDeshmukh2018,Jub2020}, $\eta$-Ricci solitons and Ricci almost solitons.

These notions have been studied intensively on almost contact manifolds, and in particular on Kenmotsu manifolds \cite{Kenmotsu1972}, see, for instance, \cite{NagarajaPremalatha2012,Blaga2015,CalinCrasmareanu2010}. Another fruitful direction is to study solitons on \emph{submanifolds}. Here, concircular vector fields \cite{Fialkow1939}, whose integral curves are geodesics and for which $\nabla_Ev=\delta E$, play a distinguished role, as shown by Chen \cite{Chen2015} for Ricci solitons on Riemannian submanifolds: the tangential part of a concircular field of the ambient space is again concircular on a submanifold precisely under an umbilicity condition, and the soliton equation then expresses the Ricci tensor of the submanifold in terms of its second fundamental form.

\subsection*{Solitons in the statistical setting}

The soliton equation involves the Ricci tensor and the covariant derivative of the potential field, and both make sense for an arbitrary linear connection. This observation of Crasmareanu \cite{Crasmareanu2018} allows one to speak about a $\nabla$-Ricci soliton for a connection $\nabla$ different from the Levi-Civita connection. On a statistical manifold there are two natural candidates, $\nabla$ and $\nabla^*$, and demanding that the same pair $(\zeta,\lambda)$ solves the equation for both leads to the notion of a \emph{statistical soliton}. The condition is genuinely stronger than a soliton condition for a single connection, and it couples the geometry of the two dual connections. As a consequence, existence questions become rigid: obstructions appear, for instance, from the difference tensor $\mathcal T=\nabla^0-\nabla^*$ of the statistical structure (see Section~\ref{sec:xi}).  In addition, several necessary and sufficient conditions for some particular couples  $(g,\nabla)$ of pseudo-Riemannian metrics and affine connections to be statistical structures were obtained if one has gradient almost Einstein, almost Ricci, almost Yamabe solitons, or a more general type of solitons on the manifold \cite{BlagaChen2021}.

\subsection*{Warped products and Kenmotsu statistical manifolds}

Two families of manifolds provide the natural testing ground for this theory. The first consists of \emph{warped products} \cite{BishopONeill1969}. They are among the most fruitful generalisations of Riemannian products, are the standard building blocks of space-time models in general relativity \cite{Chen2011,Chen2017}, and carry a natural dualistic structure \cite{Todjihounde2006}; Einstein statistical warped products and Wintgen-type inequalities for their submanifolds have been studied in \cite{AytimurOzgur2018,MurathanSahin2018}. Since the soliton equation on a warped product $N_1\times_fN_2$ decomposes into base, fibre, and mixed components, it produces ordinary differential conditions on the warping function $f$, which makes solitons on warped products computable and geometrically transparent (Section~\ref{sec:warpsol}).

The second family consists of \emph{Kenmotsu statistical manifolds}, introduced by Furuhata, Hasegawa, Okuyama and Sato \cite{Furuhata2017} by imposing a compatible statistical structure on a Kenmotsu manifold; they are obtained as warped products of a holomorphic statistical manifold \cite{Furuhata2009} and a line. Kenmotsu geometry has applications in the mechanics of time-dependent Hamiltonian systems, geometrical optics, and thermodynamics, and its statistical counterpart is of comparable importance. The Reeb field $\xi$ gives a distinguished potential for the soliton equation, and the relative position of $\xi$ with respect to a submanifold (tangent or normal) leads to different classes of submanifolds: $\eta$-Einstein in the tangent case and Einstein in the normal case \cite{Ahmadini2024}.

\subsection*{Aim and contents of this chapter}

The purpose of this chapter is to give a unified and reasonably self-contained account of these developments. We shall explain how
\begin{itemize}[itemsep=1pt]
\item the statistical soliton equation is formulated and why it is rigid (Section~\ref{sec:solitons});
\item gradient solitons decide when Hessian or Ricci metrics, together with a suitable connection, form (nearly) statistical structures, and the connection $\nabla^{\theta}$ defined by a $1$-form leads to volume formulas and Ricci-norm bounds (Section~\ref{sec:gradient});
\item on statistical warped products $\R\times_fN_2$ and $N_1\times_f\R$ the soliton equation constrains the warping function and induces solitons on base or fibre (Section~\ref{sec:warpsol});
\item on submanifolds of Kenmotsu statistical manifolds, the position of $\xi$ and the presence of concircular or concurrent vector fields determine the Einstein, $\eta$-Einstein or quasi-Einstein character of the submanifold, and the scalar curvature relation for almost quasi-Yamabe solitons (Sections~\ref{sec:kenmotsu}--\ref{sec:yamabe});
\item Chen-type inequalities for statistical warped product submanifolds relate the intrinsic scalar curvature to the extrinsic mean curvatures and to the Laplacian of the warping function (Section~\ref{sec:chen}).
\end{itemize}
We include the key steps of the proofs, remarks on the role of the hypotheses and on sign conventions, explicit examples (Section~\ref{sec:examples}), and a list of open problems (Section~\ref{sec:conclusion}).

\paragraph{Organisation.}
Section~\ref{sec:prelim} collects the basic notions on statistical manifolds, statistical curvature and statistical submanifolds. Section~\ref{sec:solitons} introduces statistical solitons, almost quasi-Yamabe solitons and concircular vector fields. Section~\ref{sec:gradient} reviews gradient solitons and statistical structures after Blaga and Chen. Section~\ref{sec:warped} recalls statistical warped products and Section~\ref{sec:warpsol} describes statistical solitons on them. Section~\ref{sec:kenmotsu} reviews Kenmotsu statistical manifolds, and Sections~\ref{sec:xi}--\ref{sec:yamabe} study solitons on their submanifolds, with and without concircular fields. Section~\ref{sec:chen} describes the Chen inequality for statistical warped product submanifolds. Section~\ref{sec:examples} contains explicit examples, and Section~\ref{sec:conclusion} concludes with remarks and open problems.

\paragraph{Conventions.}
All manifolds are smooth and connected; $\Gam(TN)$ denotes smooth vector fields; $\nabla^0$ is the Levi-Civita connection; objects with a star refer to the dual connection $\nabla^*$, and a superscript $0$ to the Levi-Civita connection. Statements proved for $\nabla$ have dual versions for $\nabla^*$ obtained by interchanging $\nabla$ and $\nabla^*$ (and $\mathcal T\mapsto-\mathcal T$).

\section{Preliminaries}\label{sec:prelim}

\subsection{Statistical manifolds}

Let $(\tN,\tg)$ be a (pseudo-)Riemannian manifold and $\tn$ an affine connection on $\tN$. The \emph{dual connection} $\tn^{*}$ of $\tn$ with respect to $\tg$ is defined by
\begin{equation}\label{eq:dual}
E\,\tg(F,G)=\tg(\tn_E F,G)+\tg(F,\tn^{*}_E G),\qquad E,F,G\in\Gam(T\tN),
\end{equation}
where $\Gam(T\tN)$ denotes the space of smooth sections on the tangent bundle $T\tN$ of $\tN$.

\begin{definition}
The triple $(\tN,\tn,\tg)$ is a \emph{statistical manifold} if 
\begin{enumerate}
    \item the Codazzi equation $$(\tn_E\tg)(F,G)=(\tn_F\tg)(E,G)$$ holds, and
    \item $\tn$ is torsion-free.
\end{enumerate}
In this case $(\tn^{*},\tg)$ is again a statistical structure, $$(\tn^{*})^{*}=\tn$$ and $$\tn^{0}=\tfrac12(\tn+\tn^{*})$$ is the Levi-Civita connection of $\tg$.
\end{definition}

Equivalently, a statistical structure is obtained as $$\tn=\tn^{0}+\mathcal T,$$ where $\mathcal T\in\Gam(T^{(1,2)}\tN)$ is the symmetric \emph{difference tensor} satisfying
\begin{equation}\label{eq:T}
\mathcal T(E,F)=\mathcal T(F,E),\qquad \tg(\mathcal T(E,F),G)=\tg(F,\mathcal T(E,G)),
\end{equation}
and then $\tn^{*}=\tn^{0}-\mathcal T$. 

\subsection{Statistical curvature}

Let $\Rie$, $\Rie^{*}$ and $\Rie^{0}$ denote the curvature tensors of $\nabla$, $\nabla^{*}$ and $\nabla^{0}$. The \emph{statistical curvature tensor} is
\begin{equation}\label{eq:S}
S=\tfrac12(\Rie+\Rie^{*})=\Rie^{0}+[\mathcal T_E,\mathcal T_F],
\end{equation}
(see \cite{Furuhata2017}). Following Opozda \cite{Opozda2015,Opozda2016}, the \emph{statistical sectional curvature} of the plane spanned by orthonormal $E,F$ is
\begin{equation}\label{eq:Sec}
\Sec(E\wedge F)=g(S(E,F)F,E)=\tfrac12\big(g(\Rie(E,F)F,E)+g(\Rie^{*}(E,F)F,E)\big),
\end{equation}
and for an orthonormal basis $\{e_1,\dots,e_m\}$ the scalar curvature is $$\sigma^{\nabla,\nabla^*}=\sum_{i<j}\Sec(e_i\wedge e_j).$$ The Ricci tensors $\Ric$ and $\Ric^{*}$ of $\nabla$ and $\nabla^{*}$ are the traces of $E\mapsto\Rie(E,F)G$ and $E\mapsto \Rie^*(E,F)G$.

A statistical manifold $(\tN,\tn,\tg)$ is of \emph{constant curvature} $\tilde c$ if $$S(E,F)G=\tilde c\,(g(F,G)E-g(E,G)F).$$

\subsection{Statistical submanifolds}

Let $N$ be a submanifold of $(\tN,\tn,\tg)$ with induced statistical structure $(\nabla,g)$. The Gauss and Weingarten formulae read \cite{Vos1989}
\begin{align}
\tn_EF&=\nabla_EF+h(E,F), & \tn^{*}_EF&=\nabla^{*}_EF+h^{*}(E,F),\label{eq:gauss}\\
\tn_E U&=-A_UE+D^{\perp}_EU, & \tn^{*}_E U&=-A^{*}_UE+D^{\perp *}_EU,\label{eq:weingarten}
\end{align}
for $E,F\in\Gam(TN)$, $U\in\Gam(T^{\perp}N)$, where $h,h^{*}$ are the symmetric imbedding curvature tensors, related to the shape operators by
\begin{equation}\label{eq:AhA}
\tg(h(E,F),U)=g(A^{*}_UE,F),\qquad \tg(h^{*}(E,F),U)=g(A_UE,F).
\end{equation}
The Gauss equation takes the form
\begin{equation}\label{eq:GaussEq}
\tg(\widetilde\Rie(E,F)G,H)=g(\Rie(E,F)G,H)+\tg(h(E,G),h^{*}(F,H))-\tg(h^{*}(E,H),h(F,G)),
\end{equation}
together with its dual. The mean curvature vectors are $$H=\frac1m\tr h \quad {\rm and}\quad H^{*}=\frac1m\tr h^{*},$$ and $N$ is called \emph{minimal} if $H=H^{*}=0$, \emph{totally geodesic} if $h=h^*=0$, and \emph{totally umbilical} if $h(E,F)=g(E,F)H$, and $h^{*}(E,F)=g(E,F)H^{*}$, respectively. Further, we say $N$ is \emph{$U$-umbilical} for a normal field $U$ if we have $A_U=fI$ and $A^{*}_U=fI$ for some function $f$.

Note that $A_U^{0}:=\tfrac12(A_U+A_U^{*})$ is the shape operator of $N$ for the Levi-Civita connection, and $h^0=\frac12(h+h^*)$.

\section{Statistical Solitons, Yamabe-Type Solitons and Concircular Fields}\label{sec:solitons}

\subsection{Ricci solitons for general connections}

A Ricci soliton $(g,\zeta,\lambda)$ on a Riemannian manifold is classified as \emph{shrinking}, \emph{steady} or \emph{expanding} according to whether $\lambda<0$, $\lambda=0$ or $\lambda>0$ in the convention 
$\Ric+\frac12\Lie_\zeta g+\lambda g=0$. Crasmareanu \cite{Crasmareanu2018} observed that for a manifold with an arbitrary linear connection $\tn$, one may write the soliton condition as
\[
\tn\zeta+\widetilde Q+\lambda I=0,\qquad \tg(\widetilde QE,F)=\widetilde\Ric(E,F),
\]
where $\widetilde Q$ is the Ricci operator and $(\tn\zeta)(E)=\tn_E\zeta$. A statistical manifold is called \emph{Ricci-symmetric} if $\widetilde Q$ (equivalently $\widetilde Q^{*}$) is symmetric.

\begin{definition}[Statistical soliton]\label{def:statsol}
A pair $(\zeta,\lambda)$ is a \emph{statistical soliton} on a Ricci-symmetric statistical manifold $(\tN,\tn,\tg)$ if $(\tg,\zeta,\lambda)$ is simultaneously a $\tn$-Ricci and a $\tn^{*}$-Ricci soliton:
\begin{equation}\label{eq:statsol}
\tn\zeta+\widetilde Q+\lambda I=0,\qquad \tn^{*}\zeta+\widetilde Q^{*}+\lambda I=0 .
\end{equation}
\end{definition}
In particular, if $\tn=\tn^*=\tn^0$ one recovers the classical gradient-free Ricci soliton equation. In terms of tensors, \eqref{eq:statsol} reads
\[
g(\nabla_E\zeta,F)+\Ric(E,F)+\lambda g(E,F)=0,\qquad
g(\nabla^*_E\zeta,F)+\Ric^*(E,F)+\lambda g(E,F)=0 .
\]
Adding the two equations and using that $\nabla^{0}\!=\!\frac12(\nabla+\nabla^*)$ gives the Levi-Civita-type equation $\frac12(\Lie_\zeta g)+\frac12(\Ric+\Ric^*)+\lambda g=0$, since $\Lie_\zeta g(E,F)=g(\nabla_E\zeta,F)+g(E,\nabla^{*}_F\zeta)$ for torsion-free dual connections.

\subsection{Almost quasi-Yamabe solitons}

The Yamabe flow deforms the metric within its conformal class, and its fixed points have constant scalar curvature; a Yamabe soliton satisfies $$\Lie_\zeta g=2(R-\lambda)g.$$ Following Chen and Deshmukh \cite{ChenDeshmukh2018} (see also \cite{Jub2020}), when $\lambda$ is promoted to a function and a term $\omega\,\eta\otimes\eta$ is added we obtain:

\begin{definition}[Almost quasi-Yamabe soliton]
Let $(B,g)$ be a Riemannian manifold of dimension $>2$, $\zeta$ a vector field, $\eta$ a $1$-form and $\lambda,\omega$ smooth functions. The data $(g,\zeta,\lambda,\omega)$ define an \emph{almost quasi-Yamabe soliton} if
\begin{equation}\label{eq:aqy}
\tfrac12\Lie_\zeta g+(\lambda-R)\,g+\omega\,\eta\otimes\eta=0,
\end{equation}
where $R$ is the scalar curvature.
\end{definition}

\subsection{Concircular and concurrent vector fields}
A vector field $v$ on a Riemannian manifold is \emph{concircular} \cite{Fialkow1939} if $\nabla_Ev=\delta E$ for a non-trivial function $\delta$; it is \emph{concurrent} if $\delta=1$. On a statistical manifold we say that $v$ is concircular if
\begin{equation}\label{eq:concirc}
\tn_Ev=\delta E,\qquad \tn^{*}_Ev=\delta E .
\end{equation}
Chen \cite{Chen2015} used concircular fields to investigate Ricci solitons on Riemannian submanifolds; the statistical version is developed in Section~\ref{sec:concirc}.

\section{Gradient Solitons and Statistical Structures}\label{sec:gradient}
The soliton equation can also be read in the opposite direction. Instead of asking which solitons live on a given statistical manifold, one asks for which pairs $(h,\nabla)$ constructed from a soliton the Codazzi condition holds, that is, when the soliton produces a (nearly, quasi-)statistical structure. Blaga and Chen developed this point of view \cite{BlagaChen2021} (see also \cite{Blaga2019}); in this section we review their main results, in the notation of the present chapter.

Throughout, $(M,g)$ is an $n$-dimensional pseudo-Riemannian manifold with Levi-Civita connection $\nabla^{0}$, Ricci tensor $\Ric^{0}$, Ricci operator $Q^{0}$ ($g(Q^0X,Y)=\Ric^0(X,Y)$) and scalar curvature $\scal^{0}$. For a function $f$ we write $\Hess^{0}f(X,Y)=g(\nabla^0_X\grad f,Y)$ and $\Delta_0 f=\tr_g\Hess^0 f=\dv(\grad f)$.

\indent 
\emph{Note that this Laplacian carries the sign $+\tr\Hess$, opposite to the convention used in Section~\ref{sec:chen}.} 

Solitons are written here in the form of \cite{BlagaChen2021},
\begin{equation}\label{eq:bc-soliton}
\nabla\zeta+J=\lambda I,
\end{equation}
so that compared with \eqref{eq:statsol} the soliton function is replaced by its negative; the vector field is called \emph{gradient} if $\zeta=\grad f$. The Reeb field of Section~\ref{sec:kenmotsu} keeps the letter $\xi$, and the dual $1$-form of $\zeta$ is denoted $\theta=g(\zeta,\cdot)$.

\subsection{Nearly statistical and quasi-statistical structures}
Let $\nabla$ be an affine connection with torsion $\Tor^{\nabla}$ and $h$ a $(0,2)$-tensor field. Put
\[
(d^{\nabla}h)(X,Y,Z)=(\nabla_Xh)(Y,Z)-(\nabla_Yh)(X,Z)+h(\Tor^{\nabla}(X,Y),Z).
\]
The pair $(h,\nabla)$ is a \emph{quasi-statistical structure} if $d^\nabla h=0$ (Matsuzoe \cite{Matsuzoe2013}), and a \emph{nearly statistical structure} if $h$ is symmetric, $\nabla$ is torsion-free and $d^\nabla h=0$, that is, $(\nabla_Xh)(Y,Z)=(\nabla_Yh)(X,Z)$. A statistical structure in the sense of Section~\ref{sec:prelim} is a nearly statistical structure $(g,\nabla)$ with $g$ non-degenerate. If moreover $\Rie^\nabla=0$, then $\Rie^{\nabla^*}=0$ as well, and one obtains a \emph{Hessian} (dually flat) structure \cite{Shima1977}.

\subsection{Hessian metrics and Ricci metrics}
If $\Hess^0f$ is non-degenerate (of constant signature) it is itself a pseudo-Riemannian metric, and so is $\Ric^0$ when non-degenerate; their Levi-Civita connections are expressed through Koszul's formula, e.g.
\[
2g\big(\nabla^{\Ric^0}_XY-\nabla^0_XY,\,Q^0Z\big)=g((\nabla^0_YQ^0)X,Z)+g((\nabla^0_XQ^0)Y,Z)-g((\nabla^0_ZQ^0)X,Y).
\]
From this one reads off the following facts \cite{Blaga2019}:
\begin{enumerate}[label=(\roman*),itemsep=0pt]
\item $(\Ric^0,\nabla^0)$ is a statistical structure if  and only if $(\nabla^0_XQ^0)Y=(\nabla^0_YQ^0)X$ for all $X,Y$;
\item if $(g,\nabla^{\Ric^0})$ is a statistical structure and $\mathcal D:=\nabla^{\Ric^0}-\nabla^0$, then $$g(X,\mathcal D(Y,Z))=g(Y,\mathcal D(X,Z));$$
\item if $(g,\nabla)$ is quasi-statistical, then $(\Ric^0,\nabla)$ is quasi-statistical if  and only if $$g((\nabla_XQ^0)Z,Y)=g((\nabla_YQ^0)Z,X).$$
\end{enumerate}

\begin{theorem}[\cite{BlagaChen2021}]\label{thm:bc-hess0}
$(\Hess^0f,\nabla^0)$ is a statistical structure on $M$ if and only if the radial curvature of $f$ vanishes, i.e.\ $\Rie^{0}(X,Y)\grad f=0$ for all $X,Y\in\Gam(TM)$.
\end{theorem}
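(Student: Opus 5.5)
Since $\nabla^0$ is torsion-free, the term $h(\Tor^{\nabla}(X,Y),Z)$ drops out of $d^{\nabla^0}h$. Also, $\Hess^0f$ is symmetric, because $\Hess^0 f(X,Y)=X(Yf)-(\nabla^0_XY)f$ and $\nabla^0$ has no torsion. So, by the definition of a (nearly) statistical structure, $(\Hess^0f,\nabla^0)$ is statistical exactly when the Codazzi condition
\[
(\nabla^0_X\Hess^0f)(Y,Z)=(\nabla^0_Y\Hess^0f)(X,Z)\qquad\text{for all }X,Y,Z
\]
holds. I am reading the theorem under the standing assumption that $\Hess^0f$ is non-degenerate, so that it can play the role of the metric. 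The whole proof then reduces to computing the antisymmetrised covariant derivative of the Hessian.

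The first step is to write $\Hess^0f(Y,Z)=g(\nabla^0_Y\grad f,Z)$ and use that $\nabla^0$ is metric. Differentiating gives
\[
(\nabla^0_X\Hess^0f)(Y,Z)=g\big(\nabla^0_X\nabla^0_Y\grad f-\nabla^0_{\nabla^0_XY}\grad f,\;Z\big)=g\big(\nabla^{0\,2}_{X,Y}\grad f,Z\big).
\]
Subtracting the same expression with $X$ and $Y$ interchanged, and using $\nabla^0_XY-\nabla^0_YX=[X,Y]$, yields the Ricci identity
\[
(\nabla^0_X\Hess^0f)(Y,Z)-(\nabla^0_Y\Hess^0f)(X,Z)=g\big(\Rie^0(X,Y)\grad f,Z\big),
\]
where $\Rie^0(X,Y)=[\nabla^0_X,\nabla^0_Y]-\nabla^0_{[X,Y]}$.

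Finally, $g$ is non-degenerate and $Z$ is arbitrary. Hence the left side vanishes for all $X,Y,Z$ if and only if $\Rie^0(X,Y)\grad f=0$ for all $X,Y$. This gives both directions at once.

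There is no real obstacle here; this is a direct computation. The only point needing care is sign and convention bookkeeping for $\Rie^0$, and the conclusion is insensitive to it since only vanishing matters. One should also remark that the non-degeneracy of $\Hess^0f$ is needed only for the word ``statistical''. The Codazzi equivalence holds regardless, so without that hypothesis the statement reads with ``nearly statistical'' in place of ``statistical''.
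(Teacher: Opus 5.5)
Your proof is correct and is the paper's own argument: use metricity of $\nabla^0$ to write $(\nabla^0_X\Hess^0f)(Y,Z)=g\big((\nabla^0)^2_{X,Y}\grad f,Z\big)$, antisymmetrise using torsion-freeness to obtain $g(\Rie^0(X,Y)\grad f,Z)$, and conclude by non-degeneracy of $g$. Your remark that non-degeneracy of $\Hess^0f$ is needed only for the word ``statistical'' (otherwise one gets ``nearly statistical'') correctly makes explicit a hypothesis the paper leaves implicit.
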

\begin{proof}
Since $\nabla^0$ is torsion-free and $\nabla^0g=0$, expanding $$(\nabla^0_X\Hess^0f)(Y,Z)-(\nabla^0_Y\Hess^0f)(X,Z)$$ gives
$$g\big((\nabla^{0})^2_{X,Y}\grad f-(\nabla^{0})^2_{Y,X}\grad f,Z\big)=g(\Rie^0(X,Y)\grad f,Z),$$ where $\nabla^2_{X,Y}=\nabla_X\nabla_Y-\nabla_{\nabla_XY}.$
\end{proof}

\begin{remark} The term \emph{radial curvature} goes back to Klingenberg \cite{Klingenberg1963}.
\end{remark}

For a general statistical structure, the same computation is slightly less rigid. Let $(g,\nabla)$ be statistical, $C=\nabla g$ (totally symmetric), and put $\Hess^\nabla f(X,Y)=g(\nabla_X\grad f,Y)$; this is symmetric because $\nabla=\nabla^0+\mathcal T$ with $g(\mathcal T(\cdot,\cdot),\cdot)$ totally symmetric, cf.\ \eqref{eq:T}.

\begin{theorem}\label{thm:bc-hessnabla}
Let $(g,\nabla)$ be a statistical structure. Then $(\Hess^\nabla f,\nabla)$ is a statistical structure if  and only if
\[
g(\Rie^\nabla(X,Y)\grad f,Z)=C(Y,\nabla_X\grad f,Z)-C(X,\nabla_Y\grad f,Z),\qquad X,Y,Z\in\Gam(TM).
\]
In particular, if $\nabla g=0$ (e.g.\ $\nabla=\nabla^0$) the condition reduces to $\Rie^\nabla(X,Y)\grad f=0$.
\end{theorem}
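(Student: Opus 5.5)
Since $\nabla$ is torsion-free and $\Hess^\nabla f$ is symmetric (as noted just before the statement), the pair $(\Hess^\nabla f,\nabla)$ is statistical exactly when the Codazzi condition
\[
(\nabla_X\Hess^\nabla f)(Y,Z)=(\nabla_Y\Hess^\nabla f)(X,Z)
\]
holds for all $X,Y,Z$. The plan is to expand this difference, as in the proof of Theorem~\ref{thm:bc-hess0}. The only new feature is that $\nabla g=C\neq0$, so differentiating $g(\nabla_Y\grad f,Z)$ produces an extra $C$-term.

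Write $W=\grad f$. By the Leibniz rule,
\[
X\big(g(\nabla_YW,Z)\big)=C(X,\nabla_YW,Z)+g(\nabla_X\nabla_YW,Z)+g(\nabla_YW,\nabla_XZ).
\]
Subtracting $\Hess^\nabla f(\nabla_XY,Z)+\Hess^\nabla f(Y,\nabla_XZ)$ gives
\[
(\nabla_X\Hess^\nabla f)(Y,Z)=C(X,\nabla_YW,Z)+g(\nabla^2_{X,Y}W,Z)+\big[g(\nabla_YW,\nabla_XZ)-g(\nabla_{\nabla_XZ}W,Y)\big].
\]
The bracket vanishes by the symmetry of $\Hess^\nabla f$. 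Antisymmetrizing in $X,Y$ and using torsion-freeness, $\nabla^2_{X,Y}-\nabla^2_{Y,X}=\Rie^\nabla(X,Y)$, we get
\[
(\nabla_X\Hess^\nabla f)(Y,Z)-(\nabla_Y\Hess^\nabla f)(X,Z)=g(\Rie^\nabla(X,Y)W,Z)+C(X,\nabla_YW,Z)-C(Y,\nabla_XW,Z).
\]
Setting this to zero yields exactly the stated condition. The sign and ordering of the $C$-terms should be checked against total symmetry of $C$; I expect this bookkeeping to be the only delicate point.

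When $\nabla g=0$ the $C$-terms drop out, leaving $\Rie^\nabla(X,Y)\grad f=0$. This recovers Theorem~\ref{thm:bc-hess0} for $\nabla=\nabla^0$.

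One further hypothesis is needed: for $(\Hess^\nabla f,\nabla)$ to be a statistical structure in the paper's sense, $\Hess^\nabla f$ must be a non-degenerate metric. I would state this as implicitly assumed, as it is in Theorem~\ref{thm:bc-hess0}.
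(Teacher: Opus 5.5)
Your proposal is correct and is the paper's proof: expand $(\nabla_X\Hess^\nabla f)(Y,Z)=C(X,\nabla_Y\grad f,Z)+g(\nabla^2_{X,Y}\grad f,Z)$, antisymmetrize in $X,Y$, and use $\Tor^\nabla=0$ to obtain $\Rie^\nabla(X,Y)\grad f$. The $C$-term bookkeeping you flag is already right as written, and your bracket actually cancels straight from the definition, since $\Hess^\nabla f(Y,\nabla_XZ)=g(\nabla_Y\grad f,\nabla_XZ)$, so symmetry of $\Hess^\nabla f$ is not needed at that step.
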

\begin{proof}
With $H=\Hess^\nabla f$ one computes $$(\nabla_XH)(Y,Z)=C(X,\nabla_Y\grad f,Z)+g(\nabla^2_{X,Y}\grad f,Z).$$ Antisymmetrising in $X,Y$ and using $\Tor^\nabla=0$ gives $$(d^\nabla H)(X,Y,Z)=C(X,\nabla_Y\grad f,Z)-C(Y,\nabla_X\grad f,Z)+g(\Rie^\nabla(X,Y)\grad f,Z).$$
\end{proof}

\begin{remark}
In \cite{BlagaChen2021} the statement is given as ``$\Rie^\nabla(X,Y)\grad f=0$'' for every statistical $\nabla$; the two $C$-terms above are what remains of the Codazzi condition for $\nabla g$ and vanish when $\nabla g=0$, which is the case used in the applications below.
\end{remark}

\subsection{Gradient almost solitons}
Let $f,\lambda$ be smooth functions. The triple $(g,f,\lambda)$ is a
\begin{itemize}[itemsep=0pt]
\item \emph{gradient almost Ricci soliton} if $\Hess^0f+\Ric^0=\lambda g$;
\item \emph{gradient almost Einstein soliton} if $\Hess^0f+\Ric^0=\big(\lambda+\tfrac12\scal^0\big)g$;
\item \emph{gradient almost Yamabe soliton} if $\Hess^0f=(\lambda-\scal^0)g$.
\end{itemize}
When $\lambda$ is constant, ``almost'' is dropped. Differentiating covariantly gives, respectively,
$$(\nabla^0_X\Hess^0f)(Y,Z)+(\nabla^0_X\Ric^0)(Y,Z)=X(\lambda)g(Y,Z),$$ with $\lambda$ replaced by $\lambda+\tfrac12\scal^0$ for Einstein solitons, and $$(\nabla^0_X\Hess^0f)(Y,Z)=X(\lambda-\scal^0)g(Y,Z)$$ for Yamabe solitons. This yields:

\begin{proposition}[\cite{BlagaChen2021}]\label{prop:bc-const}
\begin{enumerate}[label=(\roman*),itemsep=0pt]
\item If $(g,f,\lambda)$ is a gradient Ricci soliton, then $(\Hess^0f,\nabla^0)$ is statistical if and only if $(\Ric^0,\nabla^0)$ is statistical.
\item If $(g,f,\lambda)$ is a gradient Einstein soliton and $\scal^0$ is constant, the same equivalence holds.
\item If $(g,f,\lambda)$ is a gradient Yamabe soliton and $\scal^0$ is constant, then $(\Hess^0f,\nabla^0)$ is statistical.
\end{enumerate}
\end{proposition}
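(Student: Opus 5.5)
The plan is to work entirely with the differentiated soliton identities displayed just before the statement, together with the definition of a statistical structure. Since $\nabla^0$ is torsion-free, and $\Hess^0f$ and $\Ric^0$ are symmetric, each of the pairs $(\Hess^0f,\nabla^0)$ and $(\Ric^0,\nabla^0)$ is statistical precisely when the corresponding tensor satisfies the Codazzi condition with respect to $\nabla^0$. (Non-degeneracy is tacitly assumed, as in the definition of Hessian and Ricci metrics.) Explicitly, with $d^{\nabla^0}$ as in the definition of quasi-statistical structures,
\[
(d^{\nabla^0}\Hess^0f)(X,Y,Z)=(\nabla^0_X\Hess^0f)(Y,Z)-(\nabla^0_Y\Hess^0f)(X,Z),
\]
and similarly for $\Ric^0$. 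The idea is to antisymmetrise the differentiated soliton equations in $X,Y$ and read off how $d^{\nabla^0}\Hess^0f$ and $d^{\nabla^0}\Ric^0$ are related.

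For (i), $\lambda$ is constant, so $X(\lambda)=0$. The differentiated identity becomes $\nabla^0_X\Hess^0f=-\nabla^0_X\Ric^0$ for every $X$. Antisymmetrising gives
\[
d^{\nabla^0}\Hess^0f=-d^{\nabla^0}\Ric^0,
\]
so one Codazzi condition holds if and only if the other does. This gives the equivalence. For (ii), the right-hand side of the differentiated equation is $X(\lambda+\tfrac12\scal^0)g(Y,Z)$. This vanishes because $\lambda$ is constant and $\scal^0$ is assumed constant, and the argument of (i) then applies verbatim.

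For (iii), the differentiated Yamabe identity reads $(\nabla^0_X\Hess^0f)(Y,Z)=X(\lambda-\scal^0)g(Y,Z)$. With $\lambda$ and $\scal^0$ constant, the right-hand side is zero. Hence $\nabla^0\Hess^0f=0$, and in particular $d^{\nabla^0}\Hess^0f=0$, so $(\Hess^0f,\nabla^0)$ is statistical. One can also cross-check this with Theorem~\ref{thm:bc-hess0}. Here $\Hess^0f=cg$ with $c$ constant, so $\nabla^0\grad f=cI$, which gives $\Rie^0(X,Y)\grad f=c(\nabla^0_X Y-\nabla^0_Y X-[X,Y])=0$.

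There is essentially no obstacle; the only point needing care is the hypothesis that the tensors are non-degenerate. In (iii), if $\lambda-\scal^0=0$ then $\Hess^0f$ vanishes identically and cannot serve as a metric. I would remark that the statement is then to be read as the Codazzi (nearly statistical) condition, which still holds trivially. If the statement instead intends almost solitons, the residual term $X(\lambda)g(Y,Z)-Y(\lambda)g(X,Z)$ would appear. That term is not needed here, since constancy is assumed.
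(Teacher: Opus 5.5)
Your proposal is correct and follows the paper's argument. The paper likewise differentiates the soliton equation covariantly, observes that the right-hand side $X(\lambda)g$ (respectively $X(\lambda+\tfrac12\scal^0)g$ and $X(\lambda-\scal^0)g$) vanishes under the constancy hypotheses, and antisymmetrises in $X,Y$ to compare the Codazzi conditions; your caveat about non-degeneracy, in particular the case $\lambda=\scal^0$ in (iii), is a point the paper leaves implicit.
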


\begin{proposition}[\cite{BlagaChen2021}]\label{prop:bc-almost}
Let $\Ric^0$ be non-degenerate.
\begin{enumerate}[label=(\roman*),itemsep=0pt]
\item If $(g,f,\lambda)$ is a gradient almost Ricci soliton, then $(\Ric^0,\nabla^0)$ is statistical if and only if
$$g(\Rie^0(X,Y)\grad f,Z)=X(\lambda)g(Y,Z)-Y(\lambda)g(X,Z).$$
\item If $(g,f,\lambda)$ is a gradient almost Einstein soliton, then $(\Ric^0,\nabla^0)$ is statistical if and only if
$$g(\Rie^0(X,Y)\grad f,Z)=X\big(\lambda+\tfrac{\scal^0}2\big)g(Y,Z)-Y\big(\lambda+\tfrac{\scal^0}2\big)g(X,Z).$$
\end{enumerate}
\end{proposition}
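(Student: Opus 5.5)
The plan is to reduce the question to the Codazzi condition for $\Ric^0$ and then substitute the soliton equation. By fact (i) of the previous subsection, for non-degenerate $\Ric^0$ the pair $(\Ric^0,\nabla^0)$ is statistical if and only if $(d^{\nabla^0}\Ric^0)=0$. Since $\nabla^0$ is torsion-free, this means
\[
(\nabla^0_X\Ric^0)(Y,Z)=(\nabla^0_Y\Ric^0)(X,Z)\quad\text{for all } X,Y,Z.
\]
Symmetry of $\Ric^0$ and torsion-freeness are automatic, so this Codazzi identity is the whole condition. The task is therefore to rewrite the antisymmetrised derivative of $\Ric^0$ through the soliton equation.

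For (i), write $\Ric^0=\lambda g-\Hess^0 f$ and differentiate covariantly. Because $\nabla^0g=0$, this gives
\[
(\nabla^0_X\Ric^0)(Y,Z)=X(\lambda)g(Y,Z)-(\nabla^0_X\Hess^0f)(Y,Z).
\]
Antisymmetrising in $X,Y$, I will use the computation in the proof of Theorem~\ref{thm:bc-hess0}:
\[
(\nabla^0_X\Hess^0f)(Y,Z)-(\nabla^0_Y\Hess^0f)(X,Z)=g(\Rie^0(X,Y)\grad f,Z).
\]
This yields
\[
(d^{\nabla^0}\Ric^0)(X,Y,Z)=X(\lambda)g(Y,Z)-Y(\lambda)g(X,Z)-g(\Rie^0(X,Y)\grad f,Z).
\]
Its vanishing is exactly the stated condition, which gives both directions at once.

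For (ii), the argument is identical with $\lambda$ replaced by $\lambda+\tfrac12\scal^0$, since $\Ric^0=(\lambda+\tfrac12\scal^0)g-\Hess^0f$. Here $\scal^0$ is simply absorbed into the function multiplying $g$.

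The only step needing care is the sign convention for $\Rie^0$ in the commutator identity $(\nabla^0)^2_{X,Y}-(\nabla^0)^2_{Y,X}=\Rie^0(X,Y)$. I will fix it as in Theorem~\ref{thm:bc-hess0}, so the curvature term enters with the same orientation as there. I will also check that the identity $(\nabla^0_X\Hess^0 f)(Y,Z)=g((\nabla^0)^2_{X,Y}\grad f,Z)$ holds, using $\nabla^0 g=0$. No genuine obstacle is expected beyond this bookkeeping.
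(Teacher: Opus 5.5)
Your proposal is correct and is the same argument the paper gives. You substitute $\Ric^0=\lambda g-\Hess^0f$ (respectively with $\lambda+\tfrac12\scal^0$ in place of $\lambda$), differentiate using $\nabla^0 g=0$, antisymmetrise in $X,Y$, and apply the identity $d^{\nabla^0}\Hess^0f(X,Y,Z)=g(\Rie^0(X,Y)\grad f,Z)$ from the proof of Theorem~\ref{thm:bc-hess0}; the non-degeneracy of $\Ric^0$ is needed only so that it counts as a metric in a statistical structure.
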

These follow by combining the differentiated soliton equations with the identity $$d^{\nabla^0}\Hess^0f(X,Y,Z)=g(\Rie^0(X,Y)\grad f,Z)$$ from the proof of Theorem~\ref{thm:bc-hess0}. Analogous conditions for a statistical connection $\nabla\neq\nabla^0$ can be written down in the same way \cite[Prop.~2.6]{BlagaChen2021}.

\subsection{Almost $(\nabla,J,\zeta,\lambda)$-solitons}
The three soliton equations above are of the form $\nabla^0\zeta+J=\lambda' I$ with $\zeta=\grad f$, where $J=Q^0$ and $\lambda'=\lambda$ (Ricci), $J=Q^0$ and $\lambda'=\lambda+\frac12\scal^0$ (Einstein), $J=0$ and $\lambda'=\lambda-\scal^0$ (Yamabe). This suggests the general notion: an \emph{almost $(\nabla,J,\zeta,\lambda)$-soliton} is a datum consisting of an affine connection $\nabla$, a $(1,1)$-tensor $J$, a vector field $\zeta$ and a function $\lambda$ satisfying \eqref{eq:bc-soliton}. Put $\Omega=g(J\cdot,\cdot)$.

\begin{lemma}\label{lem:bc-sym}
For an almost $(\nabla,J,\zeta,\lambda)$-soliton, $\Omega$ is symmetric if and only if $\nabla\zeta$ is $g$-self-adjoint: $g(\nabla_X\zeta,Y)=g(X,\nabla_Y\zeta)$.
\end{lemma}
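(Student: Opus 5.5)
The plan is to read the soliton equation \eqref{eq:bc-soliton} as an identity of $(0,2)$-tensors and compare the symmetry of its terms. Pairing $\nabla_X\zeta+JX=\lambda X$ with an arbitrary $Y$ via $g$ gives
\[
g(\nabla_X\zeta,Y)+\Omega(X,Y)=\lambda\,g(X,Y),\qquad X,Y\in\Gam(TM).
\]
Interchanging $X$ and $Y$ and subtracting yields
\[
\Omega(X,Y)-\Omega(Y,X)=-\big(g(\nabla_X\zeta,Y)-g(\nabla_Y\zeta,X)\big),
\]
because $\lambda g$ is symmetric, $g$ being symmetric and $\lambda$ a function. So the antisymmetric part of $\Omega$ is, up to sign, the antisymmetric part of the bilinear form $(X,Y)\mapsto g(\nabla_X\zeta,Y)$.

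From this the equivalence is immediate. $\Omega$ is symmetric exactly when the left-hand side vanishes for all $X,Y$, which happens exactly when $g(\nabla_X\zeta,Y)=g(\nabla_Y\zeta,X)$ for all $X,Y$. By symmetry of $g$, the latter is $g(\nabla_X\zeta,Y)=g(X,\nabla_Y\zeta)$, i.e.\ $\nabla\zeta$ is $g$-self-adjoint. Both directions come from the same identity, so no separate argument is needed.

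There is essentially no obstacle; the only care needed is bookkeeping. First, the lemma uses no property of $\nabla$ (torsion-freeness, metricity, or the statistical condition) and no non-degeneracy beyond $g$ being a pseudo-Riemannian metric, so the proof must not invoke any of these. Second, ``$\lambda I$'' is taken to mean $X\mapsto\lambda X$ with $\lambda$ a function, so that $g(\lambda X,Y)$ is symmetric. As a side remark, if $\nabla=\nabla^0$, self-adjointness of $\nabla^0\zeta$ is equivalent to $d\theta=0$, i.e.\ $\theta$ closed, which holds in particular in the gradient case $\zeta=\grad f$. This explains why $\Omega$ is automatically symmetric for the gradient solitons considered earlier.
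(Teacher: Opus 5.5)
Your proof is correct and is the immediate verification the paper has in mind (the lemma is stated there without proof). Pairing $\nabla_X\zeta+JX=\lambda X$ with $Y$ gives $\Omega(X,Y)=\lambda g(X,Y)-g(\nabla_X\zeta,Y)$, so $\Omega$ is symmetric exactly when $(X,Y)\mapsto g(\nabla_X\zeta,Y)$ is, and your side remark identifying this with $d\theta=0$ for $\nabla=\nabla^0$ is also right.
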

\begin{lemma}[\cite{Blaga2019}]\label{lem:bc-codazzi}
For torsion-free $\nabla$: $(\nabla_X\Omega)(Y,Z)=(\nabla_Y\Omega)(X,Z)$ if and only if $(\nabla_Xg)(JY,Z)-(\nabla_Yg)(JX,Z)=g((\nabla_YJ)X-(\nabla_XJ)Y,Z)$. In particular $\Omega$ is $\nabla^0$-Codazzi if and only if $J$ is $\nabla^0$-Codazzi.
\end{lemma}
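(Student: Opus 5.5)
The plan is to verify Lemma~\ref{lem:bc-codazzi} by a direct expansion of $\nabla\Omega$ in terms of $\nabla g$ and $\nabla J$. For torsion-free $\nabla$ nothing else is needed. Since $\Omega(Y,Z)=g(JY,Z)$, the Leibniz rule for the covariant derivative of the contraction gives
\[
(\nabla_X\Omega)(Y,Z)=X\,g(JY,Z)-g(J\nabla_XY,Z)-g(JY,\nabla_XZ).
\]
Writing $X\,g(JY,Z)=(\nabla_Xg)(JY,Z)+g(\nabla_X(JY),Z)+g(JY,\nabla_XZ)$ and $\nabla_X(JY)=(\nabla_XJ)Y+J\nabla_XY$, the terms involving $\nabla_XZ$ and $J\nabla_XY$ cancel. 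This leaves the identity
\[
(\nabla_X\Omega)(Y,Z)=(\nabla_Xg)(JY,Z)+g((\nabla_XJ)Y,Z).
\]

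Antisymmetrising in $X,Y$ then gives
\[
(\nabla_X\Omega)(Y,Z)-(\nabla_Y\Omega)(X,Z)=(\nabla_Xg)(JY,Z)-(\nabla_Yg)(JX,Z)-g\big((\nabla_YJ)X-(\nabla_XJ)Y,Z\big).
\]
Hence the left side vanishes for all $X,Y,Z$ exactly when $(\nabla_Xg)(JY,Z)-(\nabla_Yg)(JX,Z)=g((\nabla_YJ)X-(\nabla_XJ)Y,Z)$, which is the claimed equivalence. Note that we are testing the Codazzi form $(\nabla_X\Omega)(Y,Z)=(\nabla_Y\Omega)(X,Z)$ itself, not $d^\nabla\Omega$. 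Torsion-freeness makes these the same thing, since the torsion term in $d^\nabla$ drops out. I would remark this so that the statement matches the nearly statistical notion of the previous subsection.

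For the special case $\nabla=\nabla^0$, we have $\nabla^0g=0$, so the left side of the condition vanishes identically. The criterion becomes $g((\nabla^0_YJ)X-(\nabla^0_XJ)Y,Z)=0$ for all $Z$. By non-degeneracy of $g$ this says $(\nabla^0_XJ)Y=(\nabla^0_YJ)X$, i.e.\ $J$ is $\nabla^0$-Codazzi.

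There is no real obstacle here. The only point needing care is bookkeeping of signs and of which slot $\nabla g$ receives $JY$ in; I would double-check it against fact (i) above with $J=Q^0$.
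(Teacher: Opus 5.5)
Your proposal is correct. The identity $(\nabla_X\Omega)(Y,Z)=(\nabla_Xg)(JY,Z)+g((\nabla_XJ)Y,Z)$, followed by antisymmetrisation in $X,Y$ and, for $\nabla^0$, non-degeneracy of $g$, is exactly the direct verification behind this lemma, which the paper quotes from \cite{Blaga2019} without writing out a proof; one small refinement is that this identity holds for any connection, so torsion-freeness is used only to identify the Codazzi condition with $d^\nabla\Omega=0$, as you already observe.
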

\begin{remark}
If $\Omega$ is $\nabla^0$-Codazzi and $J$ is a Killing tensor ($(\nabla^0_XJ)X=0$), then polarising gives $\nabla^0J=0$.
\end{remark}

\begin{proposition}[\cite{BlagaChen2021}]\label{prop:bc-nearly}
Let $(\nabla,J,\zeta,\lambda)$ be an almost soliton on $(M,g)$ with $\Omega$ symmetric and $\nabla$ torsion-free. Then $(\Omega,\nabla)$ is nearly statistical if and only if
\[
g(\Rie^\nabla(X,Y)\zeta,Z)=(\nabla_Xg)(JY,Z)-(\nabla_Yg)(JX,Z)+g(X(\lambda)Y-Y(\lambda)X,Z).
\]
\end{proposition}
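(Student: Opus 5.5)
Since $\Omega$ is assumed symmetric and $\nabla$ torsion-free, the only condition left to check for $(\Omega,\nabla)$ to be nearly statistical is the Codazzi identity $(\nabla_X\Omega)(Y,Z)=(\nabla_Y\Omega)(X,Z)$. I would substitute $J=\lambda I-\nabla\zeta$ from the soliton equation \eqref{eq:bc-soliton} into Lemma~\ref{lem:bc-codazzi}. That lemma says the Codazzi identity is equivalent to
\[
(\nabla_Xg)(JY,Z)-(\nabla_Yg)(JX,Z)=g\big((\nabla_YJ)X-(\nabla_XJ)Y,Z\big),
\]
so the proof reduces to computing the antisymmetrised derivative of $J$.

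The key computation is the following. Since $J=\lambda I-\nabla\zeta$, we have
\[
(\nabla_XJ)Y=X(\lambda)Y-\nabla_X\nabla_Y\zeta+\nabla_{\nabla_XY}\zeta=X(\lambda)Y-\nabla^2_{X,Y}\zeta .
\]
Because $\Tor^\nabla=0$, we have $\nabla^2_{X,Y}\zeta-\nabla^2_{Y,X}\zeta=\Rie^\nabla(X,Y)\zeta$. Hence
\[
(\nabla_YJ)X-(\nabla_XJ)Y=Y(\lambda)X-X(\lambda)Y+\Rie^\nabla(X,Y)\zeta .
\]
Pairing this with $Z$ and inserting it into the criterion of Lemma~\ref{lem:bc-codazzi}, the Codazzi condition becomes
\[
g(\Rie^\nabla(X,Y)\zeta,Z)=(\nabla_Xg)(JY,Z)-(\nabla_Yg)(JX,Z)+g\big(X(\lambda)Y-Y(\lambda)X,Z\big),
\]
which is exactly the stated condition. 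Each step above is an equivalence, so both directions follow at once.

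The main obstacle I expect is sign bookkeeping. This concerns the curvature convention $\Rie(X,Y)=[\nabla_X,\nabla_Y]-\nabla_{[X,Y]}$ and the orientation of the $\lambda$-terms. It also concerns whether Lemma~\ref{lem:bc-codazzi} is really used in the stated form, which I would re-derive directly if needed. The direct derivation expands
\[
(\nabla_X\Omega)(Y,Z)=(\nabla_Xg)(JY,Z)+g((\nabla_XJ)Y,Z)
\]
and antisymmetrises in $X,Y$. I would also note that the symmetry of $\Omega$ is used only to make ``nearly statistical'' meaningful; by Lemma~\ref{lem:bc-sym} it is equivalent to $\nabla\zeta$ being self-adjoint, and it does not enter the Codazzi computation.
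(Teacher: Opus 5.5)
Your proposal is correct and follows the paper's proof: you derive $\Rie^\nabla(X,Y)\zeta=(\nabla_YJ)X-(\nabla_XJ)Y+X(\lambda)Y-Y(\lambda)X$ from $J=\lambda I-\nabla\zeta$ and torsion-freeness, and then insert it into the Codazzi criterion of Lemma~\ref{lem:bc-codazzi}. Your signs check out, including the expansion $(\nabla_X\Omega)(Y,Z)=(\nabla_Xg)(JY,Z)+g((\nabla_XJ)Y,Z)$ on which that lemma rests.
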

\begin{proof}
From \eqref{eq:bc-soliton}, $\Rie^\nabla(X,Y)\zeta=(\nabla_YJ)X-(\nabla_XJ)Y+X(\lambda)Y-Y(\lambda)X$; now apply Lemma~\ref{lem:bc-codazzi}.
\end{proof}

\begin{corollary}\label{cor:bc-LC}
For $\nabla=\nabla^0$ and $\Omega$ symmetric, $(\Omega,\nabla^0)$ is nearly statistical if and only if $\Rie^0(X,Y)\zeta=X(\lambda)Y-Y(\lambda)X$. If $\lambda$ is constant and $\zeta=\grad f$, the following are equivalent: (i) $(\Omega,\nabla^0)$ is nearly statistical; (ii) $\Rie^0(X,Y)\grad f=0$; (iii) $(\Hess^0f,\nabla^0)$ is nearly statistical.
\end{corollary}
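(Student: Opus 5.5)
The first statement is Proposition~\ref{prop:bc-nearly} with $\nabla=\nabla^0$, so I will specialise it. Because $\nabla^0g=0$, both terms $(\nabla^0_Xg)(JY,Z)$ and $(\nabla^0_Yg)(JX,Z)$ vanish. The torsion-freeness hypothesis holds automatically, and $\Omega$ is symmetric by assumption. The criterion in Proposition~\ref{prop:bc-nearly} then becomes
\[
g(\Rie^0(X,Y)\zeta,Z)=g\big(X(\lambda)Y-Y(\lambda)X,Z\big)\quad\text{for all } Z.
\]
Since $g$ is non-degenerate, this is exactly $\Rie^0(X,Y)\zeta=X(\lambda)Y-Y(\lambda)X$. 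For safety I will also recheck the identity quoted in that proof by differentiating $\nabla^0\zeta=\lambda I-J$:
\[
\nabla^0_X\nabla^0_Y\zeta-\nabla^0_{\nabla^0_XY}\zeta = X(\lambda)Y-(\nabla^0_XJ)Y .
\]
Antisymmetrising in $X,Y$ gives the stated formula for $\Rie^0(X,Y)\zeta$. Lemma~\ref{lem:bc-codazzi} (with $\nabla^0g=0$) then shows that $\Omega$ is Codazzi if and only if $(\nabla^0_XJ)Y=(\nabla^0_YJ)X$, which is the same as the curvature condition with the $\lambda$-terms.

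For the equivalences, take $\lambda$ constant and $\zeta=\grad f$. The equivalence (i)$\Leftrightarrow$(ii) is then immediate from the first part, because $X(\lambda)=Y(\lambda)=0$.

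For (ii)$\Leftrightarrow$(iii), I will use Theorem~\ref{thm:bc-hess0}, or more exactly the identity from its proof:
\[
d^{\nabla^0}\Hess^0f(X,Y,Z)=g(\Rie^0(X,Y)\grad f,Z).
\]
The tensor $\Hess^0f$ is always symmetric and $\nabla^0$ is torsion-free. Hence $(\Hess^0f,\nabla^0)$ is nearly statistical exactly when $d^{\nabla^0}\Hess^0f=0$, which by the identity is $\Rie^0(X,Y)\grad f=0$. This argument does not need non-degeneracy of $\Hess^0f$, which matters because the corollary only asserts "nearly statistical".

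The only point needing care is this distinction between nearly statistical and statistical in (iii). Theorem~\ref{thm:bc-hess0} is phrased for statistical structures, so I will cite the computation in its proof rather than the theorem itself. Apart from that, the argument is a direct specialisation.
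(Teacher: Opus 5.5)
Your proposal is correct and follows essentially the route implicit in the paper: you specialise Proposition~\ref{prop:bc-nearly} to $\nabla=\nabla^0$, where the $\nabla g$ terms vanish, and you obtain (ii)$\Leftrightarrow$(iii) from the identity $d^{\nabla^0}\Hess^0f(X,Y,Z)=g(\Rie^0(X,Y)\grad f,Z)$ in the proof of Theorem~\ref{thm:bc-hess0}. You are right to cite that identity rather than the theorem itself, since this avoids needing non-degeneracy of $\Hess^0f$.
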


\subsection{Connections defined by $1$-forms}\label{subsec:bc-forms}
Inspired by projective equivalence, for an arbitrary $1$-form $\theta=g(\zeta,\cdot)$ on $(M,g)$ consider
\begin{equation}\label{eq:nablatheta}
\nabla^{\theta}:=\nabla^0+\theta\otimes I+I\otimes\theta+g\otimes\zeta,\quad\text{i.e.}\quad
\nabla^\theta_XY=\nabla^0_XY+\theta(X)Y+\theta(Y)X+g(X,Y)\zeta .
\end{equation}
Then $\Tor^{\nabla^\theta}=0$ and $(\nabla^\theta_Xg)(Y,Z)=-2\{\theta(X)g(Y,Z)+\theta(Y)g(X,Z)+\theta(Z)g(X,Y)\}$ is totally symmetric, so $d^{\nabla^\theta}g=0$.

\begin{proposition}[\cite{BlagaChen2021}]\label{prop:bc-theta}
For every $1$-form $\theta$, $(g,\nabla^\theta)$ is a statistical structure and $\nabla^{-\theta}$ is its dual connection. Moreover $\nabla^\theta_\zeta\zeta=\nabla^0_\zeta\zeta+3|\zeta|^2\zeta$, and:
\begin{enumerate}[label=(\roman*),itemsep=0pt]
\item $\zeta$ is $\nabla^\theta$-parallel if and only if $(\nabla^0,\,J=2\theta\otimes\zeta,\,\zeta,\,\lambda=-|\zeta|^2)$ is an almost soliton;
\item $\theta$ is $\nabla^\theta$-parallel if and only if $(\nabla^0,\,J=-2\theta\otimes\zeta,\,\zeta,\,\lambda=|\zeta|^2)$ is an almost soliton.
\end{enumerate}
\end{proposition}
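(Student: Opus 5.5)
The plan is to reduce everything to the difference tensor
\[
K(X,Y):=\nabla^\theta_XY-\nabla^0_XY=\theta(X)Y+\theta(Y)X+g(X,Y)\zeta ,
\]
and then use the description of statistical structures as $\nabla^0+\mathcal T$ with $\mathcal T$ satisfying \eqref{eq:T}. The proof splits into three computations, each a short check.

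\emph{Step 1 (statistical structure and duality).} $K$ is visibly symmetric in $X,Y$, so $\nabla^\theta$ is torsion-free. Moreover
\[
g(K(X,Y),Z)=\theta(X)g(Y,Z)+\theta(Y)g(X,Z)+\theta(Z)g(X,Y)
\]
is totally symmetric in $X,Y,Z$. Hence $g(K(X,Y),Z)=g(Y,K(X,Z))$, which is \eqref{eq:T} with $\mathcal T=K$. By the equivalent description of statistical structures in Section~\ref{sec:prelim}, $(g,\nabla^\theta)$ is statistical and its dual is $\nabla^0-K$. Since $K$ is linear in $\theta$ (and $\zeta$ is the $g$-dual of $\theta$), $\nabla^0-K$ is exactly $\nabla^{-\theta}$.

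If one prefers to check \eqref{eq:dual} directly, it follows from
\[
X g(Y,Z)=g(\nabla^0_XY,Z)+g(Y,\nabla^0_XZ)
\]
and the cancellation $g(K(X,Y),Z)-g(Y,K(X,Z))=0$. Alternatively, the Codazzi condition follows from the already stated formula for $\nabla^\theta g$, which is totally symmetric.

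\emph{Step 2 (the formula for $\nabla^\theta_\zeta\zeta$).} Substitute $X=Y=\zeta$ into \eqref{eq:nablatheta}. Each of the three extra terms equals $|\zeta|^2\zeta$, because $\theta(\zeta)=g(\zeta,\zeta)$.

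\emph{Step 3 (items (i) and (ii)).} For (i), compute
\[
\nabla^\theta_X\zeta=\nabla^0_X\zeta+2\theta(X)\zeta+|\zeta|^2X .
\]
This vanishes for all $X$ if and only if $\nabla^0\zeta+J=\lambda I$ with $J=2\theta\otimes\zeta$ (that is, $JX=2\theta(X)\zeta$) and $\lambda=-|\zeta|^2$. This is precisely the almost soliton equation \eqref{eq:bc-soliton}.

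For (ii), compute
\[
(\nabla^\theta_X\theta)(Y)=X\theta(Y)-\theta(\nabla^\theta_XY)=g(\nabla^0_X\zeta,Y)-2\theta(X)\theta(Y)-|\zeta|^2g(X,Y),
\]
using $\nabla^0 g=0$. Raising the index $Y$ shows that this vanishes if and only if $\nabla^0_X\zeta-2\theta(X)\zeta=|\zeta|^2X$. That is the almost soliton equation with $J=-2\theta\otimes\zeta$ and $\lambda=|\zeta|^2$.

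There is no real obstacle. The only point needing care is the sign and convention bookkeeping: the meaning of $\theta\otimes\zeta$ as a $(1,1)$-tensor, and the sign convention of \eqref{eq:bc-soliton}, namely $\nabla\zeta+J=\lambda I$ rather than \eqref{eq:statsol}. I would state both conventions explicitly before concluding.
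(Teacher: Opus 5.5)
Your proof is correct and follows essentially the same route as the paper. Both rest on the total symmetry of $g(\theta(X)Y+\theta(Y)X+g(X,Y)\zeta,\,Z)$, which you exploit through the difference-tensor characterization \eqref{eq:T} and the paper through a direct check of the duality identity; your computations of $\nabla^\theta_\zeta\zeta$, $\nabla^\theta_X\zeta$ and $(\nabla^\theta_X\theta)(Y)$ coincide with the paper's and give the same signs $\lambda=-|\zeta|^2$ in (i) and $\lambda=|\zeta|^2$ in (ii).
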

\begin{proof}
By direct calculation, $$g(\nabla^\theta_XY,Z)+g(Y,\nabla^{-\theta}_XZ)=Xg(Y,Z).$$ For (i), $\nabla^\theta_X\zeta=\nabla^0_X\zeta+2\theta(X)\zeta+|\zeta|^2X$;\\
\noindent 
For (ii), $(\nabla^\theta_X\theta)(Y)=(\nabla^0_X\theta)(Y)-2\theta(X)\theta(Y)-|\zeta|^2g(X,Y)$.
\end{proof}
\begin{remark}
The signs of $\lambda$ in (i) and (ii) are interchanged in the printed version of \cite{BlagaChen2021}; the signs above are the ones obtained from the two displayed computations.
\end{remark}

The curvature of $\nabla^\theta$ is
\begin{align*}
(\Rie^{\theta}-\Rie^{0})(X,Y)Z&=[g(Y,\nabla^0_X\zeta)-g(X,\nabla^0_Y\zeta)]Z\\
&\quad+[\theta(Y)\theta(Z)+g(Y,Z)|\zeta|^2-g(Z,\nabla^0_Y\zeta)]X\\
&\quad-[\theta(X)\theta(Z)+g(X,Z)|\zeta|^2-g(Z,\nabla^0_X\zeta)]Y\\
&\quad+g(Y,Z)\nabla^0_X\zeta-g(X,Z)\nabla^0_Y\zeta\\
&\quad+[\theta(X)g(Y,Z)-\theta(Y)g(X,Z)]\zeta .
\end{align*}
In particular, if $\zeta$ is $g$-null and $\nabla^0$-parallel (the situation on Walker manifolds \cite{Vazquez2009}), then $(\Rie^\theta-\Rie^0)(X,Y)Z\in\ker\theta$.

\begin{example}\label{ex:bc-kenmotsu}
Let $(M,\varphi,\xi,\eta,g)$ be a Kenmotsu manifold and $\nabla^\eta=\nabla^0+\eta\otimes I+I\otimes\eta+g\otimes\xi$. Since $\nabla^0\xi=I-\eta\otimes\xi$, we get $\nabla^\eta\xi=2I+\eta\otimes\xi$, so $(\nabla^\eta,J=-\eta\otimes\xi,\xi,\lambda=2)$ is an almost soliton on $M$.
\end{example}

\paragraph{Divergence, Laplacian and Hessian.}
Tracing \eqref{eq:nablatheta} gives $\dv^{\theta}Z=\dv^0Z+(n+2)\theta(Z)$ and hence, with $\Delta_\theta=\dv^\theta\circ\grad$,
\begin{align*}
\Delta_\theta\tilde f&=\Delta_0\tilde f+(n+2)\theta(\grad\tilde f),\\
\Hess^\theta\tilde f(X,Y)&=\Hess^0\tilde f(X,Y)+\theta(\grad\tilde f)g(X,Y)+\theta(X)d\tilde f(Y)+\theta(Y)d\tilde f(X).
\end{align*}
For $\zeta=\grad f$ (so $\theta=df$) one gets
\begin{equation}\label{eq:bc-Hessdf}
\Hess^{df}f=\Hess^0f+|\grad f|^2g+2\,df\otimes df,\qquad \Delta_{df}f=\Delta_0f+(n+2)|\grad f|^2 .
\end{equation}
Consequently, if $f$ is $\Delta_0$-harmonic then $\Delta_{df}f\ge0$ whenever $|\grad f|^2\ge 0$; and if $f$ is $\Delta_{df}$-harmonic on a compact Riemannian manifold then $f$ is locally constant. Moreover, a $\Delta_{df}$-harmonic function $\tilde f$ is also $\Delta_0$-harmonic if and only if $\grad f\perp\grad\tilde f$. Integrating $\dv^{df}$ over a compact $M$ gives $$\int_M\dv^{df}X\,d\mu_g=(n+2)\int_Mg(\grad f,X)\,d\mu_g.$$

\paragraph{Ricci tensor.}
Tracing the curvature formula yields
\[
\Ric^\theta(Y,Z)=\Ric^0(Y,Z)+g(Y,Z)\{n|\zeta|^2+\dv^0\zeta\}+(n-2)\theta(Y)\theta(Z)+g(Y,\nabla^0_Z\zeta)-(n+1)g(Z,\nabla^0_Y\zeta).
\]
\begin{proposition}\label{prop:bc-equiaffine}
$\nabla^\theta$ is equiaffine if and only if $\nabla^0\zeta$ is $g$-self-adjoint, i.e.\ $\theta$ is closed. In particular $\nabla^{df}$ is always equiaffine.
\end{proposition}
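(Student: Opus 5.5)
The plan is to use the characterisation of equiaffine connections that applies here. For a torsion-free connection $\nabla$, being equiaffine (locally admitting a parallel volume form) is equivalent to the Ricci tensor being symmetric. It is also equivalent to the vanishing of $\tr\big(Z\mapsto \Rie^\nabla(X,Y)Z\big)$, since by the first Bianchi identity this trace equals $\Ric(X,Y)-\Ric(Y,X)$ up to sign. Since $\Tor^{\nabla^\theta}=0$, it suffices to compute the skew part of $\Ric^\theta$ from the formula displayed just before the statement.

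\textbf{Step 1 (skew part of the Ricci tensor).} In the formula for $\Ric^\theta(Y,Z)$, the following terms are already symmetric in $(Y,Z)$: $\Ric^0(Y,Z)$, $g(Y,Z)\{n|\zeta|^2+\dv^0\zeta\}$ and $(n-2)\theta(Y)\theta(Z)$. Only the last two terms contribute to the skew part, and
\[
\Ric^\theta(Y,Z)-\Ric^\theta(Z,Y)=(n+2)\big\{g(Y,\nabla^0_Z\zeta)-g(Z,\nabla^0_Y\zeta)\big\}.
\]
Since $\nabla^0$ is torsion-free and metric,
\[
g(Y,\nabla^0_Z\zeta)-g(Z,\nabla^0_Y\zeta)=(\nabla^0_Z\theta)(Y)-(\nabla^0_Y\theta)(Z)=-\,d\theta(Y,Z),
\]
with the convention $d\theta(Y,Z)=(\nabla^0_Y\theta)Z-(\nabla^0_Z\theta)Y$. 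Hence the skew part of $\Ric^\theta$ is $-(n+2)\,d\theta$.

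\textbf{Step 2 (conclusion).} Because $n+2\neq0$, $\Ric^\theta$ is symmetric iff $d\theta=0$. This happens iff $\nabla^0\zeta$ is $g$-self-adjoint, because $g(\nabla^0_Y\zeta,Z)-g(Y,\nabla^0_Z\zeta)=d\theta(Y,Z)$. For $\theta=df$ we have $d\theta=d^2f=0$, so $\nabla^{df}$ is always equiaffine.

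As an independent check I would also give the volume-form argument. The divergence formula $\dv^\theta Z=\dv^0Z+(n+2)\theta(Z)$ is equivalent to $\nabla^\theta_X\vol_g=-(n+2)\theta(X)\vol_g$. For a local function $u$ this gives
\[
\nabla^\theta\big(e^{(n+2)u}\vol_g\big)=(n+2)(du-\theta)\otimes e^{(n+2)u}\vol_g .
\]
Every volume form is of the form $e^{(n+2)u}\vol_g$, so a parallel one exists locally iff $\theta=du$ locally, i.e.\ (by Poincar\'e's lemma) iff $\theta$ is closed.

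The main obstacle is bookkeeping rather than anything conceptual. One has to trust the traced Ricci formula, in particular the coefficients $1$ and $-(n+1)$ on the two $\nabla^0\zeta$ terms, and fix sign conventions for $d\theta$. I would settle both by tracing the displayed curvature formula in its first slot once more. The volume-form argument gives a convention-independent confirmation, since it only needs the divergence formula, and this is why I include it.
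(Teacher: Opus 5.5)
Your proposal is correct and follows the paper's argument: you invoke the Nomizu--Sasaki characterisation of torsion-free equiaffine connections by symmetry of the Ricci tensor, and you observe that the skew part of $\Ric^\theta$ is $(n+2)$ times that of $g(Y,\nabla^0_Z\zeta)$, a non-zero multiple of $d\theta$. Your additional check via $\nabla^\theta\vol_g=-(n+2)\,\theta\otimes\vol_g$ is also sound, and it does not depend on the traced Ricci formula (which does check out anyway, including the coefficients $1$ and $-(n+1)$).
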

Indeed, an equiaffine torsion-free connection is characterised by symmetry of its Ricci tensor \cite{NomizuSasaki1994}, and the only non-symmetric part of $\Ric^\theta$ is $g(Y,\nabla^0_Z\zeta)-(n+1)g(Z,\nabla^0_Y\zeta)$, whose antisymmetric part is $(n+2)$ times that of $g(Y,\nabla^0_Z\zeta)$.

\begin{remark}[Conjugate Ricci symmetry]
Recall that a statistical manifold is \emph{conjugate Ricci-symmetric} if $\Ric^\nabla=\Ric^{\nabla^*}$ \cite{Min2015}. For $(g,\nabla^\theta,\nabla^{-\theta})$ the difference $\Ric^\theta-\Ric^{-\theta}$ is the part of the Ricci formula odd in $\zeta$, namely $2\{g(Y,Z)\dv^0\zeta+g(Y,\nabla^0_Z\zeta)-(n+1)g(Z,\nabla^0_Y\zeta)\}$. Its antisymmetric part is $(n+2)\,d\theta$ and its symmetric part is $2\dv^0\zeta\,g-n\Lie_\zeta g$. Hence $(M,g,\nabla^\theta,\nabla^{-\theta})$ is conjugate Ricci-symmetric if and only if $\theta$ is closed and $\Lie_\zeta g=\tfrac2n(\dv^0\zeta)g$, i.e.\ $\zeta$ is a closed conformal vector field (for $\zeta=\grad f$: $\Hess^0f=\tfrac1n(\Delta_0f)g$); this is a condition, not an automatic property of the pair.
\end{remark}

In the gradient case $\zeta=\grad f$, $\theta=df$, one has
\begin{equation}\label{eq:bc-Ricdf}
\Ric^{df}=\Ric^0+\{n|\grad f|^2+\Delta_0f\}g+(n-2)\,df\otimes df-n\Hess^0f,
\end{equation}
and by tracing, $$\scal^{df}=\scal^0+(n-1)(n+2)|\grad f|^2;$$ hence $\scal^{df}\ge\scal^0$ when $|\grad f|^2\ge0$. For the Ricci operators, $$Q^{df}=Q^0+\{n|\grad f|^2+\Delta_0f\}I+(n-2)\,df\otimes\grad f-n\nabla^0\grad f.$$

\begin{proposition}\label{prop:bc-equivalences}
Let $\zeta=\grad f$, $\theta=df$. Then:
\begin{enumerate}[label=(\roman*),itemsep=0pt]
\item $(\nabla^{df},Q^0,\zeta,\lambda)$ is a gradient almost soliton if and only if $$(\nabla^0,Q^0+2\theta\otimes\zeta,\zeta,\lambda-|\zeta|^2)$$ is;
\item $(\nabla^0,Q^{df},\zeta,\lambda)$ is a gradient almost soliton if and only if $$\big(\nabla^0,\tfrac1{1-n}\{Q^0+(n-2)\theta\otimes\zeta\},\zeta,\tfrac1{1-n}\{\lambda-n|\zeta|^2-\Delta_0f\}\big)$$ is;
\item $(\nabla^{df},Q^{df},\zeta,\lambda)$ is a gradient almost soliton if and only if $$\big(\nabla^0,\tfrac1{1-n}(Q^0+n\theta\otimes\zeta),\zeta,\tfrac1{1-n}\{\lambda-(n+1)|\zeta|^2-\Delta_0f\}\big)$$ is.
\end{enumerate}
\end{proposition}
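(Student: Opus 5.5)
The proof is a direct substitution into the soliton equation \eqref{eq:bc-soliton}, read as an identity of $(1,1)$-tensors. Throughout, $\theta\otimes\zeta$ denotes the endomorphism $X\mapsto\theta(X)\zeta$. Since the vector field $\zeta=\grad f$ is the same on both sides of each equivalence, the word ``gradient'' is automatically preserved. So in each case it suffices to rewrite $\nabla\zeta+J-\lambda I$ for the left-hand datum as a nonzero scalar multiple of the corresponding expression for the right-hand datum.

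For (i), I will use the formula from the proof of Proposition~\ref{prop:bc-theta}, with $\theta=df$:
\[
\nabla^{df}_X\zeta=\nabla^0_X\zeta+2\theta(X)\zeta+|\zeta|^2X .
\]
In operator form this is $\nabla^{df}\zeta=\nabla^0\zeta+2\theta\otimes\zeta+|\zeta|^2I$. Then $\nabla^{df}\zeta+Q^0=\lambda I$ holds if and only if
\[
\nabla^0\zeta+(Q^0+2\theta\otimes\zeta)=(\lambda-|\zeta|^2)I,
\]
which is exactly the soliton equation for the second datum.

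For (ii), I insert the formula for the Ricci operator stated just before the proposition,
\[
Q^{df}=Q^0+\{n|\zeta|^2+\Delta_0f\}I+(n-2)\,\theta\otimes\zeta-n\nabla^0\zeta .
\]
The equation $\nabla^0\zeta+Q^{df}=\lambda I$ then becomes
\[
(1-n)\nabla^0\zeta+Q^0+(n-2)\theta\otimes\zeta=(\lambda-n|\zeta|^2-\Delta_0f)I .
\]
For $n\ge2$ I divide by $1-n\neq0$, which gives precisely the stated datum.

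For (iii), I combine the two previous computations. Adding them yields
\[
\nabla^{df}\zeta+Q^{df}=(1-n)\nabla^0\zeta+Q^0+n\,\theta\otimes\zeta+\{(n+1)|\zeta|^2+\Delta_0f\}I .
\]
Setting this equal to $\lambda I$ and dividing by $1-n$ gives the claimed datum.

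I expect no genuine obstacle here. The only points needing care are bookkeeping ones. First, the operator $\theta\otimes\zeta$ must be read consistently with the tensor $df\otimes df$ appearing in \eqref{eq:bc-Ricdf}. Second, the sign of the Hessian term $-n\nabla^0\grad f$ in $Q^{df}$ must be taken as given there. Third, the standing assumption $n>1$ is needed so that the division by $1-n$ is legitimate.
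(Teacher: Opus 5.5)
Your proposal is correct and follows the paper's own argument: substitute $\nabla^{df}\zeta=\nabla^0\zeta+2\theta\otimes\zeta+|\zeta|^2I$ and the formula for $Q^{df}$ into \eqref{eq:bc-soliton}, then divide by $1-n$ in (ii) and (iii). All three rearrangements check out, including the coefficients $n\,\theta\otimes\zeta$ and $(n+1)|\zeta|^2+\Delta_0f$ obtained in (iii).
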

These follow by inserting $$\nabla^{df}\zeta=\nabla^0\zeta+2\theta\otimes\zeta+|\zeta|^2I$$ and the formula for $Q^{df}$. Case (i) says that $$\Hess^0f+\Ric^0=(\lambda-|\grad f|^2)g-2\,df\otimes df,$$ a \emph{gradient almost $\eta$-Ricci soliton} in the sense of \cite{Blaga2015}.

\begin{proposition}\label{prop:bc-etaricci}
Let $\zeta=\grad f$ with $\nabla^0\zeta=\theta\otimes\zeta$ (so $\Hess^0f=df\otimes df$ and $\Delta_0f=|\zeta|^2$). Then:
\begin{enumerate}[label=(\roman*),itemsep=0pt]
\item $(\nabla^0,Q^{df},\zeta,\lambda)$ is a gradient almost soliton if and only if $$-\Hess^0f+\Ric^0=\{\lambda-(n+1)|\zeta|^2\}g;$$ in this case $$\scal^{df}=n\lambda-|\zeta|^2;$$
\item $(\nabla^{df},Q^{df},\zeta,\lambda)$ is a gradient almost soliton if and only if $$(g,f,\lambda-(n+2)|\zeta|^2)$$ is a gradient almost Ricci soliton; in this case $$\scal^0=n\lambda-(n+1)^2|\zeta|^2.$$
\end{enumerate}
\end{proposition}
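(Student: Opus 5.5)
The plan is direct substitution: insert the hypothesis $\nabla^0\zeta=\theta\otimes\zeta$ into the formulas for $Q^{df}$ and $\nabla^{df}\zeta$ established just before Proposition~\ref{prop:bc-equivalences}, and then read off the soliton equation \eqref{eq:bc-soliton}. Here $\theta\otimes\zeta$ denotes the endomorphism $X\mapsto\theta(X)\zeta$. Its trace is $\theta(\zeta)=|\zeta|^2$, which gives $\Delta_0f=|\zeta|^2$. Lowering an index gives $\Hess^0f=df\otimes df$.

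The first step is to simplify the Ricci operator. In
$$Q^{df}=Q^0+\{n|\grad f|^2+\Delta_0f\}I+(n-2)\,df\otimes\grad f-n\nabla^0\grad f$$
we replace $\nabla^0\grad f$ by $\theta\otimes\zeta$ and $\Delta_0 f$ by $|\zeta|^2$. This gives
$$Q^{df}=Q^0+(n+1)|\zeta|^2I-2\,\theta\otimes\zeta.$$

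For (i), the almost soliton equation is $\nabla^0\zeta+Q^{df}=\lambda I$. After substitution it becomes
$$Q^0-\theta\otimes\zeta=\{\lambda-(n+1)|\zeta|^2\}I.$$
Applying $g(\cdot,\cdot)$ and using $g(\theta(X)\zeta,Y)=df(X)df(Y)=\Hess^0f(X,Y)$, this is exactly $-\Hess^0f+\Ric^0=\{\lambda-(n+1)|\zeta|^2\}g$. Each step is reversible, so the equivalence follows. For the scalar curvature, we trace $Q^{df}=\lambda I-\nabla^0\zeta$ to obtain $\scal^{df}=n\lambda-\tr(\theta\otimes\zeta)=n\lambda-|\zeta|^2$.

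For (ii), we use $\nabla^{df}\zeta=\nabla^0\zeta+2\theta\otimes\zeta+|\zeta|^2I$, which under the hypothesis becomes $3\,\theta\otimes\zeta+|\zeta|^2I$. Adding the simplified $Q^{df}$ gives
$$\nabla^{df}\zeta+Q^{df}=Q^0+\theta\otimes\zeta+(n+2)|\zeta|^2I.$$
Since $\theta\otimes\zeta=\nabla^0\zeta$, the equation $\nabla^{df}\zeta+Q^{df}=\lambda I$ is equivalent to
$$\nabla^0\zeta+Q^0=(\lambda-(n+2)|\zeta|^2)I,$$
that is, to $\Hess^0f+\Ric^0=(\lambda-(n+2)|\zeta|^2)g$. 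This says precisely that $(g,f,\lambda-(n+2)|\zeta|^2)$ is a gradient almost Ricci soliton. Tracing gives $|\zeta|^2+\scal^0=n\lambda-n(n+2)|\zeta|^2$, hence $\scal^0=n\lambda-(n+1)^2|\zeta|^2$.

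No step is conceptually hard. The only real obstacle is bookkeeping: keeping the conventions for $\theta\otimes\zeta$ consistent as an endomorphism versus as the bilinear form $df\otimes df$, and tracking the coefficients $n-2$, $-n$ and $2$ so that they combine correctly to $-2$ in the simplified $Q^{df}$.
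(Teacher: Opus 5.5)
Your proposal is correct and is essentially the argument the paper intends; the paper gives no separate proof. The argument substitutes $\nabla^0\zeta=\theta\otimes\zeta$ and $\Delta_0 f=|\zeta|^2$ into the formulas for $Q^{df}$ and $\nabla^{df}\zeta$, which is equivalent to specialising Proposition~\ref{prop:bc-equivalences}(ii)--(iii), and then traces. Your coefficients check out, including $Q^{df}=Q^0+(n+1)|\zeta|^2I-2\,\theta\otimes\zeta$ and both scalar-curvature identities.
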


-------------------------------------
\subsection{Volume formulas and bounds for the Ricci tensor}

In this subsection $M$ is compact without boundary and $|\grad f|_g$ is a \emph{non-zero constant}. Then $$\int_M|\grad f|^2\Delta_0f\,d\mu_g=0,$$ $$\Hess^0f(\grad f,\grad f)=\tfrac12\grad f(|\grad f|^2)=0,$$ and Bochner's formula $$\tfrac12\Delta_0|\grad f|^2=|\Hess^0f|^2+\Ric^0(\grad f,\grad f)+\grad f(\Delta_0f)$$ reduces to $$\Ric^0(\grad f,\grad f)=-|\Hess^0f|^2-\grad f(\Delta_0f).$$ Together with \eqref{eq:bc-Ricdf} this gives:

\begin{proposition}[\cite{BlagaChen2021}]\label{prop:bc-vol}
\begin{eqnarray*}
\vol(M)&=&\frac1{2(n-1)|\grad f|^4}\Big\{\int_M|\Hess^0f|^2d\mu_g+\int_M\grad f(\Delta_0f)\,d\mu_g\\
&&+\int_M\langle\Ric^{df},df\otimes df\rangle\,d\mu_g\Big\}.
\end{eqnarray*}
Writing Bochner's formula in terms of $\Ric^{df}$ and $\Hess^{df}$ one also gets
\begin{eqnarray*}
\vol(M)&=&\frac1{3(n+2)|\grad f|^4}\Big\{\int_M|\Hess^{df}f|^2d\mu_g+\int_M\grad f(\Delta_{df}f)\,d\mu_g\\
&&+\int_M\Ric^{df}(\grad f,\grad f)\,d\mu_g\Big\}.
\end{eqnarray*}
\end{proposition}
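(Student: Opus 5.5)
The plan is to compute $\Ric^{df}(\grad f,\grad f)$ pointwise from \eqref{eq:bc-Ricdf}, simplify it with the constancy of $a:=|\grad f|^2$ and the reduced Bochner identity, and then integrate over $M$. Terms involving $\Delta_0 f$ multiplied by the constant $a$ disappear by the divergence theorem, since $\int_M\Delta_0f\,d\mu_g=0$. Throughout, $\langle\Ric^{df},df\otimes df\rangle$ means $\Ric^{df}(\grad f,\grad f)$.

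\emph{First formula.} Evaluate \eqref{eq:bc-Ricdf} on $(\grad f,\grad f)$. The $g$-term gives $(na+\Delta_0f)a$ and the $df\otimes df$-term gives $(n-2)a^2$. The Hessian term vanishes, because $\Hess^0f(\grad f,\grad f)=\tfrac12\grad f(a)=0$. Hence
\[
\Ric^{df}(\grad f,\grad f)=\Ric^0(\grad f,\grad f)+2(n-1)a^2+a\,\Delta_0f .
\]
Substitute the reduced Bochner formula $\Ric^0(\grad f,\grad f)=-|\Hess^0f|^2-\grad f(\Delta_0f)$ and integrate. The term $a\int_M\Delta_0 f\,d\mu_g$ is zero, so
\[
\int_M\Ric^{df}(\grad f,\grad f)\,d\mu_g=-\int_M|\Hess^0f|^2\,d\mu_g-\int_M\grad f(\Delta_0f)\,d\mu_g+2(n-1)a^2\vol(M).
\]
Solving for $\vol(M)$ gives the first identity; this uses $a\neq0$ and $n\ge2$.

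\emph{Second formula.} Use \eqref{eq:bc-Hessdf}, $\Hess^{df}f=\Hess^0f+a\,g+2\,df\otimes df$, and expand the squared norm:
\[
|\Hess^{df}f|^2=|\Hess^0f|^2+na^2+4a^2+2a\,\Delta_0f+4\Hess^0f(\grad f,\grad f)+4a^2 .
\]
The cross term $4\Hess^0f(\grad f,\grad f)$ again vanishes, leaving $|\Hess^0f|^2+(n+8)a^2+2a\Delta_0f$. Since $a$ is constant, $\Delta_{df}f=\Delta_0f+(n+2)a$ gives $\grad f(\Delta_{df}f)=\grad f(\Delta_0f)$.

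Add the three integrands and use the expression for $\Ric^{df}(\grad f,\grad f)$ found above. The Bochner identity cancels $|\Hess^0f|^2+\grad f(\Delta_0f)$ against $\Ric^0(\grad f,\grad f)$. What remains pointwise is
\[
(n+8)a^2+2(n-1)a^2+3a\Delta_0f=3(n+2)a^2+3a\Delta_0f .
\]
Integrating kills the $\Delta_0 f$ term and yields $3(n+2)a^2\vol(M)$, which is the second identity.

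\emph{Main obstacle.} There is no conceptual difficulty; the risk is bookkeeping. The critical step is the expansion of $|\Hess^{df}f|^2$. Its cross terms must be counted correctly: $\langle g,df\otimes df\rangle=a$, $|df\otimes df|^2=a^2$, and $\langle\Hess^0f,df\otimes df\rangle=0$. Only with this count do the coefficients combine to exactly $3(n+2)$. One must also consistently use the sign convention $\Delta_0=+\tr\Hess$ in Bochner's formula.
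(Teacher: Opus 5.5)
Your proposal is correct and follows the same route the paper indicates. You evaluate \eqref{eq:bc-Ricdf} on $(\grad f,\grad f)$ and insert the reduced Bochner identity for the first formula. For the second, you rewrite $|\Hess^{df}f|^2$, $\grad f(\Delta_{df}f)$ and $\Ric^{df}(\grad f,\grad f)$ in Levi-Civita terms, so that Bochner cancels, $(n+8)+2(n-1)=3(n+2)$ remains, and every $|\grad f|^2\Delta_0 f$ term integrates to zero.

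As an aside that does not affect your argument: on a compact $M$ without boundary every smooth $f$ attains a maximum, where $\grad f=0$. So $|\grad f|^2$ can never be a non-zero constant, and the standing hypothesis of this subsection is in fact vacuous.
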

(The Levi-Civita analogue is $\int_M|\Hess^0f|^2+\int_M\grad f(\Delta_0f)+\int_M\Ric^0(\grad f,\grad f)=0$.)

\begin{proposition}[\cite{BlagaChen2021}]\label{prop:bc-vol-sol}
Under the same assumptions:
\begin{enumerate}[label=(\roman*),itemsep=0pt]
\item if $(\nabla^0,Q^{df},\grad f,\lambda)$ is a gradient almost soliton, then
\begin{eqnarray*}
\vol(M)&=&\frac1{2(n-1)|\grad f|^4}\Big\{\int_M|\Hess^0f|^2d\mu_g+|\grad f|^2\!\int_M\lambda\,d\mu_g\\
&&+n\!\int_M\grad f(\lambda)\,d\mu_g-\int_M\grad f(\scal^0)\,d\mu_g\Big\};
\end{eqnarray*}
\item if $(\nabla^{df},Q^0,\grad f,\lambda)$ is a gradient soliton with $\lambda\ne3|\grad f|^2$, then
\[
\vol(M)=\frac1{(\lambda-3|\grad f|^2)|\grad f|^2}\Big\{-\int_M|\Hess^0f|^2d\mu_g+\int_M\grad f(\scal^0)\,d\mu_g\Big\}.
\]
\end{enumerate}
\end{proposition}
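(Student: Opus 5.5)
The plan is to derive both volume formulas from two pieces of information: the soliton equation evaluated on the pair $(\grad f,\grad f)$, and its trace. We then substitute these into either the first formula of Proposition~\ref{prop:bc-vol} or the integrated Levi-Civita Bochner identity. Write $\zeta=\grad f$ and $\theta=df$. Throughout we use the standing facts of this subsection: $|\zeta|^2$ is a non-zero constant, $\Hess^0f(\zeta,\zeta)=0$, and the integrated Bochner formula
$$\int_M\Ric^0(\zeta,\zeta)\,d\mu_g=-\int_M|\Hess^0f|^2d\mu_g-\int_M\zeta(\Delta_0f)\,d\mu_g .$$

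For (i), the soliton $(\nabla^0,Q^{df},\zeta,\lambda)$ reads $\nabla^0\zeta+Q^{df}=\lambda I$, that is, $\Hess^0f+\Ric^{df}=\lambda g$.
\begin{itemize}
\item Evaluating at $(\zeta,\zeta)$ and using $\Hess^0f(\zeta,\zeta)=0$ gives $\langle\Ric^{df},df\otimes df\rangle=\Ric^{df}(\zeta,\zeta)=\lambda|\zeta|^2$. This produces the term $|\grad f|^2\int_M\lambda$.
\item Tracing gives $\Delta_0f+\scal^{df}=n\lambda$. By the formula $\scal^{df}=\scal^0+(n-1)(n+2)|\zeta|^2$, and since $|\zeta|^2$ is constant, differentiating along $\zeta$ yields $\zeta(\Delta_0f)=n\,\zeta(\lambda)-\zeta(\scal^0)$.
\end{itemize}
Substituting these two identities into the first formula of Proposition~\ref{prop:bc-vol} gives exactly the stated expression.

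For (ii), use $\nabla^{df}\zeta=\nabla^0\zeta+2\theta\otimes\zeta+|\zeta|^2I$. The soliton $(\nabla^{df},Q^0,\zeta,\lambda)$ with $\lambda$ constant becomes
$$\Hess^0f+2\,df\otimes df+|\zeta|^2g+\Ric^0=\lambda g .$$
\begin{itemize}
\item At $(\zeta,\zeta)$ this gives $2|\zeta|^4+|\zeta|^4+\Ric^0(\zeta,\zeta)=\lambda|\zeta|^2$, so $\Ric^0(\zeta,\zeta)=(\lambda-3|\zeta|^2)|\zeta|^2$, which is a constant.
\item Tracing gives $\Delta_0f+(n+2)|\zeta|^2+\scal^0=n\lambda$. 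Since $\lambda$ and $|\zeta|$ are constant, this yields $\zeta(\Delta_0f)=-\zeta(\scal^0)$.
\end{itemize}
Inserting both into the integrated Bochner identity gives
$$(\lambda-3|\zeta|^2)|\zeta|^2\vol(M)=-\int_M|\Hess^0f|^2+\int_M\zeta(\scal^0).$$
Dividing by the non-zero factor $(\lambda-3|\zeta|^2)|\zeta|^2$, which is where the hypothesis $\lambda\ne3|\grad f|^2$ enters, gives the claimed formula.

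The computations themselves are routine. The main obstacle is bookkeeping, on two fronts. First, the sign convention $\nabla\zeta+J=\lambda I$ of \eqref{eq:bc-soliton} must be used consistently. Second, in (i) the derivative of the constant shift between $\scal^{df}$ and $\scal^0$ must genuinely vanish. Before substituting, I would re-check that the first formula of Proposition~\ref{prop:bc-vol} is consistent: it follows from \eqref{eq:bc-Ricdf} at $(\zeta,\zeta)$, namely $\Ric^{df}(\zeta,\zeta)=\Ric^0(\zeta,\zeta)+2(n-1)|\zeta|^4+|\zeta|^2\Delta_0f$, together with $\int_M\Delta_0f=0$ and Bochner.
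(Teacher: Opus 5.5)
Your proposal is correct and follows essentially the same route as the paper. In both, you evaluate the soliton equation on $(\grad f,\grad f)$ and take its trace, then substitute into the first formula of Proposition~\ref{prop:bc-vol} for (i) and into the Levi-Civita Bochner identity for (ii), and your explicit steps (the vanishing of $\Hess^0f(\grad f,\grad f)$, the constant shift in $\scal^{df}$ differentiating to zero, and the final division using $\lambda\ne3|\grad f|^2$) are exactly what the paper's terse proof leaves implicit.
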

\begin{proof}
For (i), the equation reads $$\Hess^0f+\Ric^{df}=\lambda g,$$ so $$\langle\Ric^{df},df\otimes df\rangle=\lambda|\grad f|^2-\tfrac12\grad f(|\grad f|^2),$$ and its trace gives $$\Delta_0f+\scal^0+(n-1)(n+2)|\grad f|^2=n\lambda;$$ apply $\grad f$ to the latter and insert into Proposition~\ref{prop:bc-vol}. For (ii), use \eqref{eq:bc-Hessdf}: $$\Hess^{df}f+\Ric^0=\lambda g,$$ with trace $$\Delta_0f+\scal^0+(n+2)|\grad f|^2=n\lambda$$ and $$\Ric^0(\grad f,\grad f)=(\lambda-3|\grad f|^2)|\grad f|^2.$$
\end{proof}

\begin{remark}
In case (i), if $\lambda=2(n-1)|\grad f|^2$ and $n\ge3$ then $$\vol(M)=\frac1{(n-1)(n-2)|\grad f|^2}\int_M\scal^0d\mu_g;$$ if $\scal^0$ is constant and $f$ is $\Delta_0$-harmonic, then $$\scal^0=(n-1)(n-2)|\grad f|^2\ge0.$$ If $\lambda$ is constant and $$\lambda\ne2(n-1)|\grad f|^2,$$ then $$\vol(M)=\frac{1}{\{2(n-1)|\grad f|^2-\lambda\}|\grad f|^2}\big\{\int_M|\Hess^0f|^2d\mu_g-\int_M\grad f(\scal^0)d\mu_g\big\},$$ and the second integral drops out if $f$ is harmonic; in the Riemannian case this forces $$\lambda\le2(n-1)|\grad f|^2$$ and then $$\scal^0\le (n-1)(n-2)|\grad f|^2$$ (strictly if $\Hess^0f\not\equiv0$).
\end{remark}

Finally, eliminating the unknown function from the soliton equations bounds the norm of the Ricci tensor (solvability of a quadratic equation for $\lambda$ gives the two-sided estimates).

\begin{proposition}[\cite{BlagaChen2021}]\label{prop:bc-ricbounds}
\begin{enumerate}[label=(\roman*),itemsep=0pt]
\item If $(\nabla^0,Q^{df},\grad f,\lambda)$ is a gradient almost soliton, then
\begin{align*}
&(n-1)^2|\Hess^0f|^2+\tfrac{(n-1)(n-2)^2}{n}|\grad f|^4-\tfrac{(n-1)^2}{n}(\Delta_0f)^2\\
&\ \ +\tfrac{2(n-1)(n-2)}{n}|\grad f|^2\Delta_0f-2(n-1)(n-2)\Hess^0f(\grad f,\grad f)\ \le\ |\Ric^0|^2\\
&\ \le\ (n-1)^2|\Hess^0f|^2-\tfrac{(n-1)(n-2)^2}{n}|\grad f|^4+\tfrac1n(\scal^0)^2\\
&\ \ +\tfrac{2(n-2)}{n}|\grad f|^2\scal^0-2(n-2)\Ric^0(\grad f,\grad f).
\end{align*}
\item If $(\nabla^{df},Q^0,\grad f,\lambda)$ is a gradient almost soliton, then
\begin{align*}
&|\Hess^0f|^2+\tfrac{4(n-1)}n|\grad f|^4-\tfrac1n(\Delta_0f)^2-\tfrac4n|\grad f|^2\Delta_0f+4\Hess^0f(\grad f,\grad f)\\
&\ \le\ |\Ric^0|^2\ \le\ |\Hess^0f|^2-\tfrac{4(n-1)}n|\grad f|^4+\tfrac1n(\scal^0)^2\\
&\ \ +\tfrac4n|\grad f|^2\scal^0-4\Ric^0(\grad f,\grad f).
\end{align*}
\end{enumerate}
\end{proposition}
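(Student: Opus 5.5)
The plan is to solve each soliton equation for $\Ric^0$ in terms of $\Hess^0f$, $df\otimes df$, $g$ and the unknown function $\lambda$. Then compute $|\Ric^0|^2$ as a quadratic polynomial in $\lambda$, and use the fact that $\lambda$ is real to get the two-sided bound. The key identity is the trace decomposition: for a symmetric tensor $B$ one has $|B|^2\ge \frac1n(\tr B)^2$ (Riemannian case), with $\tr g=n$, $\langle g,\Hess^0f\rangle=\Delta_0f$, $\langle g,df\otimes df\rangle=|\grad f|^2$, $\langle\Hess^0f,df\otimes df\rangle=\Hess^0f(\grad f,\grad f)$ and $|df\otimes df|^2=|\grad f|^4$.

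For (i), insert $Q^{df}$ from \eqref{eq:bc-Ricdf} into $\nabla^0\grad f+Q^{df}=\lambda I$. This gives
\[
\Ric^0=\lambda' g+(n-1)\Hess^0f-(n-2)\,df\otimes df,\qquad \lambda':=\lambda-n|\grad f|^2-\Delta_0f.
\]
This is the rewriting behind Proposition~\ref{prop:bc-equivalences}(ii). Write $B:=(n-1)\Hess^0f-(n-2)df\otimes df$, so that $\Ric^0=\lambda'g+B$. Expanding gives
\[
|\Ric^0|^2=n\lambda'^2+2\lambda'\tr B+|B|^2,\qquad \scal^0=n\lambda'+\tr B.
\]
Completing the square in $\lambda'$ yields
\[
|\Ric^0|^2=\tfrac1n(\scal^0)^2+|B|^2-\tfrac1n(\tr B)^2.
\]
Two ways of reading this identity give the two inequalities. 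For the lower bound, I would view the right-hand side as $n(\lambda'+\tr B/n)^2+|B|^2-\frac1n(\tr B)^2$ and discard the nonnegative square. Expanding $|B|^2-\frac1n(\tr B)^2$ with the inner products above gives exactly the left-hand expression in (i): the terms $(n-1)^2|\Hess^0f|^2$, $-\frac{(n-1)^2}{n}(\Delta_0f)^2$, the $|\grad f|^4$ term with coefficient $(n-2)^2(1-\frac1n)$, the cross term $\frac{2(n-1)(n-2)}n|\grad f|^2\Delta_0f$, and $-2(n-1)(n-2)\Hess^0f(\grad f,\grad f)$.

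For the upper bound, I would eliminate $\Hess^0f$ instead of $\lambda$. Write $(n-1)\Hess^0f=\Ric^0-\lambda'g+(n-2)df\otimes df$ and take its norm squared. The result is a quadratic in $\lambda'$ whose leading coefficient is $n>0$; requiring it to be consistent, i.e.\ minimising over $\lambda'$, gives the second inequality. That minimisation produces $\frac1n(\scal^0)^2$, the cross terms $\frac{2(n-2)}n|\grad f|^2\scal^0$ and $-2(n-2)\Ric^0(\grad f,\grad f)$, and the $-\frac{(n-1)(n-2)^2}{n}|\grad f|^4$ term.

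For (ii), use $\nabla^{df}\zeta=\nabla^0\zeta+2\theta\otimes\zeta+|\zeta|^2I$. This gives
\[
\Ric^0=(\lambda-|\grad f|^2)g-\Hess^0f-2\,df\otimes df.
\]
Now run the same two eliminations with $B=-\Hess^0f-2df\otimes df$. The coefficients $4\Hess^0f(\grad f,\grad f)$, $\frac{4(n-1)}n|\grad f|^4$, $\frac4n|\grad f|^2\Delta_0f$, and in the upper bound $\frac4n|\grad f|^2\scal^0$ and $-4\Ric^0(\grad f,\grad f)$, come out directly.

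The main obstacle is the bookkeeping of signs and of which elimination produces which side. In particular, the claimed coefficients $-\frac{(n-1)(n-2)^2}n|\grad f|^4$ and $+\frac{4(n-1)}n|\grad f|^4$ must be checked against $|df\otimes df|^2=|\grad f|^4$ and $\tr(df\otimes df)=|\grad f|^2$. Also, in the pseudo-Riemannian case the inequality $|B|^2\ge\frac1n(\tr B)^2$ fails, so I would state the bounds for Riemannian $g$.
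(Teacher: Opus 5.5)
Your proof is correct and is essentially the paper's argument. The paper writes $\Ric^0=a\,g-(n-2)\,df\otimes df+(n-1)\Hess^0f$ and $(n-1)\Hess^0f=\Ric^0-a\,g+(n-2)\,df\otimes df$ (and the analogues in (ii)), then requires the discriminant of the resulting quadratic in $a$ to be nonnegative, which is exactly your completing-the-square/minimisation step.

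One correction: your restriction to Riemannian $g$ is unnecessary. You never actually use $|B|^2\ge\frac1n(\tr B)^2$. You only discard $n(\lambda'+\tr B/n)^2=\frac1n(\scal^0)^2$ for the lower bounds, and $\frac{(n-1)^2}{n}(\Delta_0f)^2$, resp.\ $\frac1n(\Delta_0f)^2$, for the upper bounds. Each of these is a square of a real function multiplied by $|g|^2=n$, which holds in any signature, so the bounds hold in the pseudo-Riemannian setting of the section as stated.
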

\begin{proof}
In (i) write $$\Ric^0=a\,g-(n-2)\,df\otimes df+(n-1)\Hess^0f$$ and $$(n-1)\Hess^0f=\Ric^0-a\,g+(n-2)\,df\otimes df$$ with $$a=\lambda-n|\grad f|^2-\Delta_0f,$$ and compute $|\Ric^0|^2$ and $|\Hess^0f|^2$ respectively; each is a quadratic polynomial in $a$ whose discriminant must be $\ge0$. Case (ii) is identical with $$a=\lambda-|\grad f|^2$$ and $$\Ric^0=a\,g-2\,df\otimes df-\Hess^0f.$$
\end{proof}

\begin{remark}
If $|\grad f|=1$ (a distance function in the sense of Petersen \cite{Petersen2006}) in case (ii), the two bounds force $$\Ric^0(\grad f,\grad f)=\frac1{4n}\left\{\big(\scal^0+2\big)^2+\big(\Delta_0f+2\big)^2-8n\right\}.$$ On the other hand the soliton equation gives $$\Ric^0(\grad f,\grad f)=\lambda-3$$ and $$\Delta_0f=n\lambda-(n+2)-\scal^0;$$ substituting, the resulting quadratic equation in $\lambda$ has discriminant $-4n^2(\scal^0)^2$, so a solution exists only if $\scal^0=0$. If moreover $f$ is harmonic, then $$\lambda=\frac{n+2}n$$ and $$\Ric^0(\grad f,\grad f)=-\frac{2(n-1)}n<0.$$
\end{remark}

\section{Statistical Warped Products}\label{sec:warped}
Let $(N_1,g_1)$ and $(N_2,g_2)$ be (pseudo-)Riemannian manifolds and $f>0$ a smooth function on $N_1$. The \emph{warped product} \cite{BishopONeill1969} $\tN=N_1\times_fN_2$ is $N_1\times N_2$ with metric $$\tg=\pi^*g_1+(f\circ\pi)^2\,\pi'^*g_2.$$ Warped products are ubiquitous in general relativity \cite{Chen2011,Chen2017}.
A dualistic structure on a warped product was described by Todjihounde \cite{Todjihounde2006}.

\begin{proposition}[\cite{Todjihounde2006}]\label{prop:dualwarp}
Let $(g_i,\nabla^{N_i},\nabla^{N_i*})$ be statistical structures on $N_i$ $(i=1,2)$. For $X,Y\in\chi(N_1)$ and $U,V\in\chi(N_2)$ define $D,D^*$ on $N_1\times N_2$ by
\begin{align*}
D_XY&=\nabla^{N_1}_XY, & D_XU&=D_UX=\tfrac{Xf}{f}\,U, & D_UV&=\nabla^{N_2}_UV-\tfrac{\tg(U,V)}{f}\grad f,
\end{align*}
and similarly for $D^*$ with the starred connections. Then $(\tg,D,D^*)$ is a statistical structure on $N_1\times_f N_2$.
\end{proposition}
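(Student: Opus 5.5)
We have to show that $D$ is torsion-free, that $(D_E\tg)(F,G)$ is totally symmetric, and that $D^*$ is the $\tg$-dual of $D$. The cleanest route is the difference-tensor characterisation \eqref{eq:T}. First we identify the Levi-Civita connection of the warped product: by the Bishop--O'Neill formulas, $\tn^0$ is given by exactly the displayed formulas with $\nabla^{N_i}$ replaced by the Levi-Civita connections $\nabla^{N_i,0}=\tfrac12(\nabla^{N_i}+\nabla^{N_i*})$. Here $X,Y$ are lifts from $N_1$, $U,V$ are lifts from $N_2$, and $\grad f$ is computed on $N_1$; its lift is the $\tg$-gradient because $f$ depends only on $N_1$.

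Subtracting, $D-\tn^0$ vanishes on the mixed pairs $(X,U)$ and $(U,X)$, since the terms $\tfrac{Xf}{f}U$ coincide, and likewise on the term $-\tfrac{\tg(U,V)}{f}\grad f$. On the remaining pairs we get
\[
\mathcal T(X,Y)=\mathcal T_1(X,Y),\qquad \mathcal T(U,V)=\mathcal T_2(U,V),\qquad \mathcal T(X,U)=\mathcal T(U,X)=0,
\]
where $\mathcal T_i=\nabla^{N_i}-\nabla^{N_i,0}$ are the difference tensors of the factors, lifted horizontally and vertically. In the same way one gets $D^*=\tn^0-\mathcal T$, since $\nabla^{N_i*}=\nabla^{N_i,0}-\mathcal T_i$.

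Next we check that $\mathcal T$ is tensorial and satisfies \eqref{eq:T}. Tensoriality holds because $\mathcal T_1$ depends only on horizontal data and $\mathcal T_2$ only on vertical data. Symmetry of $\mathcal T$ follows at once from the symmetry of $\mathcal T_1$ and $\mathcal T_2$ and the vanishing of the mixed terms. For the self-adjointness condition $\tg(\mathcal T(E,F),G)=\tg(F,\mathcal T(E,G))$ we split $E,F,G$ into horizontal and vertical parts. Every mixed combination vanishes, because $\mathcal T$ preserves the type and the two distributions are $\tg$-orthogonal. The purely horizontal case reduces to \eqref{eq:T} for $(g_1,\mathcal T_1)$. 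In the purely vertical case we have $\tg=f^2g_2$, and the common factor $f^2$ appears on both sides, so it reduces to \eqref{eq:T} for $(g_2,\mathcal T_2)$.

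Hence $D=\tn^0+\mathcal T$ is a statistical connection with dual $\tn^0-\mathcal T=D^*$, as stated after \eqref{eq:T}. The only real obstacle is the first step. One must make sure that the formulas in the statement are consistent with the Bishop--O'Neill Levi-Civita formulas, in particular the sign and normalisation of $-\tfrac{\tg(U,V)}{f}\grad f$; on vertical fields this equals $-f\,g_2(U,V)\grad f$, which is the standard term. One must also confirm that $D_UV$ is understood as the lift of $\nabla^{N_2}_UV$ plus that horizontal term. Once these are matched, everything else is a type-by-type check.
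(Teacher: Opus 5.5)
Your proposal is correct, with no gap. The paper gives no proof of its own; it simply quotes the result from \cite{Todjihounde2006}. The natural comparison is therefore with the direct verification on lifts. There one checks three things:
\begin{enumerate}
\item Torsion-freeness of $D$ and $D^*$, pair by pair, using $[X,U]=0$ and the symmetry of the term $\tg(U,V)\grad f$.
\item The duality identity $E\,\tg(F,G)=\tg(D_EF,G)+\tg(F,D^*_EG)$ for every type combination. For instance, both sides equal $2f(Xf)g_2(U,V)$ when $(E,F,G)=(X,U,V)$. Both sides vanish for $(U,X,V)$, because $f(Xf)g_2(U,V)$ coming from $D_UX$ cancels against $-f\,g_2(U,V)(Xf)$ coming from the $\grad f$ term in $D^*_UV$.
\item The Codazzi property, which then follows automatically: two torsion-free $\tg$-dual connections always have totally symmetric $D\tg$.
\end{enumerate}

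You instead push all of the warping into the Bishop--O'Neill Levi-Civita connection. This reduces the claim to the algebraic conditions \eqref{eq:T} for $\mathcal T=D-\tn^0$, which you show is block-diagonal, $\mathcal T=\mathcal T_1\oplus\mathcal T_2$.

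Your route buys a structural description. The warping function never enters the verification, and with $\pi_1,\pi_2$ the projections one reads off immediately that $D\tg=\pi_1^*(\nabla^{N_1}g_1)+f^2\,\pi_2^*(\nabla^{N_2}g_2)$.

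It also settles a point the direct computation leaves implicit, provided you tighten the tensoriality step. Rather than the remark about ``horizontal and vertical data'', define $\mathcal T(E,F)$ outright as the horizontal lift of $\mathcal T_1(d\pi_1E,d\pi_1F)$ plus the vertical lift of $\mathcal T_2(d\pi_2E,d\pi_2F)$. Then the stated formulas, which only prescribe $D$ on lifts, are seen to define a genuine connection, namely $\tn^0+\mathcal T$.

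The direct check, by contrast, is self-contained. It does not rely on O'Neill's formulas or on the difference-tensor characterisation of statistical structures.
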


The curvature and Ricci tensors of $(\tN,D,D^*,\tg)$ were computed in \cite{Todjihounde2006,AytimurOzgur2018}:

\begin{lemma}\label{lem:Ricwarp}
For $X,Y\in\chi(N_1)$ and $U,V\in\chi(N_2)$,
\begin{enumerate}[label=(\alph*),itemsep=0pt]
\item $\widetilde\Ric(X,Y)=\Ric_1(X,Y)-\dim(N_2)\,f^{-1}\Hess f(X,Y)$;
\item $\widetilde\Ric(X,V)=0$;
\item $\widetilde\Ric(U,V)=\Ric_2(U,V)-\big[f\,\Delta f+(\dim N_2-1)\|\grad f\|^2\big]g_2(U,V)$,
\end{enumerate}
where $\Delta f$ is the Laplacian of $f$ computed from the connection in question; analogous formulas hold for $\widetilde\Ric^*$.
\end{lemma}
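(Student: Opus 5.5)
The plan is the standard O'Neill-type computation, adapted to the non-metric connection $D$ of Proposition~\ref{prop:dualwarp}. First compute the curvature $\widetilde\Rie(E,F)G=D_ED_FG-D_FD_EG-D_{[E,F]}G$ on lifts, splitting into cases by whether each argument is horizontal ($X,Y,Z\in\chi(N_1)$) or vertical ($U,V,W\in\chi(N_2)$). Use the identities $[X,U]=0$ and $D_XU=D_UX=(Xf/f)U$. Use also that $\grad f$ is horizontal, so $D_U\grad f=(\grad f(f)/f)U=(\|\grad f\|^2/f)U$. For the terms $D_X\grad f$, set $\Hess f(X,Y)=g_1(\nabla^{N_1}_X\grad f,Y)$, computed with the connection in question. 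Then trace over an adapted frame $\{e_1,\dots,e_{m_1},u_1,\dots,u_{m_2}\}$ with $u_j=f^{-1}\bar u_j$, $\bar u_j$ $g_2$-orthonormal. This yields (a)--(c); the starred formulas follow by the duality convention, replacing every connection by its dual.

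For (a) I would compute $\widetilde\Rie(e_i,X)Y=\Rie_1(e_i,X)Y$, since $D$ restricted to horizontal fields is $\nabla^{N_1}$. Next I would compute $\widetilde\Rie(U,X)Y$. We have $D_XY=\nabla^{N_1}_XY$ horizontal, so $D_UD_XY=\big((\nabla^{N_1}_XY)f/f\big)U$. Also $D_XD_UY=D_X\big((Yf/f)U\big)=X(Yf/f)U+(Yf/f)(Xf/f)U$. Subtracting gives $\widetilde\Rie(U,X)Y=-f^{-1}\Hess f(X,Y)\,U$, the $(Xf)(Yf)/f^2$ terms cancelling. Summing the $U$-coefficient over $m_2$ vertical directions gives $-m_2f^{-1}\Hess f(X,Y)$.

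For (b), each term of $\widetilde\Rie(E,X)V$ has a horizontal/vertical type that pairs to zero under the trace. For example, $\widetilde\Rie(e_i,X)V$ is vertical, so it has no $e_i$-component. Likewise $\widetilde\Rie(u_j,X)V$ has its $u_j$-component built from $\tg(u_j,V)$ times horizontal data, and it is in fact horizontal.

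For (c) I would first compute $\widetilde\Rie(e_i,U)V$. Its $e_i$-component involves $D_{e_i}\big(\tg(U,V)f^{-1}\grad f\big)$ and $D_U\big((e_if/f)V\big)$. Here one needs $e_i\,\tg(U,V)=2(e_if/f)\tg(U,V)$, valid because $\tg(U,V)=f^2g_2(U,V)$ on lifts. Summing these components gives $-\tg(U,V)f^{-1}\Delta f=-f\Delta f\,g_2(U,V)$. The vertical traces $\widetilde\Rie(u_j,U)V$ reproduce $\Rie_2$ plus the terms $\tg(U,V)\|\grad f\|^2/f^2$ coming from $D_{u_j}D_UV$. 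After tracing, these terms contribute $-(m_2-1)\|\grad f\|^2g_2(U,V)$. Note that $f^{-2}\tg=g_2$ absorbs the normalisation of the $u_j$.

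The main obstacle is bookkeeping in (c). Because $D$ is not metric, one must be careful about which connection differentiates $\tg(U,V)$. The derivative along horizontal $X$ holds because $X$ does not change $g_2$-data on lifts. The identity $D_U\grad f=(\|\grad f\|^2/f)U$ uses $D_UX=(Xf/f)U$ with $X=\grad f$, where $\grad f$ is the Levi-Civita gradient. The Laplacian arises as $\tr\Hess f$ for $\nabla^{N_1}$, which is the sense intended in the statement. I would check the sign of the $\|\grad f\|^2$ term against the Riemannian case $\nabla^{N_i}=\nabla^{0}$, where the lemma must reduce to O'Neill's formula.
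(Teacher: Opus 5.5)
Your strategy is the standard one behind the cited formulas: compute the curvature of $D$ on lifts case by case, then trace over an adapted frame with $u_j=f^{-1}\bar u_j$. Parts (b) and (c) go through essentially as you describe. The gap is in (a). Subtracting correctly gives
\[
\widetilde\Rie(U,X)Y=-\frac1f\big(X(Yf)-(\nabla^{N_1}_XY)f\big)U .
\]
You then identify $X(Yf)-(\nabla^{N_1}_XY)f$ with your $\Hess f(X,Y)=g_1(\nabla^{N_1}_X\grad f,Y)$. That step silently uses $\nabla^{N_1}g_1=0$, which fails for a statistical connection. By the duality relation \eqref{eq:dual} on $N_1$,
\[
X(Yf)=X\,g_1(Y,\grad f)=g_1(\nabla^{N_1}_XY,\grad f)+g_1(Y,\nabla^{N_1*}_X\grad f).
\]
Writing $\nabla^{N_1}=\nabla^0+\mathcal T_1$ as in \eqref{eq:T}, this gives
\[
X(Yf)-(\nabla^{N_1}_XY)f=g_1(\nabla^{N_1*}_X\grad f,Y)=g_1(\nabla^{N_1}_X\grad f,Y)-2\,g_1(\mathcal T_1(X,Y),\grad f).
\]
So with your definition, (a) comes out with the \emph{dual} Hessian. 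Your claimed formula holds only when $g_1(\mathcal T_1(X,Y),\grad f)=0$.

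This is not cosmetic, because it decouples (a) from (c). In (c) the term $D_{e_i}\grad f=\nabla^{N_1}_{e_i}\grad f$ genuinely appears. The Laplacian there is therefore
\[
\dv^{\nabla^{N_1}}(\grad f)=\sum_i g_1(\nabla^{N_1}_{e_i}\grad f,e_i)=\tr_{g_1}\Hess^0f+g_1(\tr_{g_1}\mathcal T_1,\grad f).
\]
The Hessian forced by (a), by contrast, is $\nabla^{N_1}df$, whose trace is $\tr_{g_1}\Hess^0f-g_1(\tr_{g_1}\mathcal T_1,\grad f)$. So no single ``Hessian of the connection in question'' both makes (a) true and has the $\Delta f$ of (c) as its trace, unless $g_1(\tr_{g_1}\mathcal T_1,\grad f)=0$. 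Your proposed check against the Riemannian case cannot catch this, since the two Hessians coincide when $\mathcal T_1=0$.

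To repair the proof, state the conventions explicitly. In (a), take $\Hess f=\nabla^{N_1}df$, equivalently $g_1(\nabla^{N_1*}_X\grad f,Y)$. In (c), take $\Delta f=\dv^{\nabla^{N_1}}(\grad f)$, which is the convention $\Delta=\dv\circ\grad$ of Section~\ref{sec:gradient}. For $\widetilde\Ric^*$, the roles of $\nabla^{N_1}$ and $\nabla^{N_1*}$ are interchanged in both places.
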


\section{Statistical Solitons on Statistical Warped Products}\label{sec:warpsol}

We consider the two basic one-dimensional situations, with $\R$ equipped with its trivial statistical structure $(\nabla^{\R},dz^2)$.

\subsection{The case $\tN=\R\times_fN_2$}
Let $\dim N_2=k$. Lemma~\ref{lem:Ricwarp} gives
\begin{equation}\begin{aligned}\label{eq:RicR}
&\widetilde\Ric(\partial_z,\partial_z)=-k\,\frac{f''}{f},\qquad \widetilde\Ric(\partial_z,V)=0,
\\& \widetilde\Ric(U,V)=\Ric_2(U,V)-\big[ff''+(k-1)f'^2\big]g_2(U,V).
\end{aligned}\end{equation}

\begin{proposition}\label{prop:Hess}
Let $\zeta=(\partial_z,\zeta_2)$ and let $(\zeta,\lambda)$ be a statistical soliton on $\R\times_fN_2$. Then, projecting the soliton equation on $\partial_z\otimes\partial_z$,
\[
\lambda=k\,\frac{f''}{f},\qquad\text{equivalently}\qquad \Hess f=\frac{\lambda}{k}\,f\,dz^2 .
\]
\end{proposition}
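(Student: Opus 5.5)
We evaluate both soliton equations \eqref{eq:statsol} on the pair $(\partial_z,\partial_z)$ and use the warped-product structure of Proposition~\ref{prop:dualwarp} together with \eqref{eq:RicR}.

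\emph{Step 1: the connection term.} Write $\zeta=\partial_z+\zeta_2$, with $\zeta_2$ tangent to $N_2$. Here $X=\partial_z$ and $\R$ carries the trivial structure, so $D_{\partial_z}\partial_z=\nabla^{\R}_{\partial_z}\partial_z=0$. The mixed rule gives $D_{\partial_z}\zeta_2=\frac{f'}{f}\zeta_2$, plus $\nabla^0$-type derivative terms if $\zeta_2$ depends on $z$. Any such terms are tangent to the fibre, so they are $\tg$-orthogonal to $\partial_z$. Hence
$$\tg(D_{\partial_z}\zeta,\partial_z)=\tg(D_{\partial_z}\partial_z,\partial_z)=0.$$
The same computation for $D^*$ gives $0$, because the starred connection on $\R$ is again trivial. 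This is where the hypothesis that the $\R$-component of $\zeta$ is exactly $\partial_z$ is used. If the component were $a(z)\partial_z$, an extra term $a'$ would appear. I will state explicitly that this constant component is what the normalisation $\zeta=(\partial_z,\zeta_2)$ means.

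\emph{Step 2: the Ricci term.} By \eqref{eq:RicR} we have $\widetilde\Ric(\partial_z,\partial_z)=-k f''/f$. By Lemma~\ref{lem:Ricwarp} the dual Ricci tensor satisfies the analogous formula $\widetilde\Ric^*(\partial_z,\partial_z)=-k f''/f$. Again this holds because $\nabla^{\R*}=\nabla^{\R}$, so the Hessian of $f$ on the base is the ordinary second derivative for both connections. Also $\tg(\partial_z,\partial_z)=1$.

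\emph{Step 3: conclusion and main obstacle.} Substituting into $\tg(D_E\zeta,F)+\widetilde\Ric(E,F)+\lambda\tg(E,F)=0$ with $E=F=\partial_z$ gives
$$0-k\frac{f''}{f}+\lambda=0,$$
so $\lambda=k f''/f$. The $D^*$ equation gives the same identity, so the two equations are consistent. Since $f$ depends only on $z$, its Hessian on $\R$ is $f''\,dz^2$. Therefore $\Hess f=\frac{\lambda}{k}f\,dz^2$.

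The only delicate point is the sign convention. With the convention of \eqref{eq:statsol}, the relation obtained is $\lambda=k f''/f$, which matches the statement. The main obstacle is justifying that the fibre component $\zeta_2$ contributes nothing to the $\partial_z\otimes\partial_z$ projection. This follows because, by Proposition~\ref{prop:dualwarp}, every $D$- or $D^*$-derivative of a fibre field along $\partial_z$ stays in the fibre, and is therefore orthogonal to $\partial_z$.
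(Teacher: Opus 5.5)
Your proposal is correct and follows the paper's argument: you evaluate \eqref{eq:statsol} on $(\partial_z,\partial_z)$, observe that the connection term vanishes, and read off $\lambda=kf''/f$ from \eqref{eq:RicR}. You are more careful than the paper's one-line proof, which cites only $D_{\partial_z}\partial_z=0$ and leaves three points implicit: that $D_{\partial_z}\zeta_2$ is fibre-tangent and hence $\tg$-orthogonal to $\partial_z$, that the $\R$-component of $\zeta$ must be exactly $\partial_z$, and that the $D^*$-equation gives the same identity.
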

\begin{proof}
Since $D_{\partial_z}\partial_z=0$, the component of \eqref{eq:statsol} along $(\partial_z,\partial_z)$ reduces to $\widetilde\Ric(\partial_z,\partial_z)+\lambda=0$, and the claim follows from \eqref{eq:RicR}.
\end{proof}

Since $\lambda$ is constant, Proposition~\ref{prop:Hess} says that $f''=(\lambda/k)f$. Solving this linear ODE gives the following classification of the warping function.

\begin{theorem}\label{thm:Rf}
Let $(\zeta,\lambda)$ be a statistical soliton on $\tN=\R\times_fN_2$ with $\zeta=(\partial_z,\zeta_2)$. Then:
\begin{enumerate}[label=(\alph*),itemsep=0pt]
\item $\lambda=kf''/f$ is constant;
\item if $\lambda=0$, then $f(z)=az+b$;
\item if $\lambda>0$, then $f$ is a combination of $e^{\pm\sqrt{\lambda/k}\,z}$, in particular $$f(z)=c\cosh\left(\sqrt{\lambda/k}\,z+b\right)$$ for suitable constants $b,c$; if $\lambda<0$, then $f$ is a combination of $\cos\big(\sqrt{|\lambda|/k}\,z\big)$ and $\sin\big(\sqrt{|\lambda|/k}\,z\big)$;
\item the fibre carries the equation
\[
\nabla^{N_2}\zeta_2+\Ric_2+(k-1)\big(ff''-f'^2\big)g_2\ \text{(+ lower-order terms, see Remark~\ref{rem:fullwarp})}=0 ,
\]
and its dual; consequently $(N_2,g_2,\zeta_2,\lambda_2)$ with $$\lambda_2=(k-1)(ff''-f'^2)$$ is a statistical soliton whenever this quantity is constant.
\end{enumerate}
\end{theorem}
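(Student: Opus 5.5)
The plan is to decompose the soliton system \eqref{eq:statsol} on $\tN=\R\times_fN_2$ into its three block components $(\partial_z,\partial_z)$, $(\partial_z,V)$ and $(U,V)$. We use the connection $D$ of Proposition~\ref{prop:dualwarp} for the first equation and $D^*$ for the dual one, and read off the Ricci terms from \eqref{eq:RicR}. Parts (a)--(c) come from the $(\partial_z,\partial_z)$ block and part (d) from the fibre block.

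For (a) we invoke Proposition~\ref{prop:Hess} directly. Since $\lambda$ is a constant in Definition~\ref{def:statsol}, the identity $\lambda=kf''/f$ holds pointwise with a constant left side. A small point should be checked here. The $\partial_z$-component of $\zeta$ is the constant $1$, so $D_{\partial_z}\zeta$ has $\partial_z$-part $0$. The term $D_{\partial_z}\zeta_2=(f'/f)\zeta_2$ is tangent to the fibre, so it does not contribute to $\tg(D_{\partial_z}\zeta,\partial_z)$. This confirms the reduction $\widetilde\Ric(\partial_z,\partial_z)+\lambda=0$ used there, up to the sign convention, which we take as in Proposition~\ref{prop:Hess}.

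For (b)--(c), $f$ depends only on $z$ and so solves the constant-coefficient ODE $f''=(\lambda/k)f$. The characteristic roots are $\pm\sqrt{\lambda/k}$. If $\lambda=0$ we get $f=az+b$. If $\lambda>0$ the solutions are combinations of $e^{\pm\sqrt{\lambda/k}z}$; when both coefficients are positive this combination can be rewritten as $c\cosh(\sqrt{\lambda/k}\,z+b)$, which is the ``in particular'' case compatible with $f>0$ on all of $\R$. If $\lambda<0$ we get trigonometric solutions. Positivity of $f$ then restricts the domain, and this should be noted but does not affect the statement.

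For (d) we evaluate the $D$-equation on $U,V$ tangent to $N_2$. Proposition~\ref{prop:dualwarp} gives
\[D_U\zeta=D_U\partial_z+D_U\zeta_2=\tfrac{f'}{f}U+\nabla^{N_2}_U\zeta_2-\tfrac{\tg(U,\zeta_2)}{f}f'\partial_z.\]
Pairing with $V$ via $\tg=dz^2+f^2g_2$ gives $ff'g_2(U,V)+f^2g_2(\nabla^{N_2}_U\zeta_2,V)$. Adding $\widetilde\Ric(U,V)$ from \eqref{eq:RicR} and $\lambda f^2g_2(U,V)$, and substituting $\lambda f^2=kff''$ from (a), produces a fibre equation of the form $\nabla^{N_2}\zeta_2+\Ric_2+\mu\,g_2=0$. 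Here $\mu$ is an explicit expression in $f,f',f''$; collecting the $f''$ and $f'^2$ terms yields the combination $(k-1)(ff''-f'^2)$ together with the residual terms flagged as ``lower order''. These include the $ff'$ term and the rescaling of $\nabla^{N_2}\zeta_2$ by $f^2$. The dual equation follows identically with $D^*$, $\nabla^{N_2*}$ and $\Ric_2^*$.

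The mixed block $(\partial_z,V)$ gives a consistency condition on $\zeta_2$, since $\widetilde\Ric(\partial_z,V)=0$. It does not enter the conclusion. Finally, when the coefficient is constant, the two fibre equations are exactly \eqref{eq:statsol} on $N_2$ with constant $\lambda_2$. Ricci-symmetry of $N_2$ is inherited from that of $\tN$ by the block structure, so $(\zeta_2,\lambda_2)$ is a statistical soliton.

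The main obstacle is the bookkeeping in (d): matching the $f^2$ factors and the $ff'$ term against the precise form claimed. Because the statement explicitly defers the extra terms to Remark~\ref{rem:fullwarp}, I would present the computation honestly, isolate the $(k-1)(ff''-f'^2)$ coefficient, and record the remaining terms rather than force them to vanish.
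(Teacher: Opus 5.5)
Your route is the paper's. You get (a) from the $(\partial_z,\partial_z)$ block via Proposition~\ref{prop:Hess}, and your check that $\tg(D_{\partial_z}\zeta,\partial_z)=0$ is correct. You get (b)--(c) by solving $f''=(\lambda/k)f$, and (d) from the fibre block after substituting $\lambda f^2=kff''$, with the dual from $D^*$. Your evaluation $\tg(D_U\zeta,V)=ff'g_2(U,V)+f^2g_2(\nabla^{N_2}_U\zeta_2,V)$ is correct. It is more explicit than the paper's proof, which only records the coefficient identity $\lambda f^2-\big(ff''+(k-1)f'^2\big)=(k-1)(ff''-f'^2)$ and defers the remaining terms to Remark~\ref{rem:fullwarp}.

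The step that does not hold as written is your last one. By your own computation the fibre block is
\[
f^2\,g_2(\nabla^{N_2}_U\zeta_2,V)+\Ric_2(U,V)+\big[(k-1)(ff''-f'^2)+ff'\big]\,g_2(U,V)=0,
\]
so constancy of $\lambda_2$ does not make it ``exactly'' \eqref{eq:statsol} on $N_2$: the factor $f^2$ and the term $ff'g_2$ are still there. Constancy of $\lambda_2$ is in fact automatic, since $(ff''-f'^2)'=ff'''-f'f''=0$ because $f''=(\lambda/k)f$. The paper's proof takes the same shortcut, so you are not missing an idea the paper supplies.

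If you want the final clause actually proved, the mixed block you set aside does the work. It gives $ff'\,g_2(\zeta_2,\cdot)=0$, so $\zeta_2=0$ on any interval where $f'\neq0$. There, since $\Ric_2$ and $g_2$ do not depend on $z$, $ff'$ must be constant, i.e.\ $f'^2+(\lambda/k)f^2=0$. Differentiating this and comparing with $f''=(\lambda/k)f$ gives $\lambda=0$ and then $f'=0$, a contradiction. Hence $f$ is constant and $\lambda=\lambda_2=0$. The fibre equations then reduce to $f^2g_2(\nabla^{N_2}_U\zeta_2,V)+\Ric_2(U,V)=0$ and its dual. This is the claimed soliton $(\zeta_2,0)$ on $(N_2,g_2)$ when $f\equiv1$, and in general a soliton for the rescaled field $f^2\zeta_2$. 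In particular, the non-constant warping functions in (b)--(c) never actually occur, although (a)--(c) remain true as necessary conditions.
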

\begin{proof}
Part (a) is Proposition~\ref{prop:Hess}. Eliminating $\lambda$ from the fibre component with $$\lambda f^2=kff''$$ yields $$\lambda f^2-\big(ff''+(k-1)f'^2\big)=(k-1)(ff''-f'^2),$$ which is (d). The same computation with $D^*$ and $\Delta^{*}$ gives the dual equation. Finally (b)--(c) follow by solving $$f''=(\lambda/k)f.$$
\end{proof}

\begin{remark}\label{rem:fullwarp}
The two projections used above are the \emph{base} and \emph{pure-fibre} components of the soliton equation. For completeness, we note that the full tensorial equation also contains the term coming from $$D_U\partial_z=(f'/f)U$$ and the mixed terms coming from $D_U\zeta_2$ (which has a $\partial_z$-component $-f f' g_2(U,\zeta_2)$). A direct computation shows that they contribute $f f' g_2$ to the fibre equation and force $$f'\,g_2(\cdot,\zeta_2)=0.$$ Thus, Theorem~\ref{thm:Rf} should be understood as giving necessary conditions on $f$ from the base direction and the main part of the fibre equation; to obtain a complete classification one has to retain these additional terms, which impose strong rigidity (e.g.\ on hyperbolic space $\R\times_{e^z}\R^k$ the field $\partial_z$ satisfies the base equation with $\lambda=k$ but the fibre equation with $\lambda=k-1$). We think that a complete treatment of the mixed terms in the statistical setting is an interesting open problem.
\end{remark}

\subsection{The case $\tN=N_1\times_f\R$}

\begin{lemma}\label{lem:N1R}
Let $\dim N_1=k$. For $X,Y\in\Gam(TN_1)$, we have
$$\widetilde\Ric(X,Y)=\Ric_1(X,Y)-f^{-1}\Hess f(X,Y),$$ $$\widetilde\Ric(X,\partial_z)=0$$ and $$\widetilde\Ric(\partial_z,\partial_z)=-f\,\Delta f.$$
\end{lemma}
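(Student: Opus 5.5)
The plan is to specialise Lemma~\ref{lem:Ricwarp} to the case where the fibre is $N_2=\R$ with its trivial statistical structure $(\nabla^{\R},dz^2)$, so that $\dim N_2=1$. This is the same specialisation used for \eqref{eq:RicR}, but with the roles of base and fibre exchanged.

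First, for $X,Y\in\Gam(TN_1)$, part (a) of Lemma~\ref{lem:Ricwarp} with $\dim N_2=1$ gives directly
\[
\widetilde\Ric(X,Y)=\Ric_1(X,Y)-f^{-1}\Hess f(X,Y).
\]
Here $\Hess f$ is taken with respect to $\nabla^{N_1}$ (respectively $\nabla^{N_1*}$ for the dual Ricci tensor). Second, the mixed component $\widetilde\Ric(X,\partial_z)=0$ is exactly part (b).

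Third, for the fibre component I take $U=V=\partial_z$ in part (c). On $(\R,dz^2)$ with the flat connection we have $\Ric_2=0$, and $g_2(\partial_z,\partial_z)=1$. The coefficient $(\dim N_2-1)\|\grad f\|^2$ vanishes because $\dim N_2=1$. What remains is
\[
\widetilde\Ric(\partial_z,\partial_z)=-f\,\Delta f,
\]
where $\Delta f$ is the Laplacian on $N_1$ associated with the connection in question.

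As a consistency check, I would verify the fibre component directly from Proposition~\ref{prop:dualwarp}. The relevant derivatives are $D_{\partial_z}\partial_z=-f\grad f$ (using $\tg(\partial_z,\partial_z)=f^2$) and $D_X\partial_z=(Xf/f)\partial_z$. Tracing $\widetilde{\Rie}(E,\partial_z)\partial_z$ over an orthonormal frame $\{e_i\}$ of $N_1$ gives
\[
\widetilde\Ric(\partial_z,\partial_z)=\sum_i g_1\big(\widetilde{\Rie}(e_i,\partial_z)\partial_z,e_i\big).
\]
Computing $\widetilde{\Rie}(e_i,\partial_z)\partial_z=-f\,D_{e_i}\grad f$ modulo terms that cancel then yields $-f\sum_i g_1(D_{e_i}\grad f,e_i)=-f\Delta f$.

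The main obstacle is the sign and normalisation conventions. The normalisation $\tg(\partial_z,\partial_z)=f^2$ versus $g_2(\partial_z,\partial_z)=1$ determines whether the answer is $-f\Delta f$ or $-f^{-1}\Delta f$. Lemma~\ref{lem:Ricwarp}(c) is written in terms of $g_2$, which gives the stated form. The other issue is that the trace convention for $\Delta$ must be the non-metric one used in Lemma~\ref{lem:Ricwarp}. The dual formulas follow by replacing $\nabla^{N_1}$ with $\nabla^{N_1*}$.
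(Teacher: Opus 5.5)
Your proposal is correct and is essentially the paper's argument: the paper gives no separate proof, treating Lemma~\ref{lem:N1R} as the case $N_2=\R$, $\dim N_2=1$, $\Ric_2=0$, $g_2(\partial_z,\partial_z)=1$ of Lemma~\ref{lem:Ricwarp} (just as for \eqref{eq:RicR}), and your direct check $\widetilde{\Rie}(X,\partial_z)\partial_z=-f\,D_X\grad f$ is also right, with the two terms $\pm(Xf)\grad f$ cancelling exactly. One point your check makes precise is worth stating explicitly. The $\Delta f$ in the third formula is $\sum_i g_1(\nabla^{N_1}_{e_i}\grad f,e_i)$, i.e.\ the trace of the \emph{dual} Hessian $XYf-(\nabla^{N_1*}_XY)f$, whereas the Hessian in the first formula is $XYf-(\nabla^{N_1}_XY)f$. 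So $\Delta f$ equals the $g_1$-trace of that first Hessian only when $\grad f$ is $g_1$-orthogonal to the trace of the difference tensor of $(N_1,\nabla^{N_1},g_1)$.
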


\begin{theorem}\label{thm:N1R}
Let $\zeta=(\zeta_1,\partial_z)$ on $\tN=N_1\times_f\R$ and suppose $\Hess f=0$. Then, on horizontal vectors, the soliton equation \eqref{eq:statsol} for $(\zeta,\lambda)$ reduces to
\begin{equation}\label{eq:reduced}
\nabla^{N_1}\zeta_1+\Ric_1+\lambda\,g_1=0,\qquad \nabla^{N_1*}\zeta_1+\Ric_1^*+\lambda\,g_1=0,
\end{equation}
that is, $(\zeta_1,\lambda)$ is a statistical soliton on $N_1$, and the soliton constant is $\lambda=f^{-1}\Delta f$. Conversely, a statistical soliton $(\zeta_1,\lambda)$ on $N_1$ with $\lambda=f^{-1}\Delta f$ solves the horizontal part of the soliton equation on $\tN$.
\end{theorem}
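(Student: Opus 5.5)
The plan is to evaluate the two soliton equations \eqref{eq:statsol} on pairs of horizontal vectors $X,Y\in\Gam(TN_1)$, and to use the warped-product structure of Proposition~\ref{prop:dualwarp} and Lemma~\ref{lem:N1R}.

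\emph{Connection term.} Write $\zeta=\zeta_1+\partial_z$. Since $D_X\zeta_1=\nabla^{N_1}_X\zeta_1$ and $D_X\partial_z=\frac{Xf}{f}\partial_z$, we get
\[
\tg(D_X\zeta,Y)=g_1(\nabla^{N_1}_X\zeta_1,Y)+\tfrac{Xf}{f}\,\tg(\partial_z,Y).
\]
The second term vanishes because $\partial_z\perp TN_1$. The same holds with $D^*$, so on horizontal pairs the connection part of \eqref{eq:statsol} is exactly $\nabla^{N_1}\zeta_1$, respectively $\nabla^{N_1*}\zeta_1$.

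\emph{Ricci term.} By Lemma~\ref{lem:N1R} and its dual,
\[
\widetilde\Ric(X,Y)=\Ric_1(X,Y)-f^{-1}\Hess f(X,Y).
\]
The hypothesis $\Hess f=0$ removes the correction term. Here $\Hess f$ is taken for each of the relevant connections: $\nabla^{N_1}$, $\nabla^{N_1*}$, or equivalently $\nabla^0$. For the dual statement I would either assume $\Hess f$ vanishes for both, or note that
\[
\Hess^* f-\Hess f=2\,g(\mathcal T(\cdot,\cdot),\grad f),
\]
and state the assumption accordingly. The horizontal components of \eqref{eq:statsol} then read exactly as \eqref{eq:reduced}, which proves the forward direction. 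The converse is the same identity read backwards.

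\emph{Identifying $\lambda$.} I would use the vertical component $(\partial_z,\partial_z)$ of \eqref{eq:statsol}. We have $D_{\partial_z}\partial_z=-\tfrac{\tg(\partial_z,\partial_z)}{f}\grad f=-f\grad f$ and $D_{\partial_z}\zeta_1=\frac{\zeta_1 f}{f}\partial_z$. These give
\[
\tg(D_{\partial_z}\zeta,\partial_z)=f\,\zeta_1(f)-f\,\tg(\grad f,\partial_z)=f\,\zeta_1(f).
\]
By Lemma~\ref{lem:N1R}, $\widetilde\Ric(\partial_z,\partial_z)=-f\Delta f$, and $\lambda\tg(\partial_z,\partial_z)=\lambda f^2$. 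So the vertical equation is
\[
f\,\zeta_1(f)-f\Delta f+\lambda f^2=0,\qquad\text{i.e.}\qquad \lambda=f^{-1}\Delta f-f^{-1}\zeta_1(f).
\]
This is the main obstacle. The claimed value $\lambda=f^{-1}\Delta f$ requires the extra term $\zeta_1(f)$ to drop out. Note that $\Hess f=0$ already gives $\Delta f=0$, which makes $\lambda=f^{-1}\Delta f$ formally read $\lambda=0$ for the Levi-Civita choice. I would therefore track signs and conventions (the Laplacian sign flip flagged earlier in the paper) and check whether the intended reading is that $\zeta_1(f)=0$, for example $\zeta_1\perp\grad f$. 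Failing that, I would present $\lambda=f^{-1}\Delta f$ as the normalization forced by the vertical equation once $\zeta_1(f)=0$. Everything else is routine bookkeeping of the warped-product connection formulas.
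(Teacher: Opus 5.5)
Your treatment of the horizontal components is exactly the paper's argument. Since $D_X\partial_z=\frac{Xf}{f}\partial_z$ is vertical, $\tg(D_X\partial_z,Y)=0$. The Hessian correction in Lemma~\ref{lem:N1R} drops out, so the horizontal part of \eqref{eq:statsol} is \eqref{eq:reduced}, and the converse is the same identity read backwards. Your $(\partial_z,\partial_z)$ computation is also correct, and it is more careful than the paper's proof. The paper reads $\lambda=f^{-1}\Delta f$ off this component and Lemma~\ref{lem:N1R}, but it silently discards the term $\tg(D_{\partial_z}\zeta,\partial_z)=f\,\zeta_1(f)$ coming from $D_{\partial_z}\zeta_1=\frac{\zeta_1 f}{f}\partial_z$.

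Where your proposal falls short is that you leave this discrepancy open and offer to add $\zeta_1(f)=0$ as an extra hypothesis. The idea you are missing is the \emph{mixed} component of the soliton equation. Suppose $(\zeta,\lambda)$ satisfies \eqref{eq:statsol} on all of $\tN$, and take $E=X$ horizontal and $F=\partial_z$. Since $\widetilde\Ric(X,\partial_z)=0$ and $\tg(X,\partial_z)=0$, the equation reduces to
\[
\tg(D_X\zeta,\partial_z)=\tfrac{Xf}{f}\,\tg(\partial_z,\partial_z)=f\,X(f)=0,
\]
and the same holds for $D^*$, because $D^*_X\partial_z=\frac{Xf}{f}\partial_z$ as well. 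As $f>0$ and $N_1$ is connected, $f$ is constant. Hence $\zeta_1(f)=0$ and $\Delta f=0$, and your vertical equation gives $\lambda=0=f^{-1}\Delta f$.

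Under this reading the statement is true, but only in a degenerate form. The warped product is a direct product, the hypothesis $\Hess f=0$ is automatic, and the base soliton is steady. This is consistent with the remark following Corollary~\ref{cor:N1R}.

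If instead one imposes only the horizontal and $(\partial_z,\partial_z)$ components and not the mixed ones, the formula $\lambda=f^{-1}\Delta f$ is false in general. Your value $\lambda=f^{-1}\bigl(\Delta f-\zeta_1(f)\bigr)$ is then the correct one. Your write-up should state which of these two results it proves, rather than treating $\lambda=f^{-1}\Delta f$ as a normalization.
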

\begin{proof}
For horizontal $X,Y$ one has $\widetilde\Ric(X,Y)=\Ric_1(X,Y)$ because $\Hess f=0$, and $D_X\partial_z=(Xf/f)\partial_z$ is orthogonal to horizontal directions. Hence \eqref{eq:statsol} restricted to $TN_1$ is exactly \eqref{eq:reduced}. The relation $\lambda=f^{-1}\Delta f$ comes from the $(\partial_z,\partial_z)$-component and Lemma~\ref{lem:N1R}. The dual equation is obtained by replacing $D,\Ric_1,\Delta$ by $D^*,\Ric_1^*,\Delta^*$.
\end{proof}

\begin{corollary}\label{cor:N1R}
If $\Hess f=\varpi\,g_1$ for a function $\varpi\in C^\infty(N_1)$, then the horizontal equation yields that $(N_1,g_1,\zeta_1,\,f^{-1}(\Delta f-\varpi))$ is a statistical soliton.
\end{corollary}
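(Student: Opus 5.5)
The plan is to specialise Theorem~\ref{thm:N1R} and keep track of the one term that no longer vanishes. That theorem used $\Hess f=0$ only to replace $\widetilde\Ric(X,Y)$ by $\Ric_1(X,Y)$ on horizontal vectors. Under the weaker hypothesis $\Hess f=\varpi\,g_1$, Lemma~\ref{lem:N1R} instead gives
\[
\widetilde\Ric(X,Y)=\Ric_1(X,Y)-f^{-1}\varpi\,g_1(X,Y),\qquad X,Y\in\Gam(TN_1),
\]
and the analogous formula for $\widetilde\Ric^*$, with $\Hess^*f$ in place of $\Hess f$.

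For the covariant-derivative term I will use Proposition~\ref{prop:dualwarp}. For horizontal $X$ we have $D_X\zeta=D_X\zeta_1+D_X\partial_z=\nabla^{N_1}_X\zeta_1+(Xf/f)\,\partial_z$. The second summand is vertical, so it drops out when we pair with a horizontal $Y$. Substituting into the horizontal component of \eqref{eq:statsol} gives
\[
g_1(\nabla^{N_1}_X\zeta_1,Y)+\Ric_1(X,Y)+\big(\lambda-f^{-1}\varpi\big)g_1(X,Y)=0,
\]
and the same identity for the starred objects.

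Next I determine $\lambda$ from the $(\partial_z,\partial_z)$-component, exactly as in Theorem~\ref{thm:N1R}. Since $D_{\partial_z}\partial_z=-f\,\grad f$ is horizontal, its pairing with $\partial_z$ vanishes. By Lemma~\ref{lem:N1R} this component then reads $-f\Delta f+\lambda f^2=0$, that is, $\lambda=f^{-1}\Delta f$. Inserting this gives the horizontal soliton constant $\lambda-f^{-1}\varpi=f^{-1}(\Delta f-\varpi)$, which is the claim. The dual equation follows by the same computation with $D^*$.

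The step I expect to be the main obstacle is consistency between the two halves. The function $f^{-1}(\Delta f-\varpi)$ need not be constant, so the conclusion has to be read as an almost-type statistical soliton on $N_1$. Moreover, the starred computation produces $\Delta^*f$ and $\varpi^*$ with $\Hess^*f=\varpi^*g_1$. To state a single soliton function I need $f^{-1}(\Delta f-\varpi)=f^{-1}(\Delta^*f-\varpi^*)$. My first attempt is to note that both expressions equal $\lambda-f^{-1}\varpi$ and $\lambda-f^{-1}\varpi^*$ for the same $\lambda$, so it suffices to interpret the hypothesis $\Hess f=\varpi g_1$ as holding for both Hessians with the same $\varpi$. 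If that interpretation is not available, I would state it as an explicit additional assumption.
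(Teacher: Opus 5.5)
Your route is the paper's: the corollary is read off from Theorem~\ref{thm:N1R} by keeping the term $-f^{-1}\varpi\,g_1$ that Lemma~\ref{lem:N1R} now contributes to $\widetilde\Ric$ on horizontal vectors. The horizontal equation then has soliton function $\lambda-f^{-1}\varpi$, and one inserts $\lambda=f^{-1}\Delta f$. Your horizontal computation is correct.

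The $(\partial_z,\partial_z)$ step is not correct, although the paper's proof of Theorem~\ref{thm:N1R} makes the same omission. Besides $D_{\partial_z}\partial_z=-f\grad f$, the field $D_{\partial_z}\zeta$ contains $D_{\partial_z}\zeta_1=(\zeta_1f/f)\,\partial_z$ by Proposition~\ref{prop:dualwarp}. This term is vertical and contributes $\tg(D_{\partial_z}\zeta_1,\partial_z)=f\,\zeta_1(f)$. The component therefore reads
\[
f\,\zeta_1(f)-f\Delta f+\lambda f^2=0,\qquad\text{i.e.}\qquad \lambda=f^{-1}\big(\Delta f-\zeta_1(f)\big),
\]
and the horizontal equation yields the soliton function $f^{-1}\big(\Delta f-\varpi-\zeta_1(f)\big)$. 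The stated value $f^{-1}(\Delta f-\varpi)$ is thus correct only under the additional hypothesis $\zeta_1(f)=0$, i.e.\ $\zeta_1\perp\grad f$. The statement assumes this tacitly, and you should state it explicitly. If you instead impose the full tensorial equation rather than only these two components, the situation degenerates. The mixed $(X,\partial_z)$-component reduces to $\tg(D_X\zeta,\partial_z)=f\,X(f)=0$, because $\widetilde\Ric(X,\partial_z)=0$ and $\tg(X,\partial_z)=0$. This forces $f$ to be constant, and the corollary collapses to $\varpi=\Delta f=\lambda=0$.

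Your two ``obstacles'' are milder than you fear.
\begin{itemize}
\item The $D$- and $D^*$-versions of the $(\partial_z,\partial_z)$-component involve the same $\lambda$ and the same term $f\,\zeta_1(f)$, since also $D^*_{\partial_z}\zeta_1=(\zeta_1f/f)\,\partial_z$. Subtracting them gives $\Delta f=\Delta^*f$.
\item Read the hypothesis for both Hessians, $\Hess f=\varpi g_1$ and $\Hess^*f=\varpi^*g_1$; the dual horizontal equation needs this anyway to be of soliton type. Taking traces gives $k\varpi=k\varpi^*$ with $k=\dim N_1$, so $\varpi=\varpi^*$ is automatic rather than an extra assumption.
\item When $\zeta_1(f)=0$, the quantity $\lambda=f^{-1}\Delta f=k\varpi/f$ is the constant of the soliton on $\tN$. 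Hence $f^{-1}(\Delta f-\varpi)=\tfrac{k-1}{k}\lambda$ is constant, and the conclusion is a genuine statistical soliton on $N_1$. An almost-type reading is forced only by the term $-f^{-1}\zeta_1(f)$ when it is present.
\end{itemize}
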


\begin{remark}
When $\Hess f=0$, one has $\Delta f=0$, so the statement of Theorem~\ref{thm:N1R} describes steady-type solitons on the base; Corollary~\ref{cor:N1R} allows non-vanishing constants through the trace-free perturbation $\varpi g_1$. Note also that a statistical soliton on the base lifts to the full warped product only if the remaining mixed components (which involve $\zeta_1 f$ and $df$) are compatible, cf.\ Remark~\ref{rem:fullwarp}.
\end{remark}

\section{Kenmotsu Statistical Manifolds}\label{sec:kenmotsu}
The results of Sections~\ref{sec:xi}--\ref{sec:yamabe} follow the approach of \cite{Ahmadini2024}. Kenmotsu \cite{Kenmotsu1972} identified a class of almost contact metric manifolds $(B,\phi,\xi,\eta,g)$, characterised by
\[
\nabla^g_E\xi=E-\eta(E)\xi,\qquad (\nabla^g_E\phi)F=g(\phi E,F)\xi-\eta(F)\phi E ,
\]
that arise as warped products $\R\times_{e^{\alpha}}\widetilde B$ of a line with a K\"ahler manifold. Furuhata, Hasegawa, Okuyama and Sato \cite{Furuhata2017} introduced the statistical counterpart.

\begin{definition}[\cite{Furuhata2017}]
Let $(B,\nabla^g,g,\phi,\xi)$ be a Kenmotsu manifold. A statistical structure $\nabla=\nabla^g+\mathcal T$ on $B$ is a \emph{Kenmotsu statistical structure} if
\begin{equation}\label{eq:Kstat}
\mathcal T(E,\phi F)=-\phi\,\mathcal T(E,F)\qquad\text{for all }E,F.
\end{equation}
\end{definition}

\begin{proposition}[\cite{Furuhata2017}]\label{prop:Kconstr}
Let $(\widetilde B,\nabla^{\widetilde B}=\nabla^{G}+\mathcal T,G,J)$ be a holomorphic statistical manifold and $B=\widetilde B\times\R$ with $g=e^{2\alpha}G+d\alpha^2$, $\xi=\partial_\alpha$. For $\beta\in C^\infty(B)$ define
\[
\mathcal T(E_2,F_2)=\mathcal T(E_2,F_2)\ (\text{on }\widetilde B),\qquad\mathcal T(E_2,\xi)=\mathcal T(\xi,E_2)=0,\qquad\mathcal T(\xi,\xi)=\beta\xi .
\]
Then $(\nabla=\nabla^g+\mathcal T,g,\phi,\xi)$ is a Kenmotsu statistical structure on $B$.
\end{proposition}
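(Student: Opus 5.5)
To prove Proposition~\ref{prop:Kconstr} we check directly the three conditions on the tensor $\mathcal T$ defined on $B=\widetilde B\times\R$. First, $\nabla=\nabla^g+\mathcal T$ is a statistical structure. Second, $\mathcal T$ satisfies the Kenmotsu compatibility \eqref{eq:Kstat}. Third, the underlying $(B,\phi,\xi,\eta,g)$ is Kenmotsu. By \eqref{eq:T}, the first amounts to showing that $\mathcal T$ is symmetric and that $g(\mathcal T(E,F),G)$ is totally symmetric; torsion-freeness of $\nabla$ then follows from that of $\nabla^g$. For the third we take the standard Kenmotsu structure on the warped product $\R\times_{e^{\alpha}}\widetilde B$, with metric $g=d\alpha^2+e^{2\alpha}G$, $\eta=d\alpha$ and $\phi$ the horizontal lift of $J$ with $\phi\xi=0$. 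This is Kenmotsu's classical construction \cite{Kenmotsu1972}, so we assume it. We write every vector field as $E=E_2+\eta(E)\xi$, with $E_2$ tangent to $\widetilde B$, and extend $\mathcal T$ bilinearly over $C^\infty(B)$.

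\textbf{Symmetry and total symmetry.} Symmetry of $\mathcal T$ in its two arguments follows from the symmetry of $\mathcal T^{\widetilde B}$ together with $\mathcal T(E_2,\xi)=\mathcal T(\xi,E_2)=0$. For total symmetry we expand
\[
g(\mathcal T(E,F),G)=e^{2\alpha}G\big(\mathcal T^{\widetilde B}(E_2,F_2),G_2\big)+\beta\,\eta(E)\eta(F)\eta(G).
\]
Here we use that $\mathcal T^{\widetilde B}$ takes values tangent to $\widetilde B$, and that the $\xi$-component of $\mathcal T(E,F)$ is $\beta\eta(E)\eta(F)$. The first term is totally symmetric because $G(\mathcal T^{\widetilde B}(\cdot,\cdot),\cdot)$ is totally symmetric on $\widetilde B$, which is the statistical property of $(\widetilde B,\nabla^{\widetilde B},G)$. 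The conformal factor $e^{2\alpha}$ does not affect this symmetry. The second term is obviously totally symmetric. Hence $(\nabla,g)$ is statistical.

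\textbf{The Kenmotsu condition.} Since $\phi\xi=0$ and $\phi E=JE_2$ (horizontal), we have $\mathcal T(E,\phi F)=\mathcal T^{\widetilde B}(E_2,JF_2)$, because the $\xi$-parts contribute nothing. On the other side, $\phi\mathcal T(E,F)=J\mathcal T^{\widetilde B}(E_2,F_2)+\beta\eta(E)\eta(F)\phi\xi=J\mathcal T^{\widetilde B}(E_2,F_2)$. Condition \eqref{eq:Kstat} therefore reduces to $\mathcal T^{\widetilde B}(E_2,JF_2)=-J\mathcal T^{\widetilde B}(E_2,F_2)$, which is exactly the defining property of a holomorphic statistical manifold \cite{Furuhata2009}.

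\textbf{Expected obstacle.} The main subtlety is not algebraic. We must make sure that the lift of $\mathcal T^{\widetilde B}$ is a well-defined tensor on $B$. The issue is that $\widetilde B$-tangent fields may depend on $\alpha$; this is handled by pointwise bilinear extension, since $\mathcal T^{\widetilde B}$ is tensorial. We must also fix the convention for the horizontal lift of $J$, so that $\phi$ commutes with the projection to $\widetilde B$. Once these identifications are made explicit, the remaining verifications are the pointwise identities above.
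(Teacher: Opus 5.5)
Your proof is correct. The paper gives no argument of its own here; it simply quotes \cite{Furuhata2017}. Your direct pointwise check is the standard verification: total symmetry of $g(\mathcal T(E,F),W)=e^{2\alpha}G(\mathcal T(E_2,F_2),W_2)+\beta\,\eta(E)\eta(F)\eta(W)$, and reduction of \eqref{eq:Kstat} to $\mathcal T(E_2,JF_2)=-J\mathcal T(E_2,F_2)$ because $\phi\xi=0$ kills the $\beta$-term.

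The last identification deserves one more line if a holomorphic statistical manifold is defined by $\nabla^{\widetilde B}\omega=0$ with $\omega(Y,Z)=G(Y,JZ)$. Then $\nabla^G\omega=0$ and the $G$-self-adjointness of $\mathcal T_X$ give $(\nabla^{\widetilde B}_X\omega)(Y,Z)=-G\big(Y,\mathcal T(X,JZ)+J\mathcal T(X,Z)\big)$, so the two conditions are indeed equivalent.
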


For such a structure one obtains, from $$\nabla^g_E\xi=E-\eta(E)\xi$$ and $$\mathcal T(E,\xi)=\beta\eta(E)\xi,$$
\begin{equation}\label{eq:nablaxi}
\nabla_E\xi=E+(\beta-1)\eta(E)\xi,\qquad
\nabla^*_E\xi=E-(\beta+1)\eta(E)\xi .
\end{equation}
The dual statement is obtained from the first one by $\beta\mapsto-\beta$. Throughout the next sections $(B,\nabla,g,\phi,\xi)$ is as in Proposition~\ref{prop:Kconstr}, $N$ is a submanifold of dimension $s$ and $\beta$ is constant.

\section{Statistical Solitons on Submanifolds: the Reeb Field}\label{sec:xi}

\subsection{$\xi$ tangent to $N$}
Let $\xi$ be tangent to $N$ and consider the pair $(\xi,\lambda)$. By the Gauss formula \eqref{eq:gauss} and \eqref{eq:nablaxi}, comparing tangential and normal parts,
\begin{equation}\label{eq:xitan}
\nabla_E\xi=E+(\beta-1)\eta(E)\xi,\qquad h(E,\xi)=0 .
\end{equation}
Inserting this into the $\nabla$-Ricci soliton equation $g(\nabla_E\xi,F)+\Ric(E,F)+\lambda g(E,F)=0$ gives
\begin{equation}\label{eq:etaEin}
\Ric(E,F)=-(\lambda+1)\,g(E,F)+(1-\beta)\,\eta(E)\eta(F).
\end{equation}

\begin{theorem}\label{thm:etaEin}
If $(g,\xi,\lambda)$ is a statistical soliton on a submanifold $N$ of a Kenmotsu statistical manifold $(B,\nabla,g,\xi)$ and $\xi$ is tangent to $N$, then $N$ is an $\eta$-Einstein manifold. The same holds for $(B,\nabla^*,g,\xi)$ with $\beta$ replaced by $-\beta$, and $h^*(E,\xi)=0$.
\end{theorem}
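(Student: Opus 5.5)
The plan is to derive the $\eta$-Einstein form of $\Ric$ directly from the $\nabla$-component of the statistical soliton equation \eqref{eq:statsol} restricted to $N$. Afterwards we treat the dual component in the same way, with $\beta\mapsto-\beta$.

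\emph{Step 1 (decomposing $\tn\xi$ along $N$).} Since $\xi$ is tangent to $N$, the Gauss formula \eqref{eq:gauss} gives $\tn_E\xi=\nabla_E\xi+h(E,\xi)$ for $E\in\Gam(TN)$. By \eqref{eq:nablaxi}, the ambient derivative $\tn_E\xi=E+(\beta-1)\eta(E)\xi$ is tangent to $N$. Comparing tangential and normal components therefore yields \eqref{eq:xitan}: $\nabla_E\xi=E+(\beta-1)\eta(E)\xi$ and $h(E,\xi)=0$.

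\emph{Step 2 (the $\nabla$-equation).} We insert this into the induced $\nabla$-Ricci soliton equation $g(\nabla_E\xi,F)+\Ric(E,F)+\lambda g(E,F)=0$ on $N$. This gives $g(E,F)+(\beta-1)\eta(E)\eta(F)+\Ric(E,F)+\lambda g(E,F)=0$, which is \eqref{eq:etaEin}: $\Ric=-(\lambda+1)g+(1-\beta)\eta\otimes\eta$. Here $\lambda$ is a constant and $\beta$ is constant by the standing assumption of Section~\ref{sec:kenmotsu}, so $N$ is $\eta$-Einstein. Note that $\eta\otimes\eta$ is symmetric, so this is consistent with the Ricci-symmetry required in Definition~\ref{def:statsol}.

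\emph{Step 3 (the dual equation).} We repeat the computation with the dual Gauss formula $\tn^*_E\xi=\nabla^*_E\xi+h^*(E,\xi)$ and $\nabla^*_E\xi=E-(\beta+1)\eta(E)\xi$ from \eqref{eq:nablaxi}. This gives $h^*(E,\xi)=0$ and $\nabla^*_E\xi=E-(\beta+1)\eta(E)\xi$, hence $\Ric^*=-(\lambda+1)g+(1+\beta)\eta\otimes\eta$, which is the statement with $\beta$ replaced by $-\beta$.

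The only point requiring care, rather than a real obstacle, is the interpretation: we must check that the soliton equation on $N$ is the one formed with the induced connections $\nabla,\nabla^*$ and their Ricci tensors. We must also confirm that $\Ric$ and $\Ric^*$ are then simultaneously $\eta$-Einstein with constant coefficients, which needs the constancy of $\beta$. If one wants the statement also for the statistical Ricci tensor $\frac12(\Ric+\Ric^*)$, averaging the two formulas gives $-(\lambda+1)g+\eta\otimes\eta$, which is again $\eta$-Einstein.
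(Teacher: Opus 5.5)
Your proposal is correct and follows essentially the same route as the paper: the Gauss formula \eqref{eq:gauss} combined with \eqref{eq:nablaxi} gives \eqref{eq:xitan}, and substituting into the $\nabla$-soliton equation yields \eqref{eq:etaEin}; the identical computation with $\tn^*$ then gives $h^*(E,\xi)=0$ and $\Ric^*=-(\lambda+1)g+(1+\beta)\,\eta\otimes\eta$. (Strictly, constancy of $\beta$ is needed only if you insist on constant coefficients in the definition of $\eta$-Einstein.)
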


\paragraph{The soliton constant.}
Using \eqref{eq:xitan} one computes (torsion-free $\nabla$, constant $\beta$)
\[
\Rie(E,F)\xi=(\beta-1)\big[\eta(F)E-\eta(E)F\big]+(\beta-1)\,d\eta(E,F)\,\xi ,
\]
and taking the trace gives $$\Ric(\xi,\xi)=(\beta-1)(s-1).$$ On the other hand, \eqref{eq:etaEin} gives $$\Ric(\xi,\xi)=-\lambda-\beta.$$ Hence
\begin{equation}\label{eq:lambda}
\lambda=(s-1)(1-\beta)-\beta,\qquad\text{and for }\nabla^*:\quad \lambda^{*}=(s-1)(1+\beta)+\beta .
\end{equation}
\begin{theorem}\label{thm:sign}
Under the hypotheses of Theorem~\ref{thm:etaEin} with $\beta$ constant, $(\xi,\lambda)$ is simultaneously a $\nabla$- and $\nabla^*$-Ricci soliton only if $\lambda=\lambda^*$, i.e.\ $s\beta=0$; thus $\beta=0$ (the difference tensor satisfies $\mathcal T(\xi,\xi)=0$) and $\lambda=s-1$.
\end{theorem}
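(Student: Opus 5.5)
The proof just matches the two soliton constants that the preceding computations produce. A statistical soliton in the sense of Definition~\ref{def:statsol} uses one constant $\lambda$ in both the $\nabla$- and the $\nabla^*$-equation. We first derive, separately for each connection, the value that constant is forced to take, and then require the two values to agree.

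For $\nabla$, the tangential part \eqref{eq:xitan} of the Gauss formula gives $\nabla_E\xi=E+(\beta-1)\eta(E)\xi$. Substituting this into the $\nabla$-soliton equation yields the $\eta$-Einstein form \eqref{eq:etaEin}, and evaluating at $E=F=\xi$ gives $\Ric(\xi,\xi)=-\lambda-\beta$. The curvature computation preceding \eqref{eq:lambda} gives a second, intrinsic value. There $\Rie(E,F)\xi$ is computed from $\nabla_E\xi$ using torsion-freeness and the constancy of $\beta$. The $d\eta$-term is proportional to $\xi$ and contracts to zero against $\eta$ in the trace, so only $(\beta-1)[\eta(F)E-\eta(E)F]$ contributes, giving $\Ric(\xi,\xi)=(\beta-1)(s-1)$. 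Equating the two values gives $\lambda=(s-1)(1-\beta)-\beta$.

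For $\nabla^*$ we repeat the argument verbatim. The dual of \eqref{eq:nablaxi} restricted to $N$ reads $\nabla^*_E\xi=E-(\beta+1)\eta(E)\xi$, together with $h^*(E,\xi)=0$; this is just the substitution $\beta\mapsto-\beta$ allowed by the Conventions and Theorem~\ref{thm:etaEin}. The same two evaluations of $\Ric^*(\xi,\xi)$ then give $\lambda^*=(s-1)(1+\beta)+\beta$.

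Since the constant is shared, we need $\lambda=\lambda^*$. Their difference is
\[
\lambda^*-\lambda=2(s-1)\beta+2\beta=2s\beta ,
\]
so $s\beta=0$. Because $s=\dim N\ge1$, this forces $\beta=0$. Then $\mathcal T(\xi,\xi)=\beta\xi=0$ by Proposition~\ref{prop:Kconstr}, and substituting $\beta=0$ gives $\lambda=s-1$.

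The main obstacle is checking the trace $\Ric(\xi,\xi)=(\beta-1)(s-1)$. One has to confirm that the terms coming from $\nabla_{[E,F]}\xi$ and from differentiating $\eta$ really reduce to the stated expression. Here $\nabla$ is not metric, so one should check that $E\eta(F)$ is handled through $\nabla^0$ or $\nabla$ consistently. One also has to confirm that the $\xi$-component, i.e.\ the $d\eta$-term, drops out of the trace. The identity $d\eta=0$ on Kenmotsu manifolds makes that term vanish outright, and I would invoke this fact first. The rest is elementary algebra.
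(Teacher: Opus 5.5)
Your proposal is correct and follows the paper's argument step for step. You compute $\Ric(\xi,\xi)=(\beta-1)(s-1)$ from $\Rie(E,F)\xi$, equate it with $-\lambda-\beta$ from \eqref{eq:etaEin}, dualise via $\beta\mapsto-\beta$, and compare $\lambda$ with $\lambda^*$ to get $2s\beta=0$. The obstacle you flag is harmless: $E\eta(F)-F\eta(E)-\eta([E,F])=d\eta(E,F)$ involves no connection at all, and the rank-one map $E\mapsto d\eta(E,\xi)\xi$ has trace $d\eta(\xi,\xi)=0$ by skew-symmetry, even without using $d\eta=0$ on Kenmotsu manifolds.
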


\begin{remark}
Formula \eqref{eq:lambda} shows that the sign of $\lambda$ depends on $\beta$ and $s$: for $\beta=0$ and $s\ge2$ the soliton is \emph{expanding}, whereas $\lambda<0$ for $\beta$ close to $1$. Thus there is no universal sign: the statement that statistical solitons with $\xi$ tangent are always shrinking requires additional hypotheses on $\beta$ (for instance $\beta>\frac{s-1}{s}$ makes $\lambda<0$ in the $\nabla$-equation).
\end{remark}

\subsection{$\xi$ normal to $N$}
If $\xi\perp N$ then $\eta(E)=0$ for tangent $E$ and \eqref{eq:nablaxi} reduces to $\tn_E\xi=E$. Comparing with the Weingarten formula \eqref{eq:weingarten},
\[
A_\xi=-I,\qquad A^{*}_\xi=-I,\qquad h(E,F)=-g(E,F)\,\xi,
\]
i.e.\ $N$ is totally umbilical with respect to $\xi$ (a horosphere-like situation, as in hyperbolic space). In this situation, the tangential component of $\xi$ vanishes, so the soliton condition for the potential field $\xi^{\top}=0$ degenerates to $\Ric=-\lambda g$ and $\Ric^*=-\lambda g$.

\begin{theorem}\label{thm:Einstein}
If $\xi$ is normal to $N\subset(B,\nabla,g,\xi)$ and the soliton condition is imposed with the tangential part of $\xi$ as a potential field, then $N$ is an Einstein manifold: $\Ric=-\lambda g$ (and likewise for $\nabla^*$). In particular, the soliton is shrinking, steady, or expanding according as the scalar curvature of $N$ is positive, zero, or negative, and $\lambda=-\mathrm{scal}/s$.
\end{theorem}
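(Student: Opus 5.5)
The plan is to reduce the statistical soliton equation on $N$ to a pure curvature identity, using that the potential field vanishes identically on $N$.

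First, since $\xi\perp N$, we have $\eta(E)=\tg(E,\xi)=0$ for every $E\in\Gam(TN)$. Then \eqref{eq:nablaxi} gives $\tn_E\xi=E=\tn^*_E\xi$. Comparing with the Weingarten formula \eqref{eq:weingarten}, the tangential parts give $A_\xi=A^*_\xi=-I$ and the normal parts give $D^\perp_E\xi=D^{\perp*}_E\xi=0$. By \eqref{eq:AhA}, $\tg(h(E,F),\xi)=g(A^*_\xi E,F)=-g(E,F)$, and similarly for $h^*$. So $N$ is $\xi$-umbilical, as stated in the text preceding the theorem. This fact is not needed for the Einstein conclusion itself. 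It only serves to identify $N$ geometrically and, via the Gauss equation \eqref{eq:GaussEq}, to express $\Ric$ in terms of ambient data if one wishes.

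Second, the potential is $\xi^\top$, the tangential projection of $\xi$ to $N$, and $\xi^\top\equiv0$. Hence $\nabla_E\xi^\top=\nabla^*_E\xi^\top=0$ for all $E$. Substituting into the tensorial form of \eqref{eq:statsol} on $N$,
\[
g(\nabla_E\xi^\top,F)+\Ric(E,F)+\lambda g(E,F)=0,\qquad g(\nabla^*_E\xi^\top,F)+\Ric^*(E,F)+\lambda g(E,F)=0,
\]
the first terms drop out and we get $\Ric=-\lambda g$ and $\Ric^*=-\lambda g$. So $N$ is Einstein for both $\nabla$ and $\nabla^*$, and hence also for the statistical Ricci tensor $\tfrac12(\Ric+\Ric^*)$.

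Third, we trace. Taking $\tr_g$ of $\Ric=-\lambda g$ over an orthonormal frame of $N$ gives $\scal=-s\lambda$, that is, $\lambda=-\scal/s$. Here $\scal$ is the trace of the Ricci tensor, which agrees with the statistical scalar curvature up to the normalisation factor $2$ coming from $\sum_{i<j}$. Since the convention is $\lambda<0$ shrinking, $\lambda=0$ steady and $\lambda>0$ expanding, the sign trichotomy follows directly: $\scal>0$ gives a shrinking soliton, $\scal=0$ a steady one, and $\scal<0$ an expanding one. One also checks that $\scal$ is constant, because $\lambda$ is.

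The main obstacle is not computational. It is making precise which scalar curvature is meant, since $\sigma^{\nabla,\nabla^*}$ differs from $\tr\Ric$ by a factor and by the symmetrisation. I would state explicitly that $\scal:=\tr_g\Ric=\tr_g\Ric^*$, which holds here because both equal $-s\lambda$. I would also note the consistency requirement: the $\nabla$ and $\nabla^*$ equations share the same $\lambda$, which forces $\Ric=\Ric^*$, so $N$ is conjugate Ricci-symmetric. This is the only nontrivial content contributed by demanding a statistical soliton rather than a single-connection soliton.
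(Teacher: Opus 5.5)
Your proposal is correct and follows the paper's argument: since $\xi^{\top}=0$, both soliton equations reduce to $\Ric=\Ric^*=-\lambda g$, and tracing gives $\lambda=-\scal/s$ together with the stated shrinking/steady/expanding trichotomy. Your extra remarks are accurate refinements within the same route, not a different proof: the umbilicity computation is not needed, only the $\xi$-component of $h$ is actually determined, and $\tr_g\Ric=\tr_g\Ric^*=2\sigma^{\nabla,\nabla^*}$.
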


\section{Statistical Solitons with a Concircular Vector Field}\label{sec:concirc}
Let $v$ be a concircular vector field on $(B,\nabla,g)$ in the sense of \eqref{eq:concirc} and decompose $v=\vt+\vn$ along $N$. By \eqref{eq:gauss}--\eqref{eq:weingarten},
\[
\delta E=\nabla_E\vt+h(E,\vt)-A_{\vn}E+D^\perp_E\vn ,
\]
so comparing components,
\begin{equation}\label{eq:vtan}
\nabla_E\vt=A_{\vn}E+\delta E,\qquad h(E,\vt)=-D^{\perp}_E\vn,
\end{equation}
and dually $\nabla^*_E\vt=A^*_{\vn}E+\delta E$.

\begin{lemma}\label{lem:umb}
Let $v$ be concircular on $B$. Then $N$ is $\vn$-umbilical (with $A_{\vn}=f I$) if and only if $\vt$ is concircular on $N$ with respect to $\nabla$; in that case $\nabla_E\vt=(f+\delta)E$. 

The same holds for $\nabla^*$ with $A^*_{\vn}$.
\end{lemma}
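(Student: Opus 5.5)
The plan is to read the lemma straight off the tangential decomposition \eqref{eq:vtan}. Since $v$ is concircular on $B$, the Gauss formula \eqref{eq:gauss} and the Weingarten formula \eqref{eq:weingarten} give $\delta E=\tn_Ev=\nabla_E\vt+h(E,\vt)-A_{\vn}E+D^\perp_E\vn$. Taking the tangential part yields $\nabla_E\vt=A_{\vn}E+\delta E$ for every $E\in\Gam(TN)$. Everything else follows from this one identity.

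\emph{Forward direction.} If $N$ is $\vn$-umbilical with $A_{\vn}=fI$, substitute into the identity to get $\nabla_E\vt=(f+\delta)E$. This is the concircular condition on $N$ for $\nabla$, with function $f+\delta$.

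\emph{Converse.} Suppose $\nabla_E\vt=\rho E$ for some function $\rho$ on $N$. Then $A_{\vn}E=(\rho-\delta)E$ for all tangent $E$, so $A_{\vn}=fI$ with $f=\rho-\delta$, and the formula $\nabla_E\vt=(f+\delta)E$ is recovered.

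\emph{Dual statement.} Repeat the argument with $\tn^*$, $\nabla^*$, $h^*$, $A^*_{\vn}$ and $D^{\perp*}$, using the second half of \eqref{eq:concirc}. This gives $\nabla^*_E\vt=A^*_{\vn}E+\delta E$, and the same two-line argument applies.

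The only delicate point is definitional. The paper's notion of $\vn$-umbilicity requires $A_{\vn}=A^*_{\vn}=fI$ with a \emph{common} function $f$. The lemma, however, pairs $\nabla$ with $A_{\vn}$ and $\nabla^*$ with $A^*_{\vn}$ separately, so I will read it one connection at a time: ``$A_{\vn}=fI$'' corresponds to concircularity for $\nabla$ alone.

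It is worth noting what the simultaneous version says. If $\vt$ is concircular for both $\nabla$ and $\nabla^*$ in the sense of \eqref{eq:concirc}, meaning with the same function, then the two identities force $A_{\vn}=A^*_{\vn}$. In that case full $\vn$-umbilicity and statistical concircularity of $\vt$ are equivalent.

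Non-triviality of the resulting function $f+\delta$ is not guaranteed. I will treat it as part of the convention and not as something to prove.
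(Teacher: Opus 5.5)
Your proof is correct and matches the paper's argument: both read the lemma directly off the tangential identity $\nabla_E\vt=A_{\vn}E+\delta E$ from \eqref{eq:vtan}, substituting $A_{\vn}=fI$ for the forward direction, solving $A_{\vn}E=(\rho-\delta)E$ for the converse, and treating the dual case the same way. Your remark that the lemma has to be read one connection at a time (because the paper's definition of $\vn$-umbilicity asks for a common $f$ in $A_{\vn}$ and $A^*_{\vn}$) is a fair clarification that the paper leaves implicit.
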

\begin{proof}
($\Rightarrow$) is immediate from \eqref{eq:vtan}, and 
 ($\Leftarrow$) If $\nabla_E\vt=\sigma E$, then \eqref{eq:vtan} gives $A_{\vn}E=(\sigma-\delta)E$.
\end{proof}

By \eqref{eq:AhA}, $g(A_{\vn}E,F)=\tg(h^{*}(E,F),\vn)$. Substituting \eqref{eq:vtan} into the soliton equation, we get the following statement.

\begin{theorem}\label{thm:Ricconc}
$(g,\vt,\lambda)$ is a statistical soliton on a submanifold $N$ of $(B,\nabla,g,\xi)$ if and only if
\begin{equation}\label{eq:Ricconc}
\Ric(E,F)=-(\lambda+\delta)\,g(E,F)-\tg\big(h^{*}(E,F),\vn\big),
\end{equation}
and dually
\begin{equation}\label{eq:Ricconcdual}
\Ric^{*}(E,F)=-(\lambda+\delta)\,g(E,F)-\tg\big(h(E,F),\vn\big).
\end{equation}
\end{theorem}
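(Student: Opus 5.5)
The plan is to substitute the tangential decomposition \eqref{eq:vtan} directly into the tensorial form of the two soliton equations \eqref{eq:statsol} on $N$, with $\zeta=\vt$. Concretely, $(g,\vt,\lambda)$ is a statistical soliton on $N$ exactly when, for all $E,F\in\Gam(TN)$,
\[
g(\nabla_E\vt,F)+\Ric(E,F)+\lambda g(E,F)=0,\qquad g(\nabla^*_E\vt,F)+\Ric^*(E,F)+\lambda g(E,F)=0 .
\]
Both equations will be rewritten purely in terms of $\delta$, $g$ and the second fundamental forms. Since every step is an identity, both directions of the equivalence follow at once.

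First, I will use \eqref{eq:vtan}, that is $\nabla_E\vt=A_{\vn}E+\delta E$, which came from comparing tangential parts of $\tn_Ev=\delta E$ via the Gauss and Weingarten formulae. This gives $g(\nabla_E\vt,F)=g(A_{\vn}E,F)+\delta g(E,F)$. Next, I will convert the shape-operator term with \eqref{eq:AhA}: $g(A_{\vn}E,F)=\tg(h^*(E,F),\vn)$. Substituting into the first soliton equation and solving for $\Ric$ yields \eqref{eq:Ricconc}.

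For the dual equation I will use the starred identity $\nabla^*_E\vt=A^*_{\vn}E+\delta E$. This comes from the second half of \eqref{eq:concirc}, namely $\tn^*_Ev=\delta E$, together with the starred Gauss and Weingarten formulae. Combined with $g(A^*_{\vn}E,F)=\tg(h(E,F),\vn)$, it gives \eqref{eq:Ricconcdual}. Conversely, if \eqref{eq:Ricconc} and \eqref{eq:Ricconcdual} hold, reversing these substitutions reproduces the two soliton equations.

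The only point needing care is the pairing in \eqref{eq:AhA}. The shape operator $A$ is paired with $h^*$ and $A^*$ with $h$, so the $\nabla$-equation picks up $h^*$ and the $\nabla^*$-equation picks up $h$. I would check that the Weingarten sign convention $\tn_EU=-A_UE+D^\perp_EU$ produces $+A_{\vn}E$ in \eqref{eq:vtan}. Indeed, $\tn_E\vn$ contributes $-A_{\vn}E$ on the right-hand side of the decomposition, and moving it to the left gives $+A_{\vn}E$. The sign is consistent, so no real obstacle arises.

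There is also a symmetry issue: the definition of statistical soliton requires the Ricci operator to be symmetric. Since $g(A_{\vn}E,F)=\tg(h^*(E,F),\vn)$ is symmetric in $E$ and $F$, the right-hand side of \eqref{eq:Ricconc} is automatically symmetric. The same holds for \eqref{eq:Ricconcdual}, so no extra hypothesis is needed.
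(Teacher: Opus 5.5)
Your proposal is correct and is exactly the paper's argument: substitute $\nabla_E\vt=A_{\vn}E+\delta E$ and its dual $\nabla^*_E\vt=A^*_{\vn}E+\delta E$ into the two tensorial soliton equations, then convert the shape-operator terms via \eqref{eq:AhA}, which pairs $A$ with $h^{*}$ and $A^{*}$ with $h$. Your extra checks on the Weingarten sign and on the symmetry of the right-hand sides are sound, and they make explicit what the paper leaves implicit.
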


If $N$ is totally umbilical, then $A_{\vn}=fI$ and $\vt$ is concircular on $N$ with $\nabla_E\vt=\sigma E$, $\sigma=f+\delta$, and \eqref{eq:Ricconc}--\eqref{eq:Ricconcdual} become
\begin{equation}\label{eq:umbRic}
\Ric=-(\lambda+\sigma)\,g,\qquad \Ric^*=-(\lambda+\sigma)\,g .
\end{equation}

\begin{theorem}\label{thm:umb}
Let $(g,\vt,\lambda)$ be a totally umbilical statistical soliton on a submanifold $N$ of $(B,\nabla,g,\xi)$ (resp.\ $(B,\nabla^*,g,\xi)$). Then $N$ is an Einstein-type (quasi-Einstein in the terminology of the original work) manifold, with $\Ric=-(\lambda+\sigma)g$.
\end{theorem}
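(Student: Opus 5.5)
The plan is to read the statement as a direct specialisation of Theorem~\ref{thm:Ricconc} under the umbilicity hypothesis, with Lemma~\ref{lem:umb} supplying the concircularity of $\vt$. Decompose the concircular field $v=\vt+\vn$ along $N$ and use the component relations \eqref{eq:vtan}, namely $\nabla_E\vt=A_{\vn}E+\delta E$ and its dual $\nabla^*_E\vt=A^*_{\vn}E+\delta E$. Total umbilicity is taken in the sense that $N$ is $\vn$-umbilical for both shape operators, $A_{\vn}=fI$ and $A^*_{\vn}=fI$, with the same function $f$; this is exactly what makes the two dual soliton equations compatible. Lemma~\ref{lem:umb} then gives $\nabla_E\vt=\nabla^*_E\vt=\sigma E$ with $\sigma=f+\delta$, so $\vt$ is a concircular field on $N$ in the sense of \eqref{eq:concirc}.

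Next, substitute into the two tensorial soliton equations
\[
g(\nabla_E\vt,F)+\Ric(E,F)+\lambda g(E,F)=0,\qquad g(\nabla^*_E\vt,F)+\Ric^*(E,F)+\lambda g(E,F)=0 .
\]
This gives at once $\Ric=-(\lambda+\sigma)g$ and $\Ric^*=-(\lambda+\sigma)g$, i.e.\ \eqref{eq:umbRic}. Equivalently, one can start from \eqref{eq:Ricconc}--\eqref{eq:Ricconcdual} and use \eqref{eq:AhA}:
\[
\tg(h^*(E,F),\vn)=g(A_{\vn}E,F)=f\,g(E,F),
\]
and dually for $h$. Either route shows that the normal terms collapse to multiples of $g$. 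For the version in $(B,\nabla^*,g,\xi)$ one interchanges the roles of $\nabla$ and $\nabla^*$ (and $A$, $A^*$), as in the Conventions.

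The main obstacle is that $\sigma=f+\delta$ is a priori a function, not a constant, so the conclusion is only ``Einstein-type'' (Ricci tensor a functional multiple of $g$), matching the paper's wording. If one wants genuine Einstein, I would attempt a Schur-type argument: take the divergence of $\Ric=-(\lambda+\sigma)g$ using the contracted second Bianchi identity for the statistical pair. I expect this to need extra hypotheses, such as $s\ge3$ and equiaffinity or Ricci-symmetry of the induced structure. I would therefore present it only as a remark and keep the theorem at the stated Einstein-type level.
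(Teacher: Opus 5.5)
Your proposal is correct and follows the paper's own argument. Under $A_{\vn}=A^*_{\vn}=fI$, Lemma~\ref{lem:umb} makes $\vt$ concircular with $\sigma=f+\delta$, and Theorem~\ref{thm:Ricconc} (equivalently, substituting \eqref{eq:vtan} into the soliton equations) then collapses to \eqref{eq:umbRic}; your explicit reading of total umbilicity as $\vn$-umbilicity with a common function $f$ is the assumption the paper makes tacitly, although $A_{\vn}=fI$ alone already suffices for the stated conclusion $\Ric=-(\lambda+\sigma)g$.
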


\begin{remark}
A manifold carrying a non-trivial concircular field is locally a warped product $I\times_\rho M$ (Tashiro--Obata type results), and complete Einstein manifolds with such fields are rigid (e.g.\ spheres in the elliptic case). Whether $N$ in Theorem~\ref{thm:umb} is isometric to a sphere requires additional global hypotheses (completeness and a suitable sign of $\sigma$), which in the original formulation are tacitly assumed.
\end{remark}

\section{Almost Quasi-Yamabe Solitons on Submanifolds}\label{sec:yamabe}
Let $v$ be concircular on $(B,\nabla,g,\xi)$ as before. For torsion-free dual connections the Lie derivative of $g$ is $(\Lie_Xg)(E,F)=g(\nabla_EX,F)+g(E,\nabla^*_FX)$. From \eqref{eq:vtan} and its dual,
\begin{equation}\label{eq:Liev}
(\Lie_{\vt}g)(E,F)=2\delta\,g(E,F)+2\,g(A^{0}_{\vn}E,F),\qquad A^0_{\vn}=\tfrac12(A_{\vn}+A^*_{\vn}).
\end{equation}
Combining \eqref{eq:Liev} with \eqref{eq:aqy} gives:

\begin{theorem}\label{thm:aqy}
Let $(g,\vt,\lambda,\omega)$ be an almost quasi-Yamabe soliton on a submanifold $N$ of $(B,\nabla,g,\xi)$ (equivalently $(B,\nabla^*,g,\xi)$). Then
\begin{equation}\label{eq:aqyfund}
(R-\lambda-\delta)\,g(E,F)=g(A^{0}_{\vn}E,F)+\omega\,\eta(E)\eta(F),
\end{equation}
where $R$ is the scalar curvature of $N$.
\end{theorem}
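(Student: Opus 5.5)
The plan is to substitute \eqref{eq:Liev} directly into the defining equation \eqref{eq:aqy} of an almost quasi-Yamabe soliton on $N$, with potential $\zeta=\vt$. The work is in justifying \eqref{eq:Liev} carefully; the theorem then follows by rearranging.

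\emph{Step 1: the Lie derivative formula.} On $N$ the induced pair $(\nabla,\nabla^*)$ is dual and torsion-free. For any $X$, expand $(\Lie_Xg)(E,F)=Xg(E,F)-g([X,E],F)-g(E,[X,F])$. Use duality in the form $Xg(E,F)=g(\nabla_XE,F)+g(E,\nabla^*_XF)$. Then replace the brackets by $\nabla_XE-\nabla_EX$ and $\nabla^*_XF-\nabla^*_FX$. This gives
\[
(\Lie_Xg)(E,F)=g(\nabla_EX,F)+g(E,\nabla^*_FX).
\]
Next insert $X=\vt$, using \eqref{eq:vtan} and its dual: $\nabla_E\vt=A_{\vn}E+\delta E$ and $\nabla^*_F\vt=A^*_{\vn}F+\delta F$. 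This yields $2\delta g(E,F)+g(A_{\vn}E,F)+g(E,A^*_{\vn}F)$.

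\emph{Step 2: symmetrising the shape operator terms.} By \eqref{eq:AhA}, $g(E,A^*_{\vn}F)=\tg(h(F,E),\vn)$. Since $h$ is symmetric, this equals $g(A^*_{\vn}E,F)$. Likewise $A_{\vn}$ is $g$-self-adjoint, because $h^*$ is symmetric. Hence the two shape terms combine to $2g(A^0_{\vn}E,F)$, which proves \eqref{eq:Liev}. This is the step I expect to be the only real obstacle. One must check that the duality between $A$ and $A^*$ in \eqref{eq:AhA} makes the mixed term $g(A_{\vn}E,F)+g(E,A^*_{\vn}F)$ collapse to the Levi-Civita shape operator $A^0_{\vn}$, rather than leaving an asymmetric remainder.

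\emph{Step 3: conclusion.} Put $\frac12\Lie_{\vt}g=\delta g+g(A^0_{\vn}\cdot,\cdot)$ into \eqref{eq:aqy}. This gives
\[
\delta g(E,F)+g(A^0_{\vn}E,F)+(\lambda-R)g(E,F)+\omega\eta(E)\eta(F)=0 .
\]
Here $\eta$ is restricted to $N$, and $R$ is the scalar curvature of $N$. Rearranging gives $(R-\lambda-\delta)g(E,F)=g(A^0_{\vn}E,F)+\omega\eta(E)\eta(F)$, which is \eqref{eq:aqyfund}.

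The derivation uses $\nabla$ and $\nabla^*$ symmetrically, and $A^0_{\vn}$ is invariant under interchanging $\nabla$ and $\nabla^*$. Therefore the same identity holds whether $B$ is regarded with $\nabla$ or with $\nabla^*$, which justifies the parenthetical ``equivalently'' in the statement.
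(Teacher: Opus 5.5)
Your proposal is correct and follows the paper's route. You derive $(\Lie_{\vt}g)(E,F)=2\delta\,g(E,F)+2g(A^0_{\vn}E,F)$ from the dual-connection formula for the Lie derivative together with \eqref{eq:vtan} and its dual, and then substitute it into \eqref{eq:aqy}; your Step 2 (self-adjointness of $A_{\vn}$ and $A^*_{\vn}$ via \eqref{eq:AhA} and the symmetry of $h,h^*$) just makes explicit a point the paper leaves implicit.
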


Taking $E=F=\xi$ (tangent to $N$) in \eqref{eq:aqyfund}, and tracing, we obtain the following two consequences.

\begin{theorem}\label{thm:aqymin}
Under the hypotheses of Theorem~\ref{thm:aqy}, assume that $\xi$ is tangent to $N$.
\begin{enumerate}[label=(\roman*),itemsep=0pt]
\item If $g(A^0_{\vn}\xi,\xi)=0$ {\rm (}e.g.\ $h(\xi,\xi  ) =0$, as in \eqref{eq:xitan}{\rm )}, then $R=\lambda+\delta+\omega$.
\item If $N$ is minimal, then $\tr A^0_{\vn}=0$ and $R=\lambda+\delta+\dfrac{\omega}{s}$.
\end{enumerate}
\end{theorem}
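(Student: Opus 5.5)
The plan is to work directly from the fundamental relation \eqref{eq:aqyfund} of Theorem~\ref{thm:aqy},
\[
(R-\lambda-\delta)\,g(E,F)=g(A^{0}_{\vn}E,F)+\omega\,\eta(E)\eta(F),
\]
valid for all $E,F$ tangent to $N$. Since $\xi$ is tangent to $N$ and is a unit vector field, we have $\eta(\xi)=g(\xi,\xi)=1$. Both parts come from specialising this identity: once to $E=F=\xi$, and once by tracing.

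For (i), we set $E=F=\xi$. The identity then reads $R-\lambda-\delta=g(A^0_{\vn}\xi,\xi)+\omega$. Under the hypothesis $g(A^0_{\vn}\xi,\xi)=0$ this gives $R=\lambda+\delta+\omega$ directly.

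To justify the parenthetical example, we use \eqref{eq:AhA}:
\[
g(A^0_{\vn}\xi,\xi)=\tfrac12\big(\tg(h^*(\xi,\xi),\vn)+\tg(h(\xi,\xi),\vn)\big).
\]
By \eqref{eq:xitan} and Theorem~\ref{thm:etaEin} we have $h(\xi,\xi)=0$ and $h^*(\xi,\xi)=0$ when $\xi$ is tangent, so this quantity vanishes. The main point to check is that the concircular field $v$ lives on the same Kenmotsu statistical ambient space, so that \eqref{eq:xitan} applies with both $h$ and $h^*$.

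For (ii), we take the trace of \eqref{eq:aqyfund} over an orthonormal frame of $N$, where $\dim N=s$. The left side gives $s(R-\lambda-\delta)$. On the right, the second term gives $\omega\sum_i\eta(e_i)^2=\omega|\xi|^2=\omega$, because $\xi$ is tangent and unit. The first term is
\[
\tr A^0_{\vn}=\tfrac12\big(\tr A_{\vn}+\tr A^*_{\vn}\big)=\tfrac{s}{2}\,\tg(H^*+H,\vn),
\]
again by \eqref{eq:AhA}. Minimality means $H=H^*=0$, so $\tr A^0_{\vn}=0$. Dividing by $s$ then yields $R=\lambda+\delta+\omega/s$.

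The only delicate step is the bookkeeping in the trace of $\omega\,\eta\otimes\eta$. This requires $\xi$ to be tangent, so that $\sum_i\eta(e_i)^2=1$ rather than $|\xi^\top|^2$, and it requires consistent use of the pairing $h\leftrightarrow A^*$ from \eqref{eq:AhA}. Once these are in place, the rest is immediate.
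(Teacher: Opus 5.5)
Your proof is correct and follows the paper's own argument. The paper also obtains (i) by specialising \eqref{eq:aqyfund} to $E=F=\xi$, and (ii) by tracing over an orthonormal frame of $N$, with $\eta(\xi)=|\xi|^2=1$ and $\tr A^0_{\vn}=\tfrac{s}{2}\,\tg(H+H^*,\vn)$.

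Your observation that the parenthetical example needs both $h(\xi,\xi)=0$ and $h^*(\xi,\xi)=0$ is right. Both hold because $\tn_E\xi$ and $\tn^*_E\xi$ in \eqref{eq:nablaxi} are tangent to $N$, so your version is a slightly sharper reading of the paper's ``e.g.\ $h(\xi,\xi)=0$''.
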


\begin{corollary}
In the concurrent case $\delta=1$: $R=\lambda+1+\omega$ under (i), and $R=\lambda+1+\omega/s$ for minimal $N$ under (ii).
\end{corollary}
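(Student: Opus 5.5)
The corollary is obtained by specialising Theorem~\ref{thm:aqymin} to $\delta=1$. First I would check that the hypotheses carry over unchanged. A concurrent field is a concircular field in the sense of \eqref{eq:concirc} with $\delta\equiv1$, that is, $\tn_Ev=E$ and $\tn^*_Ev=E$. So the decomposition \eqref{eq:vtan}, the Lie-derivative identity \eqref{eq:Liev} and the fundamental relation \eqref{eq:aqyfund} of Theorem~\ref{thm:aqy} all hold with $\delta=1$:
\[
(R-\lambda-1)\,g(E,F)=g(A^{0}_{\vn}E,F)+\omega\,\eta(E)\eta(F).
\]

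For case (i) I would set $E=F=\xi$, which is allowed because $\xi$ is tangent to $N$. Using $\eta(\xi)=1$ and $g(\xi,\xi)=1$, the relation gives $R-\lambda-1=g(A^0_{\vn}\xi,\xi)+\omega$. Under the hypothesis $g(A^0_{\vn}\xi,\xi)=0$ this yields $R=\lambda+1+\omega$. This hypothesis holds, for instance, when $h(\xi,\xi)=h^*(\xi,\xi)=0$. The reason is that, by \eqref{eq:AhA}, $g(A^0_{\vn}\xi,\xi)=\tfrac12\tg\big((h+h^*)(\xi,\xi),\vn\big)$, and \eqref{eq:xitan} together with its dual gives the vanishing of $h$ and $h^*$ on $\xi$.

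For case (ii) I would take the trace of the relation over an orthonormal frame of $N$, of dimension $s$. This gives
\[
s(R-\lambda-1)=\tr A^0_{\vn}+\omega\sum_i\eta(e_i)^2=\tr A^0_{\vn}+\omega,
\]
because $\xi$ is a unit vector tangent to $N$. Minimality means $H=H^*=0$, so $\tr A^0_{\vn}=\tfrac{s}{2}\,\tg(H+H^*,\vn)=0$. Dividing by $s$ then gives $R=\lambda+1+\omega/s$.

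I do not expect a real obstacle, since both parts are direct substitutions. The only points that need care are confirming that $\sum_i\eta(e_i)^2=1$, which uses that $\xi$ is tangent to $N$, and getting the trace of the shape operator in terms of the mean curvature vectors from \eqref{eq:AhA} right.
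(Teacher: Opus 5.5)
Your proposal is correct and follows the paper's route exactly: you specialise \eqref{eq:aqyfund} to $\delta=1$, evaluate it at $E=F=\xi$ for (i), and for (ii) you trace it using $\sum_i\eta(e_i)^2=1$ and $\tr A^0_{\vn}=\tfrac{s}{2}\,\tg(H+H^*,\vn)=0$. Your observation that (i) needs both $h(\xi,\xi)$ and $h^*(\xi,\xi)$ to vanish, which \eqref{eq:xitan} and its dual supply, is a slightly more careful version of the paper's parenthetical ``e.g.\ $h(\xi,\xi)=0$''.
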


\begin{remark}
The sign conventions in \eqref{eq:aqy} matter: with the term $+\omega\,\eta\otimes\eta$ in \eqref{eq:aqy}, the constant $\omega$ enters \eqref{eq:aqyfund} with a plus sign. The version with $\omega\mapsto-\omega$ is obtained by changing the sign of $\omega$ in the definition.
\end{remark}

\section{Chen-Type Inequalities for Statistical Warped Product \\Submanifolds}\label{sec:chen}
Solitons on warped products are related to their extrinsic geometry through the following inequality \cite{Siddiqui2019}, which generalises the first Chen inequality \cite{Chen1993}.

Let $\phi:N=N_1\times_fN_2\to\tN(\tilde c)$ be a statistical immersion into a statistical manifold of constant curvature $\tilde c$, $\dim N_1=r$, $\dim N_2=k$, $m=r+k$, and let $\{e_1,\dots,e_r\}$ and $\{e_{r+1},\dots,e_m\}$ be orthonormal frames tangent to $N_1$ and $N_2$. For a warped product,
\[
\Sec(E_1\wedge E_2)=-\frac{\Hess f(E_1,E_1)}{f}\qquad(E_1\in TN_1,\ E_2\in TN_2\ \text{unit}),
\]
so that, with the sign convention $\Delta f=-\tr\Hess f$ used here,
\begin{equation}\label{eq:sumSec}
\sum_{b=1}^{r}\Sec^{\nabla,\nabla^*}(e_b\wedge e_{r+1})=\frac{\Delta^{0}f}{f}.
\end{equation}

\begin{theorem}[Chen-type inequality]\label{thm:Chen}
Let $N=N_1\times_fN_2$ be an $m$-dimensional statistical warped product submanifold of an $n$-dimensional statistical manifold of constant curvature $\tilde c$. Then
\begin{equation*}\begin{aligned}
\sigma^{\nabla,\nabla^*}-\Sec^{\nabla,\nabla^*}(e_1\wedge e_{r+1})\ \ge\ &2\big(\sigma^{0}-\Sec^{0}(e_1\wedge e_{r+1})\big)-\frac{(m-2)(m+1)\tilde c}{2}\\& \quad -\frac{m^2(m-2)}{4(m-1)}\big(\|H\|^2+\|H^*\|^2\big).
\end{aligned}\end{equation*}
\end{theorem}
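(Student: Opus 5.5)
The plan is to follow the standard route for Chen's first inequality in the statistical setting (as in \cite{Siddiqui2019}), working throughout with $h^0=\tfrac12(h+h^*)$ so that the dual second fundamental forms combine into Levi-Civita quantities. The argument has five steps.

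\emph{Step 1 (trace the Gauss equation).} Start from \eqref{eq:GaussEq} and its dual for orthonormal $e_i,e_j$. Averaging the two gives
\[
2\Sec^{\nabla,\nabla^*}(e_i\wedge e_j)=2\tilde c+\tg(h(e_i,e_i),h^*(e_j,e_j))+\tg(h^*(e_i,e_i),h(e_j,e_j))-2\tg(h(e_i,e_j),h^*(e_i,e_j)).
\]
Summing over $i<j$ then yields
\[
2\sigma^{\nabla,\nabla^*}=m(m-1)\tilde c+m^2\tg(H,H^*)-\sum_{i,j}\tg(h(e_i,e_j),h^*(e_i,e_j)).
\]

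\emph{Step 2 (rewrite in terms of $h^0$).} Use the identities $h+h^*=2h^0$ and $4\tg(h,h^*)=4\|h^0\|^2-\|h-h^*\|^2$, and similarly $4\tg(H,H^*)=4\|H^0\|^2-\|H-H^*\|^2$. Applying the same polarisation to the Levi-Civita Gauss equation expresses $\sigma^0$ through $h^0$ and the curvature of $\tn^0$. Since $\tn^0$ is not of constant curvature, its curvature equals $S-[\mathcal T,\mathcal T]$ by \eqref{eq:S}.

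\emph{Step 3 (localise at the plane $e_1\wedge e_{r+1}$).} Apply Step 1 once more to subtract $\Sec^{\nabla,\nabla^*}(e_1\wedge e_{r+1})$. Then bound each quantity in terms of its Levi-Civita counterpart and norms of the mean curvatures. This uses Chen's algebraic lemma: if $a_1+\dots+a_m=b$, then $\sum_{i<j}a_ia_j-a_1a_2\le \frac{m-2}{2(m-1)}b^2$, applied separately to the diagonal entries of each normal component of $h$ and of $h^*$. Summing these bounds with the cross terms produces the factor $\frac{m^2(m-2)}{4(m-1)}$ in front of $\|H\|^2+\|H^*\|^2$, after using $2\tg(H,H^*)\le\|H\|^2+\|H^*\|^2$.

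\emph{Step 4 (collect the constants).} Combine the $\tilde c$ contributions, namely $\frac{m(m-1)}{2}\tilde c$ from the full sum, minus $\tilde c$ from the removed plane, together with the Levi-Civita correction. This should give the coefficient $\frac{(m-2)(m+1)}{2}\tilde c$ and the factor $2(\sigma^0-\Sec^0)$ in the statement.

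\emph{Step 5 (role of the warped product).} The warped product structure enters only through the choice of frames adapted to $N_1$ and $N_2$, and through \eqref{eq:sumSec}. That identity lets the mixed sectional curvature be expressed as $\Delta^0 f/f$ if desired, but it is not needed for the inequality itself.

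I expect the main obstacle to be Step 3 together with the constant bookkeeping in Step 4. The sign of $\tg(h,h^*)$ is indefinite, so the estimate must route everything through $h^0$ and absorb $\|h-h^*\|^2$ on the favourable side. Matching exactly the coefficients $\frac{(m-2)(m+1)}2$ and $\frac{m^2(m-2)}{4(m-1)}$ will require care. If a direct match fails, I would first try writing $2\sigma^{\nabla,\nabla^*}-2\sigma^0$ explicitly via $[\mathcal T_E,\mathcal T_F]$ and estimating that difference with the same algebraic lemma.
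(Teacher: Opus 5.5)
There is a genuine gap in Steps 2--4: the identity that actually makes the argument work is missing.

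Your Step 1 is correct, but it contains only mixed products $\tg(h(\cdot,\cdot),h^*(\cdot,\cdot))$. Chen's lemma, which is a statement about $\sum_{i<j}a_ia_j$ for a single vector, can be applied ``separately to $h$ and $h^*$'' only after these products have been rewritten as separate quadratic forms in $h$ and in $h^*$. Your polarisation $4\tg(h,h^*)=4\|h^0\|^2-\|h-h^*\|^2$ instead produces forms in $h^0$ and $h-h^*$. Combined with the Levi-Civita Gauss equation (ambient $\tn^0$-curvature $\tilde c$ on tangent planes), it gives
\[
\sigma^{\nabla,\nabla^*}=\sigma^0-\tfrac18\big(m^2\|H-H^*\|^2-\|h-h^*\|^2\big).
\]
Here $\sigma^0$ has coefficient $1$, no $\tilde c$ appears, and only $\|H-H^*\|^2$ occurs. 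So there is nothing from which $2(\sigma^0-\Sec^0)$ and $\frac{(m-2)(m+1)}2\tilde c$ could be ``collected'', and $2\tg(H,H^*)\le\|H\|^2+\|H^*\|^2$ plays no role. The paper's step (1) uses instead, with $h_{ij}=h(e_i,e_j)$,
\[
\tg(h_{ii},h^*_{jj})+\tg(h^*_{ii},h_{jj})=4\tg(h^0_{ii},h^0_{jj})-\tg(h_{ii},h_{jj})-\tg(h^*_{ii},h^*_{jj}),\qquad 2\tg(h_{ij},h^*_{ij})=4\|h^0_{ij}\|^2-\|h_{ij}\|^2-\|h^*_{ij}\|^2,
\]
together with the Levi-Civita Gauss equation. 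This yields $\Sec^{\nabla,\nabla^*}(e_i\wedge e_j)=2\Sec^0(e_i\wedge e_j)-\tilde c-\tfrac12\sum_a\{h^a_{ii}h^a_{jj}-(h^a_{ij})^2+h^{*a}_{ii}h^{*a}_{jj}-(h^{*a}_{ij})^2\}$, which is the source of the factor $2$ and of $-\frac{m(m-1)}2\tilde c$. After subtracting the plane $e_1\wedge e_{r+1}$ and dropping squares, Chen's lemma applies to $h$ and to $h^*$ individually, each contributing $\frac{m^2(m-2)}{4(m-1)}$.

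Your decomposition can also be completed, but it needs an ingredient absent from your plan. Chen's lemma for $h-h^*$ gives $\sigma^{\nabla,\nabla^*}-\Sec^{\nabla,\nabla^*}\ge\sigma^0-\Sec^0-\frac{m^2(m-2)}{8(m-1)}\|H-H^*\|^2$. Adding the classical Levi-Civita Chen inequality $\sigma^0-\Sec^0\le\frac{(m-2)(m+1)}2\tilde c+\frac{m^2(m-2)}{8(m-1)}\|H+H^*\|^2$ and the parallelogram law then gives exactly the statement.

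Second, the ``Levi-Civita correction'' of Step 4 cannot be absorbed into the $\tilde c$-coefficient. You are right that $\tn^0$ need not have constant curvature. By \eqref{eq:S}, $\tg(\widetilde{\Rie}^0(e_i,e_j)e_j,e_i)=\tilde c-\tau_{ij}$ with $\tau_{ij}=\tg(\mathcal T(e_i,e_i),\mathcal T(e_j,e_j))-\|\mathcal T(e_i,e_j)\|^2$. Keeping this term, the exact version of the paper's step (1) acquires $+2\sum_{i<j}\tau_{ij}$. The difference of the two sides of the inequality then acquires $2\sum_{i<j,\,(i,j)\ne(1,r+1)}\tau_{ij}$, which has no sign. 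For instance, take the exponential and mixture connections with the Fisher metric on the probability simplex: this is a dually flat structure on a metric of curvature $\frac14$, so $\tilde c=0$ and $\tau_{ij}=-\frac14$. For $h=h^*=0$ (e.g.\ $N=\tN$, or $N=\tN\times\{0\}\subset\tN\times\R$, with $m\ge3$), the left side falls short of the right side by exactly $\frac{(m-2)(m+1)}4$. Nothing in the argument uses the warped-product structure. The paper's proof therefore tacitly assumes that $\tn^0$ has sectional curvature $\tilde c$ on planes tangent to $N$, as happens e.g.\ when $[\mathcal T_E,\mathcal T_F]=0$ (cf.\ Example~\ref{ex:H5}). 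You should state that hypothesis explicitly rather than expect the bookkeeping in Step 4 to produce the coefficient.
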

\begin{proof}
(1) Use the Gauss equation \eqref{eq:GaussEq} and $2h^0=h+h^*$ to rewrite
\[
\sigma^{\nabla,\nabla^*}=2\sigma^0-\frac{m(m-1)\tilde c}{2}-\frac12\sum_{a}\sum_{i<j}\Big\{h^a_{ii}h^a_{jj}-(h^a_{ij})^2+h^{*a}_{ii}h^{*a}_{jj}-(h^{*a}_{ij})^2\Big\},
\]
and the analogous expression $$\Sec^{\nabla,\nabla^*}(e_1\wedge e_{r+1})=2\Sec^0(e_1\wedge e_{r+1})-\tilde c-\frac12\sum_a\{\cdots\}_{1,r+1}.$$

(2) Subtract; the terms $\frac12(h^a_{1,r+1})^2$ cancel and the remaining squares $(h_{ij}^a)^2$, $(h_{ij}^{*a})^2$ are non-negative and can be dropped.

(3) For each normal index $a$ maximise the quadratic form $\varphi_a=\sum_{i<j}h^a_{ii}h^a_{jj}-h^a_{11}h^a_{r+1,r+1}$ under the linear constraint $h^a_{11}+\dots+h^a_{mm}=t_a$ (Oprea's constrained-extremum lemma \cite{Oprea2005,Oprea2008}). The maximum is attained at $(0,\alpha,\dots,\alpha)$ with $\alpha=t_a/(m-1)$, giving 
$$\varphi_a\le\frac{m^2(m-2)}{2(m-1)}(H^a)^2.$$ The same bound holds for $\varphi^*_a$. \qedhere
\end{proof}

Summing over $e_b$, $b=1,\dots,r$ in the second term and using \eqref{eq:sumSec} one obtains a bound in terms of the warping function.

\begin{theorem}\label{thm:Chen2}
Under the hypotheses of Theorem~\ref{thm:Chen},
\[
\sigma^{\nabla,\nabla^*}\ \ge\ 2\sigma^{0}-\frac{\Delta^{0}f}{r\,f}-\frac{(m-2)(m+1)\tilde c}{2}-\frac{m^2(m-2)}{4(m-1)}\big(\|H\|^2+\|H^*\|^2\big).
\]
\end{theorem}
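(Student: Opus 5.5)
The plan is to derive Theorem~\ref{thm:Chen2} from Theorem~\ref{thm:Chen} by averaging over the base directions and then using \eqref{eq:sumSec}. Theorem~\ref{thm:Chen} is stated for the specific pair $(e_1,e_{r+1})$. However, the base frame $\{e_1,\dots,e_r\}$ was arbitrary, so the same inequality holds with $e_1$ replaced by any $e_b$, $b=1,\dots,r$. To justify this I would relabel the frame, or observe that Oprea's maximisation in step (3) of the proof is symmetric in the chosen index pair. This gives, for each $b$,
\[
\sigma^{\nabla,\nabla^*}-\Sec^{\nabla,\nabla^*}(e_b\wedge e_{r+1})\ \ge\ 2\sigma^0-2\Sec^0(e_b\wedge e_{r+1})-\frac{(m-2)(m+1)\tilde c}{2}-\frac{m^2(m-2)}{4(m-1)}\big(\|H\|^2+\|H^*\|^2\big).
\]

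Next I sum these $r$ inequalities and divide by $r$. The constant terms and the mean curvature terms are unchanged. The curvature term on the left becomes $\frac1r\sum_b\Sec^{\nabla,\nabla^*}(e_b\wedge e_{r+1})$, which equals $\frac{\Delta^0 f}{rf}$ by \eqref{eq:sumSec}. The right-hand side acquires $-\frac2r\sum_b\Sec^0(e_b\wedge e_{r+1})$.

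For this Levi-Civita term I use the warped product formula $\Sec^0(E_1\wedge E_2)=-\Hess f(E_1,E_1)/f$, which holds for the Levi-Civita connection of $g$. With the sign convention $\Delta f=-\tr\Hess f$ of this section, it gives $\sum_b\Sec^0(e_b\wedge e_{r+1})=\frac{\Delta^0f}{f}$ as well. Moving everything to one side then leaves
\[
\sigma^{\nabla,\nabla^*}\ \ge\ 2\sigma^0+\frac{\Delta^0 f}{rf}-\frac{2\Delta^0 f}{rf}-\cdots=2\sigma^0-\frac{\Delta^0f}{rf}-\cdots,
\]
which is exactly the claimed inequality.

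The main obstacle is the bookkeeping of the warping-function terms, which hinges on one identification. The coefficient $-\frac{1}{rf}\Delta^0 f$ appears only if \eqref{eq:sumSec} for the statistical sectional curvature and the Levi-Civita version yield the same quantity $\Delta^0 f/f$. Equation \eqref{eq:sumSec} as written asserts this: the statistical mixed sectional curvatures of the warped product are controlled by the Hessian of $f$ with respect to $\nabla^0$, because the warped statistical structure of Proposition~\ref{prop:dualwarp} has mixed terms $D_XU=\frac{Xf}{f}U$ identical to the Levi-Civita ones. I would check this carefully first, by computing $\frac12\big(g(\Rie(X,U)U,X)+g(\Rie^*(X,U)U,X)\big)$ from Proposition~\ref{prop:dualwarp}. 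Any discrepancy there would change the coefficient in front of $\Delta^0f/(rf)$.
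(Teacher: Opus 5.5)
Your proposal is correct and follows the paper's route: apply Theorem~\ref{thm:Chen} to each pair $(e_b,e_{r+1})$, average over $b=1,\dots,r$, and use \eqref{eq:sumSec} for both the statistical and the Levi-Civita mixed sectional curvatures, so that the warping terms combine as $\frac{\Delta^0 f}{rf}-\frac{2\Delta^0 f}{rf}=-\frac{\Delta^0 f}{rf}$. The identification you flag also holds: in $\tfrac12(\Rie+\Rie^*)$ the contributions of the difference tensor of $N_1$ enter with opposite signs and cancel, leaving $-\Hess^0 f(X,X)/f$ exactly as in the Levi-Civita case.
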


\begin{remark}[Casorati curvatures]
Analogous results hold for the Casorati curvatures $\mathcal C=\frac1m\|h\|^2$ and $\mathcal C^*=\frac1m\|h^*\|^2$ and the normalised $\delta$-Casorati curvatures: by the same optimisation technique one obtains an upper bound for $2\sigma^{\nabla,\nabla^*}$ in terms of the partial Casorati curvatures of $N_1$ and $N_2$, the partial mean curvatures and $k\Delta^0f/f$ \cite{Siddiqui2019}. Equality cases, in which the second fundamental forms are forced to be of a special type, link these inequalities with minimal immersions of warped products into space forms.  For further results on Casorati curvatures in statistical manifolds, we refer to \cite[Section 18]{Chen21}.
\end{remark}

\section{Examples}\label{sec:examples}

\begin{example}\label{ex:H5}
Let $B=\{(x,y,z,u,w)\in\R^5:w>0\}$ and consider the orthonormal frame
\begin{align*}
&v_1=e^{-w}\partial_x,\ v_2=e^{-w}\partial_y,\ v_3=e^{-w}\partial_z,\ \\& v_4=e^{-w}\partial_u,\ v_5=\partial_w=\xi ,
\end{align*}
with $\phi v_1=v_3$, $\phi v_2=v_4$, $\phi v_3=-v_1$, $\phi v_4=-v_2$, $\phi\xi=0$. The Levi-Civita connection is
\begin{align*}
&\nabla^g_{v_i}v_i=-\xi,\quad \nabla^g_{v_i}v_j=0\ (i\neq j\le 4),\quad
\\&\nabla^g_{v_i}\xi=v_i,\quad\nabla^g_\xi v_i=0,\quad\nabla^g_\xi\xi=0,
\end{align*}
and $(B,g)$ is the hyperbolic space $\mathbb H^5(-1)$ ($g=e^{2w}\sum dx_i^2+dw^2$), carrying the Kenmotsu structure. 

Define $\mathcal T(E,F)=a\,\eta(E)\eta(F)\xi$, $a\in\R$; \eqref{eq:T} and \eqref{eq:Kstat} are readily checked, and $\nabla=\nabla^g+\mathcal T$, $\nabla^*=\nabla^g-\mathcal T$ satisfy
\begin{align*}
&\nabla_{v_i}v_i=-\xi,\ \ \nabla_{v_i}\xi=v_i,\ \ \nabla_\xi\xi=a\xi,\qquad
\\&\nabla^*_{v_i}v_i=-\xi,\ \ \nabla^*_{v_i}\xi=v_i,\ \ \nabla^*_\xi\xi=-a\xi ,
\end{align*}
with all other covariant derivatives of frame fields zero. Thus $(B,\nabla,g,\phi,\xi)$ is a $5$-dimensional Kenmotsu statistical manifold with $\beta=a$. Since $[\mathcal T_E,\mathcal T_F]=0$, formula \eqref{eq:S} gives $S=\Rie^{g}$, i.e.\ $S(E,F)G=-(g(F,G)E-g(E,G)F)$ (constant curvature $-1$). A direct computation of $\Rie^{\nabla}=\Rie^g+(\nabla^g_E\mathcal T)_F-(\nabla^g_F\mathcal T)_E$ gives
\[
\Ric^{\nabla}=-(4+a)\,g+5a\,\eta\otimes\eta,\qquad \Ric^{\nabla^*}=-(4-a)\,g-5a\,\eta\otimes\eta,
\]
so that $B$ is $\eta$-Einstein for both connections, with scalar curvature $R^{\nabla}=R^{\nabla^*}=-20$ for every $a$. (For $a=0$ this is the Einstein metric $\Ric=-4g$ of $\mathbb H^5(-1)$.)
\end{example}

\begin{example}
On the manifold $\mathbb H^5(-1)$ of Example~\ref{ex:H5} one has $\Lie_\xi g=2(g-\eta\otimes\eta)$. Hence \eqref{eq:aqy} becomes
\[
(1+\lambda-R)\,g+(\omega-1)\,\eta\otimes\eta=0,
\]
and $(g,\xi,\lambda,\omega)$ is an almost quasi-Yamabe soliton exactly for $\lambda=R-1=-21$ and $\omega=1$, i.e.\ a \emph{shrinking} soliton. This holds for the Levi-Civita metric structure and is therefore valid for all members of the family $(\nabla,\nabla^*)$ of Example~\ref{ex:H5}, because $R^{\nabla}=R^{\nabla^*}=R^g=-20$.
\end{example}

\begin{example}
Using \eqref{eq:nablaxi} and the Ricci tensors of Example~\ref{ex:H5}, the $\nabla$-Ricci soliton equation for $\xi$ reads
\[
(\lambda-3-a)\,g+(6a-1)\,\eta\otimes\eta=0 ,
\]
and the $\nabla^*$-equation reads $$(\lambda-3+a)\,g-(6a+1)\,\eta\otimes\eta=0.$$ The first is solvable only for $a=\frac16$ (with $\lambda=\frac{19}{6}$) and the second only for $a=-\frac16$; hence $(\xi,\lambda)$ is a $\nabla$-Ricci soliton or a $\nabla^*$-Ricci soliton for isolated values of $a$ but never a statistical soliton on $B$. This is consistent with Theorem~\ref{thm:sign}: statistical solitons with $\xi$ tangent need $\beta=0$ \emph{and} an $\eta$-Einstein Ricci tensor \eqref{eq:etaEin}, which the Einstein manifold $\mathbb H^5(-1)$ at $a=0$ does not have. Proper $\eta$-Einstein submanifolds or ambient manifolds are therefore needed to realise Theorem~\ref{thm:etaEin}.
\end{example}

\begin{example} Consider the Kenmotsu structure on the unit hypersphere $S^5$ with $$\mathcal T(E,F)=a\,\eta(E)\eta(F)\xi$$ and local coordinates $(x^i,y^i,w)$, $i=1,2$, $$\xi=\partial_w, \quad \phi(\partial_{x^i})=\partial_{y^i}, \quad \phi(\partial_{y^i})=-\partial_{x^i}, \quad \phi(\partial_w)=0.$$ The $3$-dimensional submanifold $N=\{(x^1,y^1,0,0,w)\}$ is invariant ($\phi TN\subset TN$) and $\xi$ is tangent to $N$; hence Theorems~\ref{thm:etaEin} and \ref{thm:sign} apply with $s=3$: if $(\xi,\lambda)$ is a statistical soliton on $N$ (for constant $a=\beta$), then necessarily $a=0$ and $\lambda=2$.
\end{example}

\begin{example} Statistical Ricci solitons without a contact structure arise already in dimensions two and three; we recall the following examples from the literature.
\begin{enumerate}[label=(\alph*),itemsep=0pt]
\item On the hyperbolic half-plane $(H^2,g=y^{-2}(dx^2+dy^2))$ the connection $\nabla_{\partial_x}\partial_x=2y^{-1}\partial_y$, $\nabla_{\partial_y}\partial_y=y^{-1}\partial_y$, $\nabla_{\partial_x}\partial_y=\nabla_{\partial_y}\partial_x=0$ \cite{Furuhata2009} defines a statistical structure of constant curvature $0$; it is Ricci-flat, hence a \emph{steady} soliton, $\lambda=0$.
\item On $(\R^2,dx^2+dy^2)$, the connection with $\nabla_{\partial_x}\partial_x=-\partial_y$, $\nabla_{\partial_y}\partial_y=0$, $\nabla_{\partial_x}\partial_y=\nabla_{\partial_y}\partial_x=\partial_x$ and its conjugate (with $\partial_x$ replaced by $-\partial_x$ in the mixed terms) \cite{MurathanSahin2018} give an Einstein statistical manifold (scalar curvature $-2$), a \emph{shrinking} soliton.
\item On $\R^3$ with an orthonormal frame $v_1,v_2,v_3$ and $\nabla_{v_1}v_1=bv_1$, $\nabla_{v_2}v_2=\nabla_{v_3}v_3=\frac b2v_1$, $\nabla_{v_1}v_2=\nabla_{v_2}v_1=\frac b2v_2$, $\nabla_{v_1}v_3=\nabla_{v_3}v_1=\frac b2v_3$, $\nabla_{v_2}v_3=\nabla_{v_3}v_2=0$ \cite{AytimurOzgur2018} one obtains an Einstein statistical manifold of constant curvature $b^2/4$ and scalar curvature $3b^2/2$, an \emph{expanding} soliton.
\end{enumerate}
\end{example}

\begin{example}[Warped products] We have:

(a) The statistical warped product $\R\times_{e^z}\R^n$ with the structure of \cite{MurathanSahin2018} has $$\tg=dz^2+e^{2z}(dx_1^2+\dots+dx_n^2);$$
it is the hyperbolic space $\mathbb H^{n+1}(-1)$ and, with respect to the Levi-Civita connection, $\R\times_{e^z}\R^{n-1}$ admits an isometric minimal immersion into $\mathbb H^{n+1}(-1)$.

(b) Euclidean space $E^{n+1}\setminus\{0\}$ is the warped product $(0,\infty)\times_tS^n(1)$ with $\tg=dt^2+t^2g_{S^n}$. For a minimal submanifold $N\subset S^n(1)$, the cone $C(N)$ has warped product metric $dt^2+t^2g_N$, and any open set of $C(N)$ admits an isometric minimal immersion into the statistical manifold $E^{n+1}$ of curvature $0$. 

Both examples satisfy the hypotheses of Theorems~\ref{thm:Chen} and \ref{thm:Chen2}.
\end{example}

\section{Concluding Remarks and Open Problems}\label{sec:conclusion}
We have presented a unified account of statistical solitons and almost quasi-Yamabe solitons on statistical warped products and on submanifolds of Kenmotsu statistical manifolds. The following themes emerge.
\begin{itemize}[itemsep=1pt]
\item The soliton equation for the pair $(\nabla,\nabla^*)$ is much more rigid than for a single connection: for instance, for the Reeb field with $\xi$ tangent, the two equations are compatible only when the difference tensor satisfies $\mathcal T(\xi,\xi)=0$ (Theorem~\ref{thm:sign}).
\item Gradient solitons turn the Codazzi condition for Hessian and Ricci metrics into curvature conditions on $\grad f$ (radial curvature), and the $1$-form connection $\nabla^{\theta}$ is always statistical and, for $\theta=df$, equiaffine, which yields volume formulas on compact manifolds and two-sided bounds for $|\Ric|$ (Section~\ref{sec:gradient}).
\item Concircular fields translate the soliton equation into extrinsic data of the submanifold, through $A_{\vn}$ (Theorem~\ref{thm:Ricconc}); total umbilicity forces the Einstein character of $N$.
\item For almost quasi-Yamabe solitons, the symmetric shape operator $A^0_{\vn}$ governs the scalar curvature identities (Theorems~\ref{thm:aqy}--\ref{thm:aqymin}).
\item Chen-type inequalities link the scalar curvature, the squared mean curvatures $\|H\|^2,\|H^*\|^2$ and the Laplacian of the warping function, and thereby give obstructions to the existence of statistical warped product submanifolds.
\end{itemize}

\paragraph{Open problems.}
\begin{enumerate}[label=(\arabic*),itemsep=1pt]
\item Classify statistical solitons on $\R\times_fN_2$ and $N_1\times_f\R$ retaining all mixed components of the soliton equation (Remark~\ref{rem:fullwarp}).
\item Determine all $\eta$-Einstein submanifolds of Kenmotsu statistical manifolds that carry a statistical soliton with potential field $\xi$, and construct non-trivial examples with $\beta=0$.
\item Find global conditions under which a totally umbilical statistical soliton with a concircular field is isometric to a sphere (cf.\ Obata's theorem).
\item Extend the theory to $\eta$-Ricci--Yamabe solitons and to other contact statistical structures (Sasakian, cosymplectic, Kenmotsu-like statistical manifolds), and study statistical solitons in the \emph{gradient} case.
\item Relate statistical solitons on the manifold of a parametric family $\overline{\mathbb B}$ to properties of the Fisher information metric of concrete statistical models.
\end{enumerate}


\end{document}